\newtheorem{mainthm}{Theorem}
\newtheorem{maincor}[mainthm]{Corollary}
\newtheorem{thm}{Theorem}[subsection]
\newtheorem{lma}[thm]{Lemma}
\newtheorem{cor}[thm]{Corollary}
\newtheorem{prp}[thm]{Proposition}
\theoremstyle{remark}
\newtheorem{rmk}[thm]{Remark}
\theoremstyle{definition}
\newtheorem{dfn}[thm]{Definition}
\newtheorem{exm}[thm]{Example}
\newcommand{\pin}{{Pin}_-(2,\CC)}
\newcommand{\CC}{\mathbf{C}}
\newcommand{\PP}{\mathbf{P}}
\newcommand{\RR}{\mathbf{R}}
\newcommand{\ZZ}{\mathbf{Z}}
\newcommand{\OP}{\operatorname}
\newcommand{\Sym}{\OP{Sym}}
\newcommand{\SL}{SL(2,\CC)}
\newcommand{\GL}{GL}
\newcommand{\SU}{SU}
\newcommand{\cp}[1]{\CC\PP^{#1}}
\newcommand{\rp}[1]{\RR\PP^{#1}}
\newcommand{\dbar}{\overline{\partial}}
\newcommand{\mM}{\mathcal{M}}
\newcommand{\ev}{\OP{ev}}
\newcommand{\kk}{\mathbf{F}}
\title{Floer cohomology of the Chiang Lagrangian}
\author[Jonathan David Evans and Yank{\i} Lekili]{Jonathan David Evans\\ Yank{\i} Lekili}
\address{University College London}
\address{King's College London}
\begin{document}

\maketitle
\begin{abstract}

We study holomorphic discs with boundary on a Lagrangian submanifold $L$ in a
K\"{a}hler manifold admitting a Hamiltonian action of a group $K$ which has $L$
as an orbit. We prove various transversality and classification results for
such discs which we then apply to the case of a particular Lagrangian in
$\CC\PP^3$ first noticed by Chiang \cite{Chiang}. We prove that this Lagrangian
has non-vanishing Floer cohomology if and only if the coefficient ring has
characteristic 5, in which case it generates the
split-closed derived Fukaya category as a triangulated category.

{\bf Mathematics Subject Classification (2010)}. 53D12, 53D37, 53D40.

{\bf Keywords.} Floer cohomology, homogeneous Lagrangian submanifolds.

\end{abstract}

\section{Introduction}

\subsection{Homogeneous Lagrangian submanifold}

In recent years there has been much interest in the symplectic geometry of toric manifolds and in the Lagrangian Floer theory of toric fibres \cite{fooo-toric1,fooo-toric2}. These toric fibres are the simplest {\em homogeneous Lagrangian submanifolds}:

\begin{dfn}
Let $X$ be a smooth complex projective variety, $L\subset X$ be a Lagrangian submanifold (with respect to the restriction of the Fubini-Study form) and $K$ be a compact connected Lie group. Let $G$ be the complexification of $K$. Assume that:
\begin{itemize}
\item $G$ acts algebraically on $X$,
\item the restriction of this action to $K$ is a Hamiltonian action,
\item $L$ is a $K$-orbit.
\end{itemize}
We say that $(X,L)$ is $K$-homogeneous.
\end{dfn}

It is natural to wonder if the Lagrangian Floer theory of $K$-homogeneous Lagrangians displays the same richness as that of toric fibres when the group $K$ is allowed to be nonabelian. In this paper we make some inroads into the theory.

In the first half of the paper we prove transversality and classification results for holomorphic discs with boundary on a $K$-homogeneous Lagrangian. In particular we show that all holomorphic discs are regular and that the stabiliser of a Maslov 2 disc is one dimension larger than the stabiliser of a point.

\subsection{The Chiang Lagrangian}

We then introduce a family of four examples of monotone $SU(2)$-homogeneous
Lagrangians in quasihomogeneous Fano 3-folds of $SL(2,\CC)$. These 3-folds
arise by taking the closure of the $SL(2,\CC)$-orbit of a point configuration
$C$ in $\cp{1}$ where the configuration is one of: $C=\Delta$, an equilateral
triangle on the equator; $C=T$, the vertices of a tetrahedron; $C=O$, the
vertices of an octahedron; $C=I$ the vertices of an icosahedron. In each case
the Lagrangian is the $SU(2)$-orbit of the configuration $C$.

The first of these examples is the {\em Chiang Lagrangian} $L_{\Delta}\subset\cp{3}$, described in \cite{Chiang}. Topologically, $L_{\Delta}$ is the quotient of $SU(2)$ by the binary dihedral subgroup of order twelve. In particular it is a rational homology sphere with first homology $H_1(L_{\Delta};\ZZ)=\ZZ/4$. It is a monotone Lagrangian submanifold with minimal Maslov number 2.

We study the Chiang Lagrangian in detail using methods inspired by Hitchin's paper \cite{hitchin} on Poncelet polygons. We expect that the methods we employ should generalise to the examples associated to higher Platonic solids, just as Hitchin's do \cite{hitch2,hitch3}, but we defer their study for future work.

Our main results on the Chiang Lagrangian can be summarised as follows. Recall that Floer cohomology is a $\ZZ/2$-graded vector space which can be equipped with the structures of a $\ZZ/2$-graded ring or a $\ZZ/2$-graded $A_{\infty}$-algebra (by applying homological perturbation to the cochain-level $A_{\infty}$-algebra). Recall also that the count of Maslov 2 holomorphic discs with boundary on $L_{\Delta}$ passing through a fixed point $x\in L_{\Delta}$ is denoted $\mathfrak{m}_0(L_{\Delta})$.

\begin{mainthm}
Let $L_{\Delta}\subset\cp{3}$ denote the Chiang Lagrangian. Equip $L_{\Delta}$ with an orientation and a spin structure.
\begin{enumerate}
\item[(a)] (Lemma 6.1.2) We have $\mathfrak{m}_0(L_{\Delta})=\pm 3$. 
\item[(b)] (Corollary 7.2.5) Let $\kk$ be a field of characteristic 5. Equip $L_{\Delta}$ with a $(\ZZ/5)^\times$-local system $\zeta$. Its Floer cohomology is well-defined and
\[HF^0((L_{\Delta},\zeta),(L_{\Delta},\zeta);\kk)\cong HF^1((L_{\Delta},\zeta),(L_{\Delta},\zeta);\kk)\cong\kk.  \]
\item[(c)] (Theorem 8.2.2) The Floer cohomology ring is a Clifford algebra
\[HF^*((L_{\Delta},\zeta),(L_{\Delta},\zeta);\kk)\cong\kk[x]/(x^2-\zeta^3)\]
where $x$ has degree 1. 
\item[(d)] (Theorem 8.2.2) As an $A_{\infty}$-algebra, $HF^*((L_{\Delta},\zeta),(L_{\Delta},\zeta);\kk)$ is formal. 
\item[(e)] (Corollary 10.0.3) Moreover the four Lagrangian branes obtained by equipping the Chiang Lagrangian with the four possible $(\ZZ/5)^{\times}$-local systems generate the Fukaya category of $\cp{3}$ over $\kk$. 
\item[(f)] (Corollary 7.2.5) Over a field $\mathbf{K}$ of characteristic $p\neq 5$ we have \[ HF^*(L_{\Delta},L_{\Delta};\mathbf{K})=0 \] 
\end{enumerate}
\end{mainthm}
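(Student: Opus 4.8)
The whole theorem rests on a single geometric input, namely a complete, \emph{signed}, and \emph{homology-decorated} classification of the Maslov 2 holomorphic discs with boundary on $L_\Delta$, so the plan is to extract this from the homogeneity and then feed it into standard monotone Floer machinery. For part (a) I would identify $\cp{3}=\PP(\Sym^3\CC^2)$ with the space of binary cubics, so that $\SL$ acts and $L_\Delta$ is the $SU(2)$-orbit of the cubic whose three roots form an equilateral triangle on the equator of $\cp{1}$. By the regularity and stabiliser results of the first half of the paper, every disc is swept out by the $\SL$-action and the Maslov 2 discs are cut out transversally and organised into orbit-type families whose dimension is pinned down by the ``stabiliser one larger than a point'' statement. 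Following Hitchin's Poncelet-polygon description I would write these discs down as explicit algebraic families and count those through a generic $x\in L_\Delta$, with signs determined by the chosen orientation and spin structure, obtaining $\mathfrak{m}_0(L_\Delta)=\pm3$. Crucially I would also record, for each contributing disc, the class of its boundary loop in $H_1(L_\Delta;\ZZ)=\ZZ/4$, since this is exactly the data needed to twist by a local system.

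For parts (b) and (f): since $L_\Delta$ is monotone with minimal Maslov number $2$, self-Floer cohomology is defined for any brane $(L_\Delta,\zeta)$ and is computed by the pearl/Oh spectral sequence whose first page is $H^*(L_\Delta;\mathbf{K})$. For $\mathrm{char}\,\mathbf{K}\neq2$ this is $H^*(S^3;\mathbf{K})$, of total rank $2$, so the only possibilities are $HF=0$ or $HF^0=HF^1=\mathbf{K}$, the latter occurring precisely when the holonomy-weighted spectral-sequence differential pairing the fundamental class with the point class vanishes. I would control this through the eigenvalue criterion: if $HF\neq0$ then the weighted disc count $\mathfrak{m}_0(\zeta)$ must be an eigenvalue of quantum multiplication by $c_1$ on $QH^*(\cp{3};\mathbf{K})=\mathbf{K}[h]/(h^4-1)$. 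Using the boundary classes from the previous step I would compute $\mathfrak{m}_0(\zeta)$ as an explicit function of $\zeta\in\mathrm{Hom}(\ZZ/4,\mathbf{K}^\times)$ and check the arithmetic: over $\mathrm{char}\,\mathbf{K}\neq5$ no admissible $\zeta$ makes $\mathfrak{m}_0(\zeta)$ one of the eigenvalues, forcing $HF=0$ (part (f)); whereas in characteristic $5$ the group $(\ZZ/5)^\times\cong\ZZ/4$ supplies four local systems whose values run over the four now-distinct eigenvalues, forcing $HF\neq0$ and hence $HF^0=HF^1=\mathbf{K}$ (part (b)). The case $\mathrm{char}\,\mathbf{K}=2$ I would treat separately, since there $H^*(L_\Delta;\mathbf{K})$ is larger and one must verify the extra classes also cancel.

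For parts (c)--(e): the ring $HF^*$ is two-dimensional with a degree-$1$ generator $x$, so it suffices to compute $x^2$; this is the Floer product $\mu^2$, again a weighted count of triangle discs supplied by the classification, and I expect the Clifford relation $x^2=\zeta^3$ (part (c)). For formality I would observe that a two-dimensional Clifford algebra with $x$ invertible is intrinsically formal in the relevant range of degrees---the Hochschild obstructions to a nonzero higher product vanish for degree and parity reasons, reinforced by the residual finite symmetry of $L_\Delta$---so the minimal $A_\infty$ model is trivial (part (d)). For generation I would use that, in characteristic $5$, quantum multiplication by $c_1$ splits $QH^*(\cp{3};\mathbf{K})$ into four one-dimensional eigenspaces and hence splits the Fukaya category into four summands; by the eigenvalue computation the four branes $(L_\Delta,\zeta)$ occupy all four summands as nonzero objects, and I would invoke Abouzaid's generation criterion (the open--closed map hits the unit of each summand) to conclude that they split-generate (part (e)).

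Everything downstream of the disc classification is comparatively formal, so I expect the hard part to be the first step: producing the explicit algebraic families of Maslov 2 discs, determining the correct orientation signs and their dependence on the spin structure, and identifying each disc's boundary class in $\ZZ/4$. It is precisely the interaction of these signs and boundary classes that should produce the divisibility by $5$, so this is where the real content---and the main risk of error---lies.
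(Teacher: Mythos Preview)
Your proposal has a genuine gap in parts (b) and (f), and it underestimates the geometric input required.

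\textbf{The Maslov 4 discs are not optional.} You assert that ``the whole theorem rests on a single geometric input, namely a complete \ldots\ classification of the Maslov 2 holomorphic discs.'' This is not how the paper proceeds, and it cannot work. Over a field $\mathbf{K}$ with $\mathrm{char}\,\mathbf{K}\neq 2$ the cohomology $H^*(L_\Delta;\mathbf{K})$ is concentrated in degrees $0$ and $3$, so in the Oh/pearl spectral sequence the only possible nonzero differential is the one taking $H^3$ to $H^0$; this has Morse-degree shift $-3$ and therefore comes from \emph{Maslov 4} discs, not Maslov 2 discs. The paper's hardest section (Section 6.2) is devoted to proving that there are exactly two Maslov 4 discs through the chosen maximum and minimum, via a Hitchin-style projection $\widetilde{\cp{3}}\to\cp{2}$ and a delicate case analysis on Hirzebruch surfaces. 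Without this count you cannot compute the differential and hence cannot decide whether $HF$ vanishes.

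\textbf{The eigenvalue criterion does not prove nonvanishing.} You write that in characteristic $5$ the four local systems ``run over the four now-distinct eigenvalues, forcing $HF\neq 0$.'' Proposition~\ref{prp:c1-eval} only says that $HF\neq 0$ \emph{implies} $\mathfrak{m}_0$ is an eigenvalue; the converse is false in general. So this step is a non sequitur. Moreover, your claim that ``over $\mathrm{char}\,\mathbf{K}\neq 5$ no admissible $\zeta$ makes $\mathfrak{m}_0(\zeta)$ an eigenvalue'' is false in characteristic $7$: there $3^4=81\equiv 4\equiv 256$, so $3$ \emph{is} an eigenvalue of $c_1\star$. The paper handles this by computing the pearl differential explicitly on an eight-critical-point Morse function, obtaining a $4\times 4$ matrix whose determinant is $8Z-3XY$ with $X,Y,Z\in\{\pm1\}$; this lies in $\{\pm 5,\pm 11\}$, and only then does the eigenvalue criterion (ruling out $p=7$, since neither $5$ nor $11$ is divisible by $7$) pin down the value $5$ and simultaneously establish both (b) and (f).

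\textbf{Part (c) is not a direct triangle count.} The paper does not compute $\mu^2$ by enumerating holomorphic triangles. Instead it shows (via Abouzaid's criterion) that both $(L_\Delta,\zeta)$ and $(T_{Cl},2\zeta)$ split-generate the same summand, computes $HH^\bullet$ of the Clifford algebra $HF^*(T_{Cl},T_{Cl})$ to be one-dimensional, deduces formality (your argument for (d) is close to this), and then identifies $HF^*((T_{Cl},2\zeta),(L_\Delta,\zeta))$ with the spin representation by a rank bound coming from the geometric fact that $T_{Cl}\cap L_\Delta$ is a pair of circles. The constant $\zeta^3$ drops out of the endomorphism algebra of the spin module, not from a disc count.
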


The theorem is proved by an explicit computation. We use the Biran-Cornea pearl complex to compute the Floer cohomology: we write down a Morse function (and use the standard complex structure) and enumerate all the pearly trajectories that contribute to the Floer differential.

\begin{rmk}
The assumption on the characteristic of $\kk$ is a little unusual but seems less surprising if we argue as follows. Floer cohomology can only be non-vanishing if $\mathfrak{m}_0(L_{\Delta})$ is an eigenvalue of the quantum multiplication map
\[c_1(\cp{3})\star\colon QH^*(\cp{3})\to QH^*(\cp{3}).\]
The characteristic polynomial of this map is $\lambda^4-256$ so we must work over a field of characteristic $p$ where
\[3^4-256=-5^2\times 7\equiv 0\mod p.\]
\end{rmk}

\begin{rmk}
The Floer cohomology of the Clifford torus $T_{Cl}\subset\cp{3}$ is a Clifford algebra, so the Floer cohomology of the pair $(L_{\Delta},T_{Cl})$ (both equipped with suitable $(\ZZ/5)^{\times}$-local systems) is a Clifford module. In Corollary \ref{pair} we identify this with the four-dimensional spin representation which implies (see Corollary \ref{cor:summand}) that $L_{\Delta}$ is an idempotent summand of the Clifford torus in the Fukaya category.
\end{rmk}
\begin{rmk} The ring structure on $HF((L_{\Delta},\zeta),(L_{\Delta},\zeta);\kk)$ is determined indirectly by a Ho\-ch\-schild cohomology computation, inspired by \cite{Smith}, and by identifying the Clifford module structure as in the previous remark. Note that when $\kk=\ZZ/5$ there are two distinct isomorphism classes of nondegenerate Clifford algebra $\kk[x]/(x^2-\zeta^3)$ according to whether $\zeta^3$ is a square modulo 5; to see this, note that for $\ZZ/2$-grading reasons the algebra must have the form $x^2=b$ for some $0\neq b\in\kk$ and under a change of coordinates $x\mapsto \tilde{x}=ax$ this becomes $\tilde{x}^2=a^2b$ so $b$ is determined up to multiplication by a square. For more on Clifford algebras in arbitrary characteristic see \cite{Chev}.\end{rmk}

\begin{rmk}
Note that we have an additive isomorphism $HF^*(L_{\Delta},L_{\Delta};\kk)\cong H^*(L_{\Delta};\kk)$ when the grading on cohomology is collapsed to a $\ZZ/2$-grading. We use the Biran-Cornea pearl complex to compute $HF^*$ so the Floer cochains are the critical points of a Morse function. The Floer differential is quite nontrivial (see Lemma \ref{lma:floer-cpx}). For example, classically (over $\kk$) the cohomology is generated by the minimum and the maximum of the Morse function, but the Floer cochain corresponding to the maximum of our chosen Morse function is not even coclosed; to find a Floer cocycle representing the nontrivial class of odd degree one must take a combination of index 1 and index 3 critical points.
\end{rmk}

The results above imply immediately that:

\begin{maincor}
The Chiang Lagrangian is not displaceable from itself or from the Clifford torus via Hamiltonian isotopies.
\end{maincor}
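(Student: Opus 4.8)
The plan is to invoke the standard displaceability obstruction in Lagrangian Floer theory: a pair of Lagrangian branes with well-defined, nonvanishing Floer cohomology cannot have its underlying Lagrangians disjoined by a Hamiltonian isotopy. The first step is to recall the invariance of Floer cohomology under Hamiltonian isotopy, namely $HF(L_0,L_1)\cong HF(L_0,\phi(L_1))$ for any Hamiltonian diffeomorphism $\phi$. When $\phi(L_1)$ is disjoint from $L_0$, the pearl complex has no generators (there are no intersection points), so the group vanishes; crucially, this vanishing is insensitive to the local systems carried by the branes, since the underlying chain groups are already empty. Hence Hamiltonian displaceability of the underlying Lagrangian forces the Floer cohomology of \emph{every} brane structure on it to vanish.

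For non-displaceability from itself, I would apply this to $L_0=L_1=L_{\Delta}$. By part (b) of the main theorem, over a field $\kk$ of characteristic $5$ equipped with a suitable $(\ZZ/5)^{\times}$-local system $\zeta$, one has $HF^0((L_{\Delta},\zeta),(L_{\Delta},\zeta);\kk)\cong HF^1((L_{\Delta},\zeta),(L_{\Delta},\zeta);\kk)\cong\kk$, which is nonzero. Were $L_{\Delta}$ Hamiltonian-displaceable from itself, the invariance-and-vanishing argument above would force this group to vanish, a contradiction. For non-displaceability from the Clifford torus, the same argument applies with $L_0=L_{\Delta}$ and $L_1=T_{Cl}$: the second remark identifies the Floer cohomology of the pair (each equipped with an appropriate $(\ZZ/5)^{\times}$-local system) with the four-dimensional spin representation of the relevant Clifford algebra, which is a nonzero module, so again displaceability would contradict nonvanishing.

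I do not expect any serious obstacle here, since the corollary is a formal consequence of the nonvanishing results already established. The only point requiring care is to confirm that the Floer-theoretic package used—monotone Floer cohomology with local systems, its invariance under Hamiltonian isotopy, and its vanishing for disjoint Lagrangians—is available in precisely the setting at hand. Because $L_{\Delta}$ and $T_{Cl}$ are monotone with minimal Maslov number $2$ and the local systems are chosen so that the obstruction values $\mathfrak{m}_0$ agree, the Floer cohomology is well-defined and the invariance theorem applies verbatim, completing the argument.
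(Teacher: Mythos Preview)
Your proposal is correct and matches the paper's approach: the paper simply states that the corollary follows immediately from the preceding nonvanishing results, and you have spelled out precisely the standard invariance-and-vanishing argument that justifies this. One minor terminological point: for the pair $(L_{\Delta},T_{Cl})$ the relevant cochain complex is generated by intersection points rather than by critical points of a Morse function (the pearl model is for self-Floer cohomology), but this does not affect the argument.
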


\begin{rmk}
Note that $L_{\Delta}$ and $\RR\PP^3$ intersect along a pair of circles in their standard positions and it is an interesting open question if they can be displaced from one another. Standard techniques in Floer theory cannot answer this question because $HF(L_{\Delta},\RR\PP^3)$ is not well-defined: Floer cohomology can only be defined for Lagrangians with the same $\mathfrak{m}_0$-value and $\mathfrak{m}_0(\RR\PP^3)=0$ as $\RR\PP^3$ has minimal Maslov 4.
\end{rmk}

\subsection{Acknowledgements}

J.E. would like to thank Jason Lotay for pointing out to him Hitchin's papers on Platonic solids. Both authors would like to thank Ed Segal for helpful discussions on Clifford modules. Y.L. is supported by a Royal Society Fellowship. Figure \ref{fig:prism} was produced using Fritz Obermeyer's software \href{http://www.jenn3d.org}{Jenn3d}.

\part{Holomorphic discs on homogeneous Lagrangian submanifolds}

\section{Riemann-Hilbert problems}\label{sct-riemann-hilbert}

Let $D=\{z\in\CC\ :\ |z|\leq 1\}$ denote the unit disc, $\partial D$ its boundary and $D^o=D\setminus\partial D$.

\subsection{Riemann-Hilbert problems in Lagrangian Floer theory}

\begin{dfn}
A {\em Riemann-Hilbert pair} consists of a holomorphic rank $n$ vector bundle $E\to D$ over the disc with an analytic totally real $n$-dimensional subbundle $F\subset E|_{\partial D}$.
\end{dfn}

\begin{rmk}
Theorem 3.3.13 in \cite{KatzLiu} shows that whenever one has a smooth complex vector bundle $E\to D$, holomorphic over the interior $D\setminus\partial D$, and a smooth totally real $n$-dimensional subbundle $F\subset E|_{\partial D}$ one can extend the holomorphic structure to the whole of $E$ to make it into a Riemann-Hilbert pair.
\end{rmk}

Given a Riemann-Hilbert pair $(E,F)$ and a number $p>2$, let $L^p_1(E,F)$ denote the $L^p_1$-Sobolev completion of the space of smooth sections with totally real boundary conditions and let $L^p(\Lambda^{0,1}\otimes E)$ denote the $L^p$-completion of the space of smooth $(0,1)$-forms with values in $E$. The holomorphic structure gives a Cauchy-Riemann operator
\[\dbar\colon L^p_1(E,F)\to L^p(\Lambda^{0,1}\otimes E)\]
which takes a smooth section $\sigma$ to $\dbar\sigma=\frac{1}{2}\left(d\sigma+J\circ d\sigma \circ i\right)$.

\begin{rmk}
This is a Fredholm operator. The kernel $\ker\dbar$ consists of holomorphic sections $\sigma$, $\dbar\sigma=0$, with totally real boundary conditions.
\end{rmk}

Riemann-Hilbert pairs arise in the following way in Lagrangian Floer theory.

\begin{dfn}
Let $(X,J)$ be a complex $n$-manifold, $L\subset X$ a smooth totally real $n$-dimensional submanifold and $u\colon (D,\partial D)\to (X,L)$ a $J$-holomorphic disc with boundary on $L$. We get a holomorphic vector bundle $u^*TX$ over $D$ and a smooth totally real subbundle $F=u|_{\partial D}^*TL\subset E|_{\partial D}$.
\end{dfn}

The importance of this Riemann-Hilbert pair is that the associated Cauchy-Riemann operator is the linearisation at $u$ of the holomorphic curve equation $\dbar_Ju=0$. If the cokernel of the Cauchy-Riemann operator vanishes then $\ker\dbar$ is the tangent space to the space of parametrised $J$-holomorphic discs at $u$.

\subsection{Oh's splitting theorem}

A holomorphic vector bundle over the disc is trivial, so there exists a smooth bundle trivialisation $\Psi\colon E\to D\times\CC^n$ holomorphic over $E|_{D^o}$. Under this trivialisation each space $F_z$, $z\in\partial D$, is identified with a totally real subspace of $\CC^n$.

The group $\GL(n,\CC)$ acts transitively on $n$-dimensional totally real subspaces with stabiliser $\GL(n,\RR)$, so $F$ defines a loop $\gamma\colon\partial D\to\GL(n,\CC)/\GL(n,\RR)$ by $\gamma(z)=\Psi(F_z)$. The fundamental group of this homogeneous space is $\ZZ$ and the winding number of our loop is called the Maslov number, $\mu(F)$, of the boundary condition $F$. Note that $\GL(n,\RR)$ has two components and the loop lifts to a loop in $\GL(n,\CC)$ if and only if $\mu(F)\equiv 0\mod 2$; however, we can always lift to a multivalued loop of matrices. Using a special form of Birkhoff factorisation proved by Globevnik {\cite[Lemma 5.1]{Glob}}, building on work of Vekua \cite{Vek},  Oh \cite{Oh} proved that we can find a holomorphic trivialisation $\Psi'$ for which the totally real boundary condition looks particularly simple.

\begin{thm}[{\cite[Theorem 1]{Oh}}]\label{thm-oh-splitting}
If $\gamma\colon\partial D\to\GL(n,\CC)/\GL(n,\RR)$ is a smooth loop of totally real subspaces then
\[\gamma(z)=\Theta(z)\Lambda^{1/2}(z)\cdot\RR^n,\ z\in\partial D,\]
where $\Theta\colon\partial D\to\GL(n,\CC)$ extends to a smooth map $D\to\GL(n,\CC)$ holomorphic on $D^o$ and
\[\Lambda(z)=\left(\begin{array}{ccc}z^{\kappa_1} & & 0\\
& \ddots & \\
0& &z^{\kappa_n}\end{array}\right)\]
for some integers $\kappa_i$ called the partial indices of $\gamma$. If some $\kappa_i$ is odd then $\Lambda^{1/2}(z)$ becomes double-valued.
\end{thm}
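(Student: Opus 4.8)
The plan is to translate the statement about loops of totally real subspaces into a matrix Riemann--Hilbert (Birkhoff) factorization problem, to which the results of Vekua and Globevnik cited just above apply. A totally real subspace $V\subset\CC^n$ is the fixed locus of a unique $\CC$-antilinear involution $\rho_V$, and if $V=A\cdot\RR^n$ for some $A\in\GL(n,\CC)$ then $\rho_V(w)=A\overline{A^{-1}w}=A\overline{A}^{-1}\bar w$. First I would record the loop $\gamma$ by the smooth map $Q\colon\partial D\to\GL(n,\CC)$, $Q(z)=A(z)\overline{A(z)}^{-1}$, giving the $\CC$-linear part of $\rho_{F_z}$; here $A(z)$ is any (a priori only locally defined) smooth frame with $F_z=A(z)\RR^n$, and $Q$ is globally well defined and single valued because right multiplication of $A$ by $\GL(n,\RR)$ leaves $Q$ unchanged. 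The involution condition $\rho_{F_z}^2=\mathrm{id}$ translates into the symmetry $Q(z)\overline{Q(z)}=I$ for all $z\in\partial D$.

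Next I would observe what factorization of $Q$ we are really after. Suppose for a moment we had $\gamma(z)=\Theta(z)\Lambda^{1/2}(z)\cdot\RR^n$ as in the statement, with $\Theta$ holomorphic on $D^o$ and $\Lambda=\mathrm{diag}(z^{\kappa_1},\dots,z^{\kappa_n})$. Then $A=\Theta\Lambda^{1/2}$ and, using $\bar z=z^{-1}$ on $\partial D$ so that $\overline{z^{\kappa_i/2}}=z^{-\kappa_i/2}$, a direct computation gives
\[
Q(z)=\Theta(z)\,\Lambda^{1/2}(z)\,\overline{\Lambda^{1/2}(z)}^{-1}\,\overline{\Theta(z)}^{-1}=\Theta(z)\,\Lambda(z)\,\overline{\Theta(z)}^{-1}.
\]
Conversely such a factorization of $Q$ yields the desired form of $\gamma$. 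The point is that $\overline{\Theta(z)}^{-1}$ is the boundary value of $\overline{\Theta(1/\bar z)}^{-1}$, which is holomorphic outside $D$; so $Q=\Theta\,\Lambda\,\overline{\Theta}^{-1}$ is exactly a Birkhoff factorization of the loop $Q$ into (holomorphic inside)$\cdot$(diagonal of monomials)$\cdot$(holomorphic outside), with the extra feature that the inner and outer factors are conjugate. Thus the problem is to produce a \emph{symmetric} Birkhoff factorization compatible with $Q\overline Q=I$, and the integers $\kappa_i$ are forced to be the partial (Birkhoff) indices of $Q$.

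I would then invoke the special Birkhoff factorization of Globevnik (Lemma 5.1), resting on Vekua, to factor $Q$; the main work is to arrange it in the symmetric shape above. Concretely, starting from any Birkhoff factorization $Q=\Theta_+\Lambda\Theta_-$ one uses $Q\overline Q=I$ to relate $\Theta_-$ to $\overline{\Theta_+}^{-1}$ up to a constant real factor, which can be absorbed; this is where the reality hypothesis on $F$ does its work and is the step I expect to be the genuine obstacle, since one must simultaneously keep $\Theta$ smooth up to the boundary, holomorphic in the interior, and invertible. Finally I would unwind the encoding: set $A(z)=\Theta(z)\Lambda^{1/2}(z)$, check $A(z)\RR^n=F_z$, and note that although $\Lambda^{1/2}$ is double valued precisely when some $\kappa_i$ is odd, the subspace $A(z)\RR^n$ is single valued because flipping the sign of a half-power amounts to right multiplication by a reflection in $\GL(n,\RR)$. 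A winding-number count then identifies the Maslov number $\mu(F)$ with $\sum_i\kappa_i$, consistent with the $\ZZ$-valued invariant described before the theorem.
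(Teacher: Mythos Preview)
The paper does not actually prove this theorem: it is quoted verbatim as \cite[Theorem 1]{Oh}, with the single sentence of context that Oh deduces it from Globevnik's special Birkhoff factorisation \cite[Lemma 5.1]{Glob}, itself building on Vekua. So there is no ``paper's own proof'' to compare against beyond that attribution.

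Your outline is a faithful reconstruction of precisely that route. Encoding the loop of totally real subspaces by the loop $Q(z)=A(z)\overline{A(z)}^{-1}$ satisfying $Q\overline Q=I$, and observing that the desired form $\gamma=\Theta\Lambda^{1/2}\cdot\RR^n$ is equivalent to a \emph{symmetric} Birkhoff factorisation $Q=\Theta\Lambda\overline{\Theta}^{-1}$, is exactly Oh's reduction; Globevnik's Lemma 5.1 is then invoked to supply the factorisation in that symmetric form. Your identification of the delicate point---passing from an arbitrary Birkhoff factorisation $Q=\Theta_+\Lambda\Theta_-$ to one with $\Theta_-=\overline{\Theta_+}^{-1}$ while retaining smoothness up to the boundary and invertibility---is accurate: this is the content of Globevnik's lemma and is where the work lies. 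The closing remarks (well-definedness of $\Lambda^{1/2}\cdot\RR^n$ despite double-valuedness, and $\mu(F)=\sum\kappa_i$) are correct and consistent with how the paper uses the result.
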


The holomorphic trivialisation in question is the composition of $\Psi$ with the fibrewise multiplication by $\Theta(z)^{-1}$. In this trivialisation the totally real boundary condition at $z\in\partial D$ is given by $\Lambda^{1/2}(z)\cdot\RR^n$. In particular, we see that a one-dimensional Riemann-Hilbert pair is completely classified up to isomorphism by its Maslov number and that the Riemann-Hilbert pair $(E,F)$ separates as a direct sum of one-dimensional Riemann-Hilbert pairs $(E_i,F_i)$ whose Maslov numbers $\kappa_i$ are the partial indices of the loop of totally real subspaces given by $F$.

\begin{dfn}
If $(E,F)$ is a Riemann-Hilbert pair which splits as a direct sum $\bigoplus_i(E_i,F_i)$ then we call the $(E_i,F_i)$ the {\em Riemann-Hilbert summands} of $(E,F)$.
\end{dfn}

The following proposition is proved by explicitly solving the $\dbar$-problem for the Riemann-Hilbert pair using Fourier theory with half-integer exponents.

\begin{thm}[{\cite[Propositions 5.1, 5.2, Theorem 5.3]{Oh}}]\label{thm-oh-sections}
Let $(E,F)$ be a one-dimensional Riemann-Hilbert pair and let $\kappa=\mu(F)$ be the Maslov number of $F$. If $\kappa\leq -1$ then
\[\dim\ker\dbar=0,\qquad\dim\OP{coker}\dbar=-\kappa-1.\]
If $\kappa\geq 0$ then
\[\dim\ker\dbar=\kappa+1,\qquad\dim\OP{coker}\dbar=0.\]
In particular the index of $\dbar$ is $\mu(F)+1$. If $(E,F)$ has dimension $n$ then the index of the corresponding $\dbar$-operator is the sum of the indices for its Riemann-Hilbert summands, namely
\[\mu(F)+n.\]
\end{thm}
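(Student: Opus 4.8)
The plan is to reduce everything to an explicit model and then solve the $\dbar$-equation by Fourier analysis. First I would invoke Oh's splitting theorem (Theorem \ref{thm-oh-splitting}): in dimension one it writes the loop of boundary subspaces as $\gamma(z)=\Theta(z)z^{\kappa/2}\cdot\RR$ with $\Theta\colon D\to\GL(1,\CC)=\CC^\times$ holomorphic and nowhere zero, so fibrewise multiplication by $\Theta(z)^{-1}$ is an isomorphism of Riemann-Hilbert pairs intertwining the two Cauchy-Riemann operators. Hence it suffices to treat the standard model $E=D\times\CC$ with boundary condition $F_z=z^{\kappa/2}\RR$ for $z\in\partial D$, where $\kappa=\mu(F)$. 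In this model $\ker\dbar$ consists of holomorphic functions $\sigma(z)=\sum_{n\geq 0}a_nz^n$ whose boundary values satisfy $\sigma(z)\in z^{\kappa/2}\RR$, equivalently $z^{-\kappa/2}\sigma(z)\in\RR$ on $|z|=1$.

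Next I would compute the kernel by expanding this boundary condition in Fourier modes. Writing $z=e^{i\theta}$, the requirement that $\sum_{n\geq 0}a_ne^{i(n-\kappa/2)\theta}$ be real forces the Hermitian relations $a_n=\overline{a_{\kappa-n}}$ together with the vanishing $a_n=0$ for $n>\kappa$. When $\kappa\geq 0$ these relations pair up $a_0,\dots,a_{\kappa}$ and leave exactly $\kappa+1$ free real parameters (the middle coefficient, when $\kappa$ is even, being constrained to be real), so $\dim\ker\dbar=\kappa+1$; when $\kappa\leq -1$ the vanishing condition kills every coefficient and $\ker\dbar=0$. The case of even $\kappa$ is ordinary integer Fourier analysis; the case of odd $\kappa$ forces the half-integer exponents $e^{i(n-\kappa/2)\theta}$, which is precisely where the double-valuedness of $z^{\kappa/2}$ enters and where one must install the correct Sobolev completion (equivalently, pass to the orientation double cover $w\mapsto w^2$). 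This half-integer bookkeeping, resting on Globevnik's refinement of Birkhoff factorisation, is the delicate part of the setup, but the combinatorics of the pairing is identical and yields the same count.

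It then remains to pin down the cokernel, and here the decisive point in complex dimension one is automatic regularity: I would show that $\dbar$ is surjective whenever $\kappa\geq 0$. The cleanest route is to identify $\operatorname{coker}\dbar$ with the kernel of the formal $L^2$-adjoint, which is again a Cauchy-Riemann operator for a one-dimensional Riemann-Hilbert pair; a reflection/duality computation shows its Maslov number is $-\kappa-2$. Feeding this into the kernel computation above gives $\dim\operatorname{coker}\dbar=\max(-\kappa-1,0)$, which vanishes for $\kappa\geq 0$ and equals $-\kappa-1$ for $\kappa\leq -1$. (Alternatively one solves the inhomogeneous equation $\dbar\sigma=\eta$ directly, taking the Cauchy transform of $\eta$ for a particular solution and correcting by a holomorphic function to meet the boundary condition mode by mode; the obstruction to correcting is exactly the $-\kappa-1$ Fourier modes that cannot be hit.) Combining the two computations gives $\operatorname{index}\dbar=\dim\ker\dbar-\dim\operatorname{coker}\dbar=\kappa+1$ in every case.

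Finally, the $n$-dimensional statement is immediate from additivity. Oh's splitting theorem realises $(E,F)$ as a direct sum of one-dimensional summands $(E_i,F_i)$ with partial indices $\kappa_i$; the Maslov number is additive, $\mu(F)=\sum_i\kappa_i$, and both $\ker\dbar$ and $\operatorname{coker}\dbar$ decompose as direct sums over the summands, so the total index is $\sum_i(\kappa_i+1)=\mu(F)+n$. I expect the main obstacle to be the careful handling of the odd-$\kappa$ case: choosing function spaces in which the half-integer Fourier analysis is rigorous, and verifying that the adjoint really is the Cauchy-Riemann operator of a Riemann-Hilbert pair with Maslov number $-\kappa-2$; everything else is bookkeeping once the standard model is in place.
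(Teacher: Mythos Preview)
Your proposal is correct and follows essentially the same approach the paper indicates: the paper does not give its own proof but cites Oh and remarks that the result ``is proved by explicitly solving the $\dbar$-problem for the Riemann-Hilbert pair using Fourier theory with half-integer exponents,'' which is precisely your reduction to the model $F_z=z^{\kappa/2}\RR$ followed by the Fourier-mode pairing $a_n=\overline{a_{\kappa-n}}$. Your handling of the cokernel via the adjoint pair of Maslov number $-\kappa-2$ (or alternatively via the obstruction modes of the inhomogeneous equation) and the additivity argument for the $n$-dimensional index are the standard routes and are consistent with Oh's original argument.
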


\begin{rmk}\label{rmk-oh-zeros}
Suppose that $(E,F)$ is a one-dimensional Riemann-Hilbert pair with Maslov number $\kappa=\mu(F)$.
\begin{itemize}
\item If $\kappa$ is odd then any global section must vanish at some point in $\partial D$ because the total space of the totally real boundary condition is a M\"{o}bius strip in that case.
\item If there is a nowhere-vanishing global section then $\kappa=0$; conversely if $\kappa=0$ then any global section is either nowhere-vanishing or identically zero.
\end{itemize}
\end{rmk}

\subsection{Regularity}

\begin{dfn}
A Riemann-Hilbert pair is called regular if $\OP{coker}\dbar=0$.
\end{dfn}

It follows from Oh's theorems above that a Riemann-Hilbert pair is regular if and only if all of its partial indices $\kappa_i$ satisfy $\kappa_i\geq -1$. In general it is not easy to control these partial indices for the Riemann-Hilbert pairs arising in Lagrangian Floer theory. In the cases we are studying we will use the presence of symmetry to prove that the Riemann-Hilbert pair satisfies the following criterion, which in turn implies that the partial indices are all nonnegative.

\begin{dfn}
A Riemann-Hilbert pair $(E,F)$ is generated by global sections at a point of the boundary if there is a point $z\in\partial D$ such that the evaluation map $\OP{ev}_z\colon\ker\dbar\to F_z$, which sends $\sigma$ to $\sigma(z)$, is surjective.
\end{dfn}

A Riemann-Hilbert pair splits into its Riemann-Hilbert summands $(E_i,F_i)$ and the evaluation map becomes block-diagonal $\OP{ev}_z\colon\bigoplus_i\ker\dbar_{(E_i,F_i)}\to \bigoplus_i(F_i)_z$. In particular, if $(E,F)$ is generated by global sections at $z\in\partial D$ then the same is true of its Riemann-Hilbert summands.

\begin{lma}\label{lma-globally-generated}
If $(E,F)$ is generated by global sections at a point of the boundary then its partial indices are all nonnegative. In particular, $(E,F)$ is regular.
\end{lma}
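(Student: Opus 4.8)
The plan is to reduce everything to the one-dimensional case via Oh's splitting theorem and then read off the answer from the dimension count of Theorem \ref{thm-oh-sections}. First I would apply Theorem \ref{thm-oh-splitting} to write $(E,F)=\bigoplus_i(E_i,F_i)$ as a direct sum of one-dimensional Riemann-Hilbert summands whose Maslov numbers $\kappa_i$ are precisely the partial indices of the loop of totally real subspaces determined by $F$. As observed just above the statement, the evaluation map is block-diagonal with respect to this decomposition, so surjectivity of $\OP{ev}_z\colon\ker\dbar\to F_z$ forces surjectivity of each block $\OP{ev}_z\colon\ker\dbar_{(E_i,F_i)}\to(F_i)_z$. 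Hence it suffices to prove the claim when $(E,F)$ is one-dimensional.

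For a one-dimensional pair I would argue by contradiction. Suppose $\kappa=\mu(F)\leq -1$. Then Theorem \ref{thm-oh-sections} gives $\dim\ker\dbar=0$, so the only global holomorphic section with totally real boundary conditions is the zero section. Consequently $\OP{ev}_z$ has image $\{0\}$, which cannot equal the nonzero (real one-dimensional) fibre $F_z$; thus $\OP{ev}_z$ is not surjective, contradicting the hypothesis. Therefore $\kappa\geq 0$ for each summand, and every partial index of $(E,F)$ is nonnegative.

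Finally, since all partial indices are nonnegative they in particular satisfy $\kappa_i\geq -1$, which is exactly the criterion derived from Theorem \ref{thm-oh-sections} for regularity; hence $\OP{coker}\dbar=0$ and $(E,F)$ is regular.

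I expect no serious obstacle here: all of the analytic content is packaged into Oh's theorems, and the remaining argument is essentially bookkeeping with the block-diagonal evaluation map. The only point requiring a little care is the one-dimensional base case, namely verifying that the vanishing of $\ker\dbar$ genuinely obstructs surjectivity onto $F_z$; but this is immediate once one recalls that $F_z$ is a nonzero totally real subspace, so the zero map cannot hit it.
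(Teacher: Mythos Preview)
Your argument is correct and is essentially the same as the paper's: reduce to the one-dimensional summands via the block-diagonal evaluation map, then invoke Theorem~\ref{thm-oh-sections} to see that a summand with negative partial index has $\ker\dbar=0$ and hence cannot be generated by global sections. The paper phrases the one-dimensional step as ``they admit global sections, so by Theorem~\ref{thm-oh-sections} the Maslov number is nonnegative,'' which is exactly your contrapositive.
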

\begin{proof}
Since the Riemann-Hilbert summands are generated by global sections at $z\in\partial D$ they admit global sections. By Theorem \ref{thm-oh-sections}, the only one-dimensional Riemann-Hilbert pairs with global sections are those with nonnegative Maslov number.
\end{proof}

When studying transversality of evaluation maps in Lagrangian Floer theory we will need the following result:

\begin{lma}\label{lma-transverse-twopoint}
Fix a pair of distinct points $z_1,z_2\in\partial D$. If $(E,F)$ is an $n$-dimensional Riemann-Hilbert pair with $\mu(F)=n$ whose partial indices are $\kappa_1=1,\ldots,\kappa_n=1$ then the evaluation map
\[\OP{ev}_{z_1,z_2}\colon\ker\dbar\to F_{z_1}\oplus F_{z_2}\]
sending $\sigma$ to $(\sigma(z_1),\sigma(z_2))$ is surjective.
\end{lma}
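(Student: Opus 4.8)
The plan is to reduce to the one-dimensional case and then exploit the rigidity of Maslov $1$ sections. First I would invoke Oh's splitting theorem (Theorem \ref{thm-oh-splitting}): since all the partial indices equal $1$, the pair $(E,F)$ splits as a direct sum of $n$ one-dimensional Riemann-Hilbert summands $(E_i,F_i)$, each of Maslov number $\kappa_i=1$. Under this splitting $\ker\dbar=\bigoplus_i\ker\dbar_{(E_i,F_i)}$ and $F_{z_j}=\bigoplus_i(F_i)_{z_j}$, so the evaluation map $\OP{ev}_{z_1,z_2}$ becomes block-diagonal with blocks $\ker\dbar_{(E_i,F_i)}\to(F_i)_{z_1}\oplus(F_i)_{z_2}$. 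Hence it is enough to prove surjectivity for a single one-dimensional pair with $\kappa=1$.

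For such a pair, Theorem \ref{thm-oh-sections} gives $\dim_{\RR}\ker\dbar=\kappa+1=2$, while the target $F_{z_1}\oplus F_{z_2}$ is a sum of two real lines and so is also $2$-dimensional over $\RR$. Thus surjectivity of $\OP{ev}_{z_1,z_2}$ is \emph{equivalent} to injectivity, i.e. to showing that a holomorphic section $\sigma$ satisfying the totally real boundary condition and vanishing at both $z_1$ and $z_2$ must vanish identically. Note that the argument must be carried out over $\RR$: the target lines are only real one-dimensional, so one cannot appeal to complex-linear algebra and must genuinely track real dimensions.

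To establish injectivity I would pass to Oh's normal form, trivialising by the holomorphic bundle map $\Theta$ of Theorem \ref{thm-oh-splitting} so that the boundary condition becomes $F_z=z^{1/2}\cdot\RR$; since $\Theta$ is an invertible holomorphic map it neither creates nor destroys zeros, so the vanishing behaviour is intrinsic. Solving the $\dbar$-problem with half-integer Fourier exponents, exactly as in the proof of Theorem \ref{thm-oh-sections}, identifies $\ker\dbar$ with the two-real-dimensional space of sections $\sigma(z)=a+\bar a z$, $a\in\CC$ (one checks $e^{-i\theta/2}\sigma(e^{i\theta})=2\OP{Re}(e^{-i\theta/2}a)\in\RR$, confirming the boundary condition). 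If such a $\sigma$ vanishes at two distinct boundary points then subtracting $a+\bar a z_1=0$ from $a+\bar a z_2=0$ gives $\bar a(z_1-z_2)=0$, whence $\bar a=0$ and $a=0$, as required.

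The step I expect to be the main obstacle is precisely this injectivity claim, and I would frame its conceptual content as a zero count: a nonzero Maslov $1$ section has exactly one zero on $\overline{D}$, a single simple zero lying on $\partial D$ (consistent with Remark \ref{rmk-oh-zeros}, which already forces odd-index sections to vanish somewhere on the boundary). Morally, a nonzero section carries a total zero number $\kappa/2=1/2$, with interior zeros weighted $1$ and boundary zeros weighted $\tfrac12$, leaving room for no more than one boundary zero; hence it cannot vanish at two distinct points. The explicit solution above is the most direct way to make this rigorous, but the argument-principle count against the boundary condition is the reason the lemma is true, and either route closes the proof once the dimension bookkeeping of the first two paragraphs is in place.
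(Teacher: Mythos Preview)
Your proof is correct and follows essentially the same approach as the paper: reduce to a single Maslov~$1$ summand via Oh's splitting, then use the explicit description of sections $\sigma(z)=cz+\bar c$ (equivalently your $a+\bar a z$) to control their zeros. The only cosmetic difference is that the paper argues surjectivity directly by exhibiting sections $\sigma_i$ vanishing precisely at $z_i$, whereas you pass through the dimension count to reduce to injectivity; given $\dim\ker\dbar=2=\dim(F_{z_1}\oplus F_{z_2})$ these are equivalent and the underlying content is identical.
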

\begin{proof}
It suffices to prove surjectivity for a single Riemann-Hilbert summand so we assume $n=1$. We work with Oh's trivialisation so that the boundary condition is given by
\[F=z^{1/2}\cdot\RR^n.\]
Oh {\cite[Section 5, Case II]{Oh}} proves that the only global sections are of the form $cz+\bar{c}$. If $c\neq 0$, these sections have a single zero at $-\bar{c}/c\in\partial D$. In particular there exist sections $\sigma_1$ and $\sigma_2$ such that $\sigma_i$ vanishes precisely at $z_i$ for $i=1,2$. The images of these sections under $\OP{ev}_{z_1,z_2}$ span $F_{z_1}\oplus F_{z_2}$.
\end{proof}

\section{Holomorphic discs with symmetry}

\subsection{Overview}

In this section we will study the Riemann-Hilbert pairs associated to holomorphic discs on homogeneous Lagrangians and find several applications of the theory from Section \ref{sct-riemann-hilbert} to Lagrangian Floer theory. In this section $X$ is a smooth complex variety of dimension $n$ with complex structure $J$ and $L\subset X$ is a totally real submanifold. We assume that $X$ admits an action of a compact connected Lie group $K$ which extends to an algebraic action of the complexification $G$ and for which $L$ is a $K$-orbit. We continue to use the name $K$-homogeneous for this slightly weaker set of assumptions: the symplectic structure and Lagrangian conditions are not important in this section. All holomorphic discs are assumed to be non-constant.

It will be convenient to make the following definition.

\begin{dfn}\label{dfn:half-maslov}
If $(X,L)$ is $K$-homogeneous, a {\em half-Maslov divisor} is a $G$-invariant divisor $Y\subset X\setminus L$ such that the Maslov index of a Riemann surface $u$ with boundary on $L$ equals $2[u]\cdot [Y]$.
\end{dfn}

\subsection{Moduli spaces of $J$-holomorphic discs}

Let $\dbar_Ju=0$ denote the nonlinear Cauchy-Riemann equation whose solutions are $J$-holomorphic maps
\[u\colon(D,\partial D)\to(X,L).\]
Fix a relative homology class $0\neq\beta\in H_2(X,L;\ZZ)$. We define the moduli spaces
\begin{align*}\mM^*_{0,k}(J,\beta)=\left\{(u,z_1,\ldots,z_k)\ \right.&:\  u\colon(D,\partial D)\to(X,L),\ u\ \mbox{somewhere injective}\\
&\ \ \ \ \left. [u]=\beta,\ \dbar_Ju=0,\ z_i\in\partial D,\ z_i\neq z_j\right\}/\sim\end{align*}
where $\sim$ is the relation
\[(u,z_1,\ldots,z_k)\sim (u\circ\phi^{-1},\phi(z_1),\ldots,\phi(z_k))\]
for some $\phi\in PSL(2,\RR)$, the holomorphic automorphism group of the disc. Note that $PSL(2,\RR)$ acts freely on the space of somewhere injective discs.

If the Riemann-Hilbert pair associated to $u$ is regular then the moduli space is a smooth manifold in a neighbourhood of $u$ and its tangent space is
\[T_{[u,z_1,\ldots,z_k]}\mM^*_{0,k}(J,\beta)=\left(\ker\dbar\oplus T_{z_1}\partial D\oplus\cdots\oplus T_{z_k}\partial D\right)/\mathfrak{psl}(2,\RR)\]
where $\mathfrak{psl}(2,\RR)$ denotes the infinitesimal action of automorphisms. This has dimension $\mu+n+k-3$.

\subsection{Symmetry implies regularity}

\begin{lma}\label{lma-reg}
If $(X,L)$ is $K$-homogeneous and $u\colon(D,\partial D)\to(X,L)$ is a $J$-holomorphic disc then the associated Riemann-Hilbert pair is generated by global sections, and hence regular. As a consequence, if $(X,L)$ is $K$-homogeneous then all moduli spaces $\mM^*_{0,k}(J,\beta)$ of $J$-holomorphic discs with boundary on $L$ are smooth manifolds.
\end{lma}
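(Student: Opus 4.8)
The plan is to use the $K$-symmetry to manufacture, by hand, enough elements of $\ker\dbar$ to show that the Riemann-Hilbert pair $(E,F)=(u^*TX,u|_{\partial D}^*TL)$ is generated by global sections at every boundary point, and then to invoke Lemma \ref{lma-globally-generated} to conclude regularity. The source of these sections is the infinitesimal action of $K$: since $G$ acts algebraically, and hence holomorphically, every $\xi$ in the Lie algebra $\mathfrak{k}$ of $K$ generates a one-parameter family of biholomorphisms of $X$, and because $L$ is a $K$-orbit this family preserves $L$.

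Concretely, first I would fix $\xi\in\mathfrak{k}$ and let $\phi_t$ denote the action of $\exp(t\xi)\in K$ on $X$. Each $\phi_t$ is a biholomorphism preserving $L$, so $\phi_t\circ u$ is again a $J$-holomorphic disc with boundary on $L$. Differentiating the path $t\mapsto\phi_t\circ u$ at $t=0$ produces a smooth section $\sigma_\xi$ of $u^*TX$, given explicitly by $\sigma_\xi(z)=\tilde X_\xi(u(z))$, where $\tilde X_\xi$ is the fundamental vector field of $\xi$. Since $\dbar_J(\phi_t\circ u)=0$ for all $t$, differentiating in $t$ identifies the linearisation $D_u\dbar_J(\sigma_\xi)$ with $\dbar\sigma_\xi$ and shows it vanishes; and since $\phi_t(L)=L$, the restriction $\sigma_\xi|_{\partial D}$ is tangent to $L$, i.e. $\sigma_\xi\in\ker\dbar$ with the totally real boundary condition $F$. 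Thus $\xi\mapsto\sigma_\xi$ defines a linear map $\mathfrak{k}\to\ker\dbar$.

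Next I would establish surjectivity of evaluation. Fix $z\in\partial D$ and set $p=u(z)\in L$; then $\ev_z(\sigma_\xi)=\tilde X_\xi(p)\in T_pL=F_z$. Because $L$ is the single $K$-orbit through $p$, the orbit map $K\to L$, $k\mapsto k\cdot p$, is a submersion at the identity, so its derivative $\mathfrak{k}\to T_pL$, $\xi\mapsto\tilde X_\xi(p)$, is surjective. Hence $\{\sigma_\xi(z):\xi\in\mathfrak{k}\}$ spans $F_z$ and $\ev_z\colon\ker\dbar\to F_z$ is onto, so $(E,F)$ is generated by global sections at $z$. Lemma \ref{lma-globally-generated} then gives that the partial indices are nonnegative and $(E,F)$ is regular. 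Since this holds for the pair associated to every $J$-holomorphic disc $u$ with boundary on $L$, the discussion preceding the lemma shows that each $\mM^*_{0,k}(J,\beta)$ is a smooth manifold in a neighbourhood of every such $u$, and hence a smooth manifold.

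The step I expect to be the crux — and the reason the argument must use the compact group $K$ rather than its complexification $G$ — is the boundary condition. The fundamental vector fields $\tilde X_{i\xi}=J\tilde X_\xi$ coming from $i\mathfrak{k}\subset\mathfrak{g}$ are tangent to the $G$-orbit but not to $L$, so only the real directions $\mathfrak{k}$ produce sections respecting $F$. The content of the proof is therefore the observation that these real directions already suffice to fill out all of $T_pL$, which is precisely the statement that $L$ is a $K$-orbit. Everything else — holomorphy of the $K$-action, smoothness of $u$ up to the boundary, and the identification of the linearised operator with $\dbar$ — is standard and requires only routine verification.
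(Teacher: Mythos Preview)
Your proposal is correct and follows essentially the same argument as the paper: elements of $\mathfrak{k}$ give holomorphic vector fields on $X$ tangent to $L$, hence sections in $\ker\dbar$, and transitivity of the $K$-action on $L$ makes the evaluation map $\mathfrak{k}\to T_{u(z)}L$ surjective, so Lemma~\ref{lma-globally-generated} applies. The paper's proof is terser but the content is identical; your extra remarks on why one must use $\mathfrak{k}$ rather than $\mathfrak{g}$ are sound but not needed for the argument.
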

\begin{proof}
Each element of the Lie algebra $\mathfrak{k}$ of $K$ defines a holomorphic vector field on $X$ which is tangent to $L$ along $L$, in particular there is a map $\mathfrak{k}\to\ker\dbar$ where $\dbar$ is the Cauchy-Riemann operator for the Riemann-Hilbert pair associated to $u$. For any point $z\in\partial D$ there is a surjective map $\mathfrak{k}\to T_{u(z)}L$ coming from the evaluation of these holomorphic vector fields at the point $u(z)$. Therefore the Riemann-Hilbert pair is generated by global sections at $z$, so by Lemma \ref{lma-globally-generated} it is regular.
\end{proof}

\begin{cor}\label{cor:nomuleq1}
If $(X,L)$ is $K$-homogeneous, $Y\subset X\setminus L$ is a half-Maslov divisor and $u\colon(D,\partial D)\to (X,L)$ is a non-constant $J$-holomorphic disc then $u$ has Maslov index greater than or equal to 2.
\end{cor}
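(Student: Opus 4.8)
The plan is to combine the regularity result of Lemma~\ref{lma-reg} with the topological constraint that the Maslov index is even for discs bounding a half-Maslov divisor. Since $Y$ is $G$-invariant and $u$ is non-constant, the image $u(D)$ is not contained in $Y$ (the interior of $D$ would otherwise map into the invariant divisor $Y \subset X \setminus L$, forcing the boundary into $Y$ as well, contradicting that $u(\partial D) \subset L$ is disjoint from $Y$). Hence the intersection number $[u]\cdot[Y]$ counts finitely many interior intersection points, each contributing a positive local multiplicity by positivity of intersections between a holomorphic curve and a complex divisor. Therefore $[u]\cdot[Y] \geq 0$, giving $\mu(u) = 2[u]\cdot[Y] \geq 0$, and in particular $\mu(u)$ is even.

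It remains to rule out $\mu(u) = 0$. First I would invoke Lemma~\ref{lma-reg}: the Riemann-Hilbert pair $(E,F) = (u^*TX, u|_{\partial D}^* TL)$ is generated by global sections, so by Lemma~\ref{lma-globally-generated} all its partial indices $\kappa_i$ are nonnegative. By Theorem~\ref{thm-oh-splitting} the total Maslov number satisfies $\mu(F) = \sum_i \kappa_i$, and this equals the Maslov index $\mu(u)$ of the disc. If $\mu(u) = 0$, then $\sum_i \kappa_i = 0$ with all $\kappa_i \geq 0$ forces every $\kappa_i = 0$. By Remark~\ref{rmk-oh-zeros}, a one-dimensional summand with $\kappa_i = 0$ has global sections that are either nowhere-vanishing or identically zero, and in fact every partial index being $0$ means the Riemann-Hilbert pair is the trivial one $(D \times \CC^n, \partial D \times \RR^n)$.

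The key observation is then that a Maslov~$0$ holomorphic disc must be constant. One way to see this: the tangent vector field $\dot u = du(\partial_r)$ along any radial direction, or more robustly the differential of $u$ viewed through the infinitesimal $PSL(2,\RR)$-reparametrisation and the evaluation of $\mathfrak{k}$-vector fields, produces global sections of $(E,F)$; since $u$ is non-constant, $du$ is nonzero somewhere in the interior. But with all partial indices equal to zero, every global section of $(E,F)$ is a constant vector in Oh's trivialisation, hence is determined by its value at a single interior point and can only vanish by vanishing identically. The real point is simpler: a Maslov~$0$ disc has vanishing symplectic area (monotonicity, or directly $[u]\cdot[Y]=0$ means $u$ meets the ample divisor $Y$ nowhere and so has zero area against the K\"ahler form whose cohomology class is a positive multiple of $[Y]$), and a holomorphic disc of zero area is constant.

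The main obstacle I anticipate is making the final step fully rigorous in a way consistent with the paper's setup, namely translating ``$\mu(u)=0$'' into ``zero symplectic area'' without circularity. The cleanest route is to exploit that $Y$ is a half-Maslov divisor disjoint from $L$: because the cohomology class Poincar\'e dual to $Y$ can be taken to be a positive multiple of the K\"ahler form (or at least to pair positively with the area of any non-constant disc), $\mu(u) = 2[u]\cdot[Y] = 0$ together with positivity of intersections forces $u(D^o) \cap Y = \emptyset$ and hence zero area, contradicting non-constancy. I would verify that the definition of half-Maslov divisor, combined with monotonicity in the intended application, indeed guarantees this positivity; this is the one place where the bare totally-real hypotheses of this section may need supplementing by the K\"ahler/monotone structure, so I would state the area-positivity input explicitly as the crux of the argument.
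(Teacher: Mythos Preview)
Your nonnegativity and evenness argument is fine; the issue, as you yourself suspect, is ruling out $\mu(u)=0$. The area-based route cannot work here: at the start of Section~3 the paper explicitly drops the symplectic hypothesis (``the symplectic structure and Lagrangian conditions are not important in this section''), so there is no K\"ahler class to invoke and no monotonicity to fall back on. Your partial-index route is closer but, as written, produces no contradiction: a nowhere-vanishing section is perfectly consistent with all $\kappa_i=0$, and you have not exhibited a section that is forced to vanish somewhere without vanishing identically. The missing ingredient is the infinitesimal rotation $V\in\mathfrak{psl}(2,\RR)$, which vanishes at the centre of the disc; then $du(V)$ is a holomorphic section of $(E,F)$ vanishing at $z=0$, and if $u$ is non-constant it is nonzero wherever $du$ is. With that one extra observation your first approach does go through.

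The paper takes a different route that avoids both area and the finer structure of the Riemann--Hilbert splitting. It invokes Lazzarini's theorem to extract a somewhere-injective disc $u'$ of Maslov index at most one, then uses regularity (Lemma~\ref{lma-reg}) to conclude that $\mathcal{M}^*_{0,1}(J,[u'])$ is a smooth manifold of dimension $\mu(u')+n-2<n$. But the one-point evaluation map to $L$ is $K$-equivariant and $K$ acts transitively on the $n$-dimensional orbit $L$, so a nonempty moduli space must have dimension at least $n$. This dimension contradiction is the structural use of homogeneity that your argument never exploits, and it works in the bare totally-real setting without any appeal to symplectic area.
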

\begin{proof}
Any holomorphic disc has nonnegative Maslov index by positivity of intersections with the half-Maslov divisor. If $u$ is a holomorphic disc with Maslov index zero or one then by Lazzarini's theorem {\cite[Theorem A]{Laz}} there exists a somewhere injective disc, $u'$, with boundary on $L$ and Maslov index zero or one (Lazzarini provides a decomposition of $u$ into somewhere injective Riemann surfaces with boundary on $L$ and all of these contribute nonnegatively to the Maslov index by positivity of intersections with $Y$).

The moduli space $\mM^*_{0,1}(J,[u'])$ of somewhere injective $J$-discs in the same relative homology class as $u'$ having one boundary marked point is $(n-2)$-dimensional by Lemma \ref{lma-reg}. The evaluation map $\mM^*_{0,1}(J,[u'])\to L$ is $K$-equivariant, $L$ is $n$-dimensional and the $K$-action on $L$ is transitive. So if the moduli space is nonempty, it must have dimension at least $n$. This contradicts the fact that it is $(n-2)$-dimensional.
\end{proof}

Note that Lazzarini's theorem also implies that Maslov 2 discs are somewhere injective: otherwise one could extract a somewhere injective disc in the Lazzarini decomposition with strictly lower Maslov index, which cannot exist by Corollary \ref{cor:nomuleq1}. For this reason we will drop the star from the notation $\mM^*_{0,k}$ when dealing with Maslov 2 moduli spaces.

\subsection{Axial discs}

We are particularly interested in holomorphic discs which have extra symmetries. An axial disc is, roughly speaking, a disc with a one-parameter group of ambient isometries which preserve the disc setwise and rotate it about its centre.

\begin{dfn}
Suppose $(X,L)$ is $K$-homogeneous and $K_x$ is the stabiliser of $x\in L$. An {\em $x$-admissible homomorphism} is a homomorphism $R\colon\RR\to K$ such that $R(2\pi)\in K_x$. We say that $R$ is primitive if $R(\theta)\not\in K_x$ for all $\theta\in(0,2\pi)$.
\end{dfn}

\begin{dfn}
Let $R$ be an $x$-admissible homomorphism. A holomorphic disc $u\colon(D,\partial D)\to (X,L)$ with $u(1)=x$ is {\em $R$-axial} if (after a suitable reparametrisation) $u(e^{i\theta}z)=R(\theta)u(z)$ for all $z\in D$, $\theta\in\RR$. We say $u$ is {\em axial} without further qualification if there exists some reparametrisation and admissible homomorphism $R$ for which it is $R$-axial.
\end{dfn}

\begin{rmk}
An $R$-axial disc is simple if and only if $R$ is primitive.
\end{rmk}

\begin{lma}
Suppose that $(X,L)$ is $K$-homogeneous. Recall that the complexification $G$ of $K$ acts by holomorphic automorphisms on $X$. Given a point $x\in L$ and an $x$-admissible homomorphism $R\colon \RR\to K$ there is an $R$-axial disc $u_R\colon (D,\partial D)\to(X,L)$ with $u_R(1)=x$.
\end{lma}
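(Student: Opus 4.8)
The plan is to build $u_R$ by complexifying the flow of $R$ and then passing to a quotient. Write $\xi\in\mathfrak{k}$ for the infinitesimal generator of $R$, so that $R(\theta)=\exp(\theta\xi)$. Since $G$ is the complexification of $K$, its Lie algebra is $\mathfrak{g}=\mathfrak{k}\otimes\CC$ and the exponential $\exp\colon\mathfrak{g}\to G$ of the complex Lie group $G$ is holomorphic; thus $\rho\colon\CC\to G$, $\rho(s)=\exp(s\xi)$, is a holomorphic one-parameter subgroup extending $R$ (we recover $R(\theta)=\rho(\theta)$ when $s=\theta$ is real). Because the $G$-action on $X$ is algebraic, the orbit map $G\to X$, $g\mapsto g\cdot x$, is a morphism and in particular holomorphic, so I obtain a holomorphic map $\tilde u\colon\CC\to X$, $\tilde u(s)=\rho(s)\cdot x$.

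Next I would transfer $\tilde u$ to the disc using the covering $s\mapsto z=e^{is}$, which identifies the upper half-plane $\{s\in\CC:\OP{Im}(s)>0\}$ with the punctured open disc $D^o\setminus\{0\}$ and the real axis with $\partial D$, the deck group being generated by $s\mapsto s+2\pi$. The crucial point is that admissibility, $R(2\pi)\in K_x$, makes $\tilde u$ invariant under this deck transformation:
\[\tilde u(s+2\pi)=\exp\bigl((s+2\pi)\xi\bigr)\cdot x=\rho(s)\bigl(R(2\pi)\cdot x\bigr)=\rho(s)\cdot x=\tilde u(s),\]
where I used that the one-parameter group factors since $\xi$ commutes with itself. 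Hence $\tilde u$ descends to a holomorphic $u_R\colon D^o\setminus\{0\}\to X$, which extends real-analytically across $\partial D$ (as $\tilde u$ is defined and smooth across $\OP{Im}(s)=0$). On the boundary $s=\theta\in\RR$ it reads $u_R(e^{i\theta})=R(\theta)\cdot x\in L$, so the totally real boundary condition holds and $u_R(1)=x$, while the equivariance $u_R(e^{i\theta}z)=R(\theta)u_R(z)$ is immediate from $\tilde u(s+\theta)=\rho(\theta)\tilde u(s)$.

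It remains to extend $u_R$ holomorphically across the puncture $z=0$, and this is the step I expect to be the main obstacle. I would fix a $G$-equivariant projective embedding $X\hookrightarrow\PP^N$ (coming from a $G$-linearised very ample line bundle), so that $\rho$ becomes a one-parameter subgroup of $\GL(N+1,\CC)$ generated by a matrix $A$ whose eigenvalues are purely imaginary, as $\mathfrak{k}$ acts by skew-Hermitian endomorphisms with respect to a $K$-invariant Hermitian metric. Expanding a lift of $x$ in an eigenbasis of $A$, the condition $R(2\pi)\in K_x$ forces the weights occurring in $x$ to be congruent modulo $\ZZ$, so after a projective rescaling each homogeneous coordinate of $u_R(z)$ is an integer power $z^{n_j}$ times a constant. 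Thus near the origin $u_R$ is given by monomials and extends to a morphism defined at $z=0$, whose value lies in $X$ since $X$ is closed. Alternatively one can avoid coordinates and invoke the removable-singularity theorem: $u_R$ maps into the compact Kähler manifold $X$ and, being asymptotically constant as $|z|\to 0$ by the monomial description, has finite energy, so the singularity is removable. Either way $u_R$ extends to the desired holomorphic disc $(D,\partial D)\to(X,L)$ with $u_R(1)=x$, which is $R$-axial by construction, completing the proof.
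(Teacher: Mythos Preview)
Your proof is correct and follows the same outline as the paper: complexify $R$ to a holomorphic one-parameter subgroup $\rho\colon\CC\to G$, use the admissibility condition $R(2\pi)\in K_x$ to make the orbit map $s\mapsto\rho(s)\cdot x$ descend to the punctured disc, and then extend over the origin. The only difference is in this last step. The paper observes that the orbit map $\CC^*\to X$ is \emph{algebraic} (since the $G$-action is), so its Zariski closure is a rational curve and the map extends holomorphically over the punctures by a one-line appeal to a standard fact about algebraic group actions. You instead choose a $G$-equivariant projective embedding, diagonalise the action of $\xi$, and use admissibility to see that the occurring weights differ by integers, so that in homogeneous coordinates the map is monomial and visibly extends. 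Your argument is more hands-on and self-contained, at the cost of invoking the existence of a $G$-linearised very ample line bundle; the paper's is slicker but leans on a reference. Both are expressing the same phenomenon, namely that a one-parameter subgroup orbit in a projective $G$-variety compactifies to a $\PP^1$. Your alternative finite-energy argument is a little circular as written, since you deduce finite energy from the monomial description which already gives the extension directly; but it does no harm.
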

\begin{proof}
Let $R^{\CC}\colon\CC\to G$ be the complexification of the admissible homomorphism (constructed by complexifying the Lie algebra homomorphism). The map
\[u(e^{a+i\theta})=R^{\CC}(a+i\theta)x,\qquad a\leq 0\]
defines an algebraic map $\CC^*\to X$ and the Zariski closure $\overline{u(\CC^*)}_Z$ of $u(\CC^*)$ is a rational curve containing $u(\CC^*)$ as a Zariski open subset (see {\cite[Proposition 15.2.1]{Taylor}}); thus $\overline{u(\CC^*)}_{Z}\setminus u(\CC^*)$ is a finite set of points and $u(\CC^*)$ is dense in $\overline{u(\CC^*)}_Z$ in the analytic topology. In particular, $u$ extends holomorphically over the punctures. The restriction of $u$ to the unit disc then gives a holomorphic disc $u_R$ with boundary on $L$.
\end{proof}

\subsection{Applications to Maslov 2 discs}

We will show that any Maslov 2 disc is axial.

\begin{lma}
If $(X,L)$ is $K$-homogeneous and $\beta$ is a relative homology class with Maslov number 2 then the evaluation map
\[\ev\colon\mM_{0,1}(J,\beta)\to L,\qquad\ev([u,z])=u(z)\]
is a local diffeomorphism when the moduli space is nonempty. The group $K$ acts transitively on components of $\mM_{0,1}(J,\beta)$.
\end{lma}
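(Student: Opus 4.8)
The plan is to exploit the coincidence of dimensions. By the formula $\mu+n+k-3$ for $\dim\mM_{0,k}(J,\beta)$, a Maslov $2$ class with one boundary marked point gives $\dim\mM_{0,1}(J,\beta)=2+n+1-3=n=\dim L$, so $\ev$ is a map between manifolds of equal dimension and is a local diffeomorphism exactly where its derivative is surjective. Everything therefore reduces to surjectivity of $D\ev$ at each point $[u,z]$.

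I would represent a tangent vector at $[u,z]$ by a pair $(\xi,v)\in\ker\dbar\oplus T_z\partial D$ modulo the infinitesimal $\mathfrak{psl}(2,\RR)$-action, so that
\[ D\ev([\xi,v])=\xi(z)+du_z(v)\in T_{u(z)}L, \]
each term lying in $T_{u(z)}L=F_z$ because $\xi$ obeys the totally real boundary condition and $u(\partial D)\subset L$. Since $\ev(u\circ\phi^{-1},\phi(z))=u(z)$ for $\phi\in PSL(2,\RR)$, the map $\ev$ is constant on reparametrisation orbits, so $D\ev$ annihilates the $\mathfrak{psl}(2,\RR)$-directions and is well defined on the quotient. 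The essential input is Lemma \ref{lma-reg}: the Riemann-Hilbert pair of $u$ is generated by global sections at $z$, which is precisely the statement that $\xi\mapsto\xi(z)$ already surjects $\ker\dbar$ onto $F_z$. Hence $D\ev$ is surjective even with $v=0$, and by the dimension count it is an isomorphism; this proves $\ev$ is a local diffeomorphism.

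For the transitivity clause I would first observe that $K$ is connected and the action $k\cdot[u,z]=[k\circ u,z]$ is continuous, so $K$ preserves each connected component; the content is therefore that each component is a single $K$-orbit. To prove this it suffices to show every orbit is open. The map $\ev$ is $K$-equivariant, so the orbit map $\alpha\colon K\to\mM_{0,1}(J,\beta)$, $\alpha(k)=k\cdot[u,z]$, satisfies $\ev\circ\alpha(k)=k\cdot u(z)$, which is the orbit map of the transitive $K$-action on $L$. The latter has surjective differential $\mathfrak{k}\to T_{u(z)}L$; composing with the isomorphism $(D\ev)^{-1}$ shows that $d\alpha\colon\mathfrak{k}\to T_{[u,z]}\mM_{0,1}(J,\beta)$ is surjective, so the orbit through $[u,z]$ is open. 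A partition of a manifold into open orbits makes each orbit closed as well, so a connected component contains exactly one orbit.

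The only point demanding genuine care is that passing to the $\mathfrak{psl}(2,\RR)$-quotient could in principle destroy surjectivity of the evaluation derivative. This is resolved cleanly by the observation that reparametrisations fix the marked image point, so the surjectivity furnished by global generation descends intact; the remainder is bookkeeping with the index formula and with $K$-equivariance.
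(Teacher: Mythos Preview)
Your argument is correct, and your treatment of the transitivity clause is essentially the paper's: both show that the $K$-orbit through $[u,z]$ is open (equivalently, $n$-dimensional) by combining $K$-equivariance of $\ev$ with transitivity of $K$ on $L$, then conclude that orbits coincide with components.

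Where you differ is in the proof that $\ev$ is a local diffeomorphism. You argue \emph{directly}: the proof of Lemma~\ref{lma-reg} shows that for every $z\in\partial D$ the map $\ker\dbar\to F_z$, $\xi\mapsto\xi(z)$, is surjective (the infinitesimal $K$-action already hits all of $T_{u(z)}L$), so $D\ev$ is onto, and by the dimension count an isomorphism. The paper instead argues \emph{by contradiction via Sard}: if $\ev$ had a critical point, $K$-equivariance would make every point of $L$ a critical value, violating Sard's theorem. Your route is more explicit and extracts slightly more from the machinery already in place---it makes visible exactly where surjectivity of $D\ev$ comes from (the same vector fields that prove regularity). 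The paper's route is shorter and uses only the soft fact of equivariance plus Sard, without revisiting the Riemann--Hilbert analysis. One small caveat: the \emph{statement} of Lemma~\ref{lma-reg} only asserts global generation at \emph{some} boundary point; you are correctly invoking its \emph{proof}, which gives it at every $z\in\partial D$, and it would be worth saying so explicitly.
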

\begin{proof}
Since both spaces have dimension $n$, it suffices to show that the evaluation map has no critical points. The group $K$ acts on $\mM_{0,k}(J,\beta)$; an element $g\in K$ sends $[u,z]$ to $[gu,z]$. The evaluation map is $K$-equivariant and $L$ is a $K$-orbit. Therefore if $x\in L$ is critical, so is $gx$ for any $g\in K$. In particular all points in $L$ are critical, which contradicts Sard's theorem.

This shows that the $K$-orbit of $[u,z]$ is $n$-dimensional, connected and compact. It follows that this $K$-orbit is a component of the $n$-dimensional manifold $\mM_{0,1}(J,\beta)$.
\end{proof}

\begin{lma}\label{lma-rot-sym}
Suppose $(X,L)$ is $K$-homogeneous and write $K_x$ for the stabiliser of a point $x\in L$. Let $u$ be a Maslov 2 holomorphic disc in the class $\beta$. The $K$-stabiliser of $[u]\in\mM_{0,0}(J,\beta)$ has dimension $1+\dim K_x$ and $K$ acts transitively on components of $\mM_{0,0}(J,\beta)$.
\end{lma}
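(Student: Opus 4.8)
The plan is to reduce both assertions to a single computation, namely that the $K$-orbit $O_0:=K\cdot[u]$ inside $\mM_{0,0}(J,\beta)$ has dimension exactly $n-1$, by exploiting the forgetful map $\pi\colon\mM_{0,1}(J,\beta)\to\mM_{0,0}(J,\beta)$. First I would record the three dimension inputs. By the tangent space formula of the previous subsection a class of Maslov index $2$ gives $\dim\mM_{0,0}(J,\beta)=2+n+0-3=n-1$ and $\dim\mM_{0,1}(J,\beta)=2+n+1-3=n$; and since $L=K/K_x$ is an $n$-dimensional $K$-orbit we have $\dim K=n+\dim K_x$.

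Next I would analyse $\pi$. Because Maslov $2$ discs are somewhere injective, $PSL(2,\RR)$ acts freely on them, so the fibre of $\pi$ over $[u]$ is precisely the circle $\{[u,w]:w\in\partial D\}\cong\partial D$ of choices of boundary marked point; thus $\pi$ is a $K$-equivariant map with one-dimensional fibres. Fix a lift $[u,z]$ of $[u]$. By the preceding lemma the orbit $O_1:=K\cdot[u,z]$ is an entire component of $\mM_{0,1}(J,\beta)$, hence a compact connected manifold of dimension $n$. By $K$-equivariance $\pi(O_1)=O_0$. Since the fibres of $\pi$ are at most one-dimensional we get $\dim O_0\geq\dim O_1-1=n-1$, while $O_0\subseteq\mM_{0,0}(J,\beta)$ forces $\dim O_0\leq n-1$; hence $\dim O_0=n-1$.

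With $\dim O_0=n-1$ established, both claims follow. As $K$ is compact the orbit $O_0$ is a compact connected embedded submanifold of top dimension in the $(n-1)$-manifold $\mM_{0,0}(J,\beta)$, so it is open and closed, i.e. a single connected component; applying this to every orbit shows that $K$ acts transitively on components. Finally the orbit--stabiliser relation gives
\[\dim\OP{Stab}_K([u])=\dim K-\dim O_0=(n+\dim K_x)-(n-1)=1+\dim K_x,\]
as required.

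The only genuinely delicate point is the claim that $\pi$ has honest circle fibres: this is exactly where somewhere-injectivity of Maslov $2$ discs (and hence freeness of the $PSL(2,\RR)$-action) is needed, for otherwise the forgetful map could collapse marked points and the lower bound $\dim O_0\geq n-1$ would fail. Conceptually the extra dimension in the stabiliser is the rotational symmetry anticipated by the name of the lemma: an element of $\OP{Stab}_K([u])$ that does not fix $u$ pointwise must reparametrise $u$ by a rotation of $D$, and it is this circle of symmetries that will upgrade $u$ to an axial disc in the next subsection.
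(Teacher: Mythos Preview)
Your proof is correct and follows essentially the same approach as the paper: both arguments use the preceding lemma to identify the $K$-orbit in $\mM_{0,1}(J,\beta)$ as a full $n$-dimensional component, then pass through the $K$-equivariant forgetful map $\mM_{0,1}(J,\beta)\to\mM_{0,0}(J,\beta)$ with one-dimensional fibres to deduce the stabiliser dimension. The paper phrases the step dually (the stabiliser of $[u,z]$ has the same identity component as $K_x$ because $\ev$ is a local diffeomorphism, and then the one-dimensional fibre of the forgetful map contributes one extra dimension to the stabiliser of $[u]$), whereas you compute the orbit dimension $\dim O_0=n-1$ first and then apply orbit--stabiliser; these are two sides of the same coin. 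Your write-up is in fact slightly more explicit than the paper's in spelling out why $K$ acts transitively on components of $\mM_{0,0}(J,\beta)$.
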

\begin{proof}
Since $\beta$ has Maslov number 2, the dimension of the moduli space $\mM_{0,k}(J,\beta)$ is $n+k-1$. We have seen that the evaluation map $\mM_{0,1}(J,\beta)\to L$ is a local diffeomorphism. Since the evaluation map is $K$-equivariant, the identity component of the $K$-stabiliser of $[u,z]\in\mM_{0,1}(J,\beta)$ is equal to the identity component of the $K$-stabiliser $K_x$ of $x=u(z)\in L$. The forgetful map $\mM_{0,1}(J,\beta)\to\mM_{0,0}(J,\beta)$ is $K$-equivariant and the fibre is one-dimensional. The stabiliser of $[u]$ is therefore of dimension $1+\dim K_x$.
\end{proof}

After making some further assumptions, we can classify all Maslov 2 discs with boundary on a $K$-homogeneous Lagr\-an\-g\-ian.

\begin{cor}\label{cor:axial-maslov2}
Suppose that $(X,L)$ is $K$-homogeneous. Suppose moreover that:
\begin{itemize}
\item the action of the complexification $G$ has a Zariski dense open orbit (this is necessarily the orbit containing $L$);
\item the complement of this open orbit is a half-Maslov divisor $Y\subset X$.
\end{itemize}
Then all Maslov 2 discs with boundary on $L$ are axial.
\end{cor}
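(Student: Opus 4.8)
The plan is to exploit the extra dimension in the stabiliser furnished by Lemma \ref{lma-rot-sym}: a Maslov 2 disc $u$ has a $K$-stabiliser $H:=\OP{Stab}_K([u])$ inside $\mM_{0,0}(J,\beta)$ of dimension $1+\dim K_x$, one larger than the stabiliser of a point, and the aim is to show that this extra one-parameter family of ambient symmetries is exactly a rotation of the domain disc. First I would record that $u$ is somewhere injective; this is precisely where the hypotheses enter. Since the complement $Y$ of the dense orbit is a half-Maslov divisor, Corollary \ref{cor:nomuleq1} together with Lazzarini's decomposition rules out discs of Maslov index $0$ or $1$, forcing every Maslov 2 disc to be somewhere injective. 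Consequently $PSL(2,\RR)$ acts freely on $u$, so for each $g\in H$ there is a unique $\phi_g\in PSL(2,\RR)$ with $gu=u\circ\phi_g^{-1}$, and $g\mapsto\phi_g$ defines a homomorphism $\rho\colon H\to PSL(2,\RR)$.

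Next I would analyse the action of the identity component $H^0$ on the circle of marked points. The forgetful map $\mM_{0,1}(J,\beta)\to\mM_{0,0}(J,\beta)$ is $K$-equivariant, and by freeness its fibre over $[u]$ is a copy of $\partial D$ on which $H^0$ acts through $\rho$, by $z\mapsto\rho(g)^{-1}(z)$. The stabiliser of a marked point $[u,z]$ has identity component equal to that of $K_{u(z)}$ (as in the proof of Lemma \ref{lma-rot-sym}), so the $H^0$-orbit of any $z\in\partial D$ has dimension $\dim H^0-\dim K_x=1$. Since $H^0$ is compact and connected, this orbit is open and compact in $\partial D$, hence all of $\partial D$: the action is transitive. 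Therefore $\rho(H^0)$ is a nontrivial compact connected subgroup of $PSL(2,\RR)$ acting transitively on the boundary, so it is a maximal compact subgroup, a circle of rotations about some interior point. After reparametrising $u$ by an element of $PSL(2,\RR)$ (which does not change $[u]$) I may assume this point is the centre, so that $\rho(H^0)$ is the standard rotation group; by $G$-invariance of $Y$ the unique interior intersection point of $u^{-1}(Y)$ is then the centre, consistent with this normalisation.

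Finally I would extract the admissible homomorphism. The surjection $\rho|_{H^0}\colon H^0\to SO(2)$ induces a surjection on Lie algebras, so I can choose $\xi\in\mathfrak{k}$ with $R(\theta)=\exp(\theta\xi)$ mapping under $\rho$ to the rotation by $-\theta$; then $gu=u\circ\phi_g^{-1}$ gives $u(e^{i\theta}z)=R(\theta)u(z)$, so $u$ is $R$-axial after setting $x=u(1)$. Admissibility is automatic, since $R(2\pi)$ lies in $\ker\rho=\{g\in H:gu=u\}$, which fixes every point of $u(D)$ and in particular fixes $x$, so $R(2\pi)\in K_x$.

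I expect the main obstacle to be the second paragraph, namely converting the abstract extra dimension of the stabiliser into an honest rotation of the domain. The crux is the well-definedness of $\rho$, which rests squarely on somewhere-injectivity and hence on the half-Maslov divisor hypothesis, together with the identification of $\rho(H^0)$ with the rotation subgroup via the transitivity-plus-compactness argument on $\partial D$. Once $\rho$ is available and its image is pinned down, the extraction of $R$ and the verification of axiality and admissibility are routine.
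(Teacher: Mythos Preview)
Your argument is correct, and it takes a genuinely different route from the paper's. Both proofs begin with Lemma~\ref{lma-rot-sym} to obtain the one extra dimension in $\OP{Stab}_K([u])$, but they diverge in how they exploit it. The paper extracts a generator $v$ of this extra direction, forms the $u(1)$-admissible homomorphism $R$, and observes that $t\mapsto R(t)u(1)$ simultaneously parametrises $\partial u$ and $\partial u_R$; it then argues that the image of $u$ lies in the axial sphere $u_R\cup u_{R^{-1}}$ and uses positivity of intersections with the half-Maslov divisor $Y$ to show that $u$ can cover only one hemisphere simply, forcing $u=u_R$ up to reparametrisation. Thus the paper uses the hypothesis on $Y$ at the \emph{end}, as an intersection-theoretic constraint ruling out multiple covers of the hemisphere.

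You instead use the hypothesis on $Y$ at the \emph{beginning}, solely to secure somewhere-injectivity and hence the free $PSL(2,\RR)$-action; from that point on your proof is pure group theory. The homomorphism $\rho\colon H\to PSL(2,\RR)$ is well-defined precisely because of this freeness, and the identification of $\rho(H^0)$ with a rotation subgroup via transitivity on $\partial D$ and compactness is clean. Your approach avoids constructing the auxiliary axial disc $u_R$ and the intersection-theory endgame altogether, and it makes explicit the step (somewhat implicit in the paper) that the one-parameter stabiliser really does sweep out the full boundary circle. The paper's approach, on the other hand, gives a more vivid geometric picture: one literally sees $u$ sitting inside an axial sphere and rules out the bad hemisphere by counting intersections with $Y$. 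Both are valid; yours is arguably more self-contained once Lemma~\ref{lma-rot-sym} and Corollary~\ref{cor:nomuleq1} are in hand.
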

\begin{proof}
Let $u\colon(D,\partial D)\to (X,L)$ be a Maslov 2 disc and suppose that $v\in\mathfrak{k}$ is a generator for the stabiliser subgroup of $[u]\in\mathcal{M}_{0,0}(J,\beta)$ guaranteed by Lemma \ref{lma-rot-sym}. Note that the sign of $v$ is determined by the requirement that it points along the boundary of $u$ oriented anticlockwise. Let $T>0$ be the smallest positive real number such that $\exp(Tv)\in K_x$ and let $R$ be the $u(1)$-admissible homomorphism $R(t)=\exp(tTv/2\pi)$. Then $t\mapsto R(t)u(1)$ is a parametrisation of the boundary of $u$ and is also the boundary of the $R$-axial disc $u_R$. This implies that the image of $u$ is contained in the $R$-axial holomorphic sphere $u_R\cup u_{R^{-1}}$. Moreover the image of $u$ must contain the image of $u_R$ since they share a common boundary and $u_R$ is embedded.

The Maslov number of a disc with boundary on $L$ is given by twice its intersection number with $Y$. We have assumed that the complement of the open orbit of $G$ is such an anticanonical divisor $Y$. By Corollary \ref{cor:nomuleq1}, any disc has Maslov number at least 2, so in particular the two axial discs $u_R$ and $u_{R^{-1}}$ intersect $Y$ nontrivially. Since $u$ has Maslov number 2, by positivity of intersections, it can intersect $Y$ at most once transversely at a smooth point. In particular its image in the rational curve $u_R\cup u_{R^{-1}}$ can only cover one hemisphere simply (or else it would intersect $Y$ in two or more points). Since the image of $u$ contains the image of $u_R$, this implies that $u_R=u\circ\phi$ for some reparametrisation $\phi$.
\end{proof}

\subsection{Applications to Maslov 4 discs}

We show that Maslov 4 discs which cleanly intersect a $K$-invariant complex submanifold of complex codimension 2 are necessarily axial.

\begin{cor}\label{cor-maslov4-symmetry}
Suppose that
\begin{itemize}
\item $(X,L)$ is $K$-homogeneous,
\item the complexification $G$ of $K$ has a Zariski open orbit whose complement is a half-Maslov divisor $Y$,
\item $\tilde{X}$ is a $K$-equivariant blow-up of $X$ along a $K$-invariant complex codimension 2 submanifold $Z\subset X$ disjoint from $L$. Let $u$ be a Maslov 4 holomorphic disc on $L$ such that $u(D)$ and $Z$ intersect cleanly in a single point.
\end{itemize}
Then $u$ is axial.
\end{cor}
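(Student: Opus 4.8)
The plan is to pass to the $K$-equivariant blow-up $\pi\colon\tilde X\to X$ along $Z$, lift $u$ to a holomorphic disc $\tilde u$ on $(\tilde X,L)$, show that $\tilde u$ is a \emph{Maslov 2} disc for a suitable half-Maslov divisor on $\tilde X$, and then feed $\tilde u$ into the Maslov 2 classification of Corollary \ref{cor:axial-maslov2}. Since $\pi$ is $K$-equivariant and restricts to an isomorphism near $L$, axiality of $\tilde u$ will descend to axiality of $u=\pi\circ\tilde u$. The lift exists: $u$ is holomorphic and meets $Z$ cleanly in the single interior point $u^{-1}(Z)$ (interior since $Z\cap L=\varnothing$), so by the universal property of the blow-up there is a unique holomorphic lift $\tilde u\colon(D,\partial D)\to(\tilde X,L)$, meeting the exceptional divisor $E$ transversally once, whence $[\tilde u]\cdot[E]=1$.

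Next I would check that $(\tilde X,L)$ again satisfies the hypotheses feeding the Maslov 2 classification. The essential point is that $Z$, although only assumed $K$-invariant, is automatically $G$-invariant: the fundamental vector field of $i\xi\in\mathfrak g$ is $J$ applied to that of $\xi\in\mathfrak k$, and since $Z$ is a complex submanifold its tangent bundle is $J$-invariant, so all of $\mathfrak g$ is tangent to $Z$ and the connected group $G$ preserves $Z$. Hence $G$ acts algebraically on $\tilde X$, the $K$-action is its restriction, and since $Z\cap L=\varnothing$ the Lagrangian $L$ lifts isomorphically to a $K$-orbit in $\tilde X\setminus E$; thus $(\tilde X,L)$ is $K$-homogeneous. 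As $Z$ is a proper $G$-invariant subvariety it must lie in the complement $Y$ of the dense orbit $U$ (a $G$-invariant subvariety meeting the single open orbit would contain it), so $U$ remains the dense orbit of $G$ in $\tilde X$. Finally I would take as half-Maslov divisor on $\tilde X$ the anticanonical class $\tilde Y:=\pi^*Y-E$; since $Z\subset Y$ this is effective, it is $G$-invariant and disjoint from $L$, and the change-of-Maslov-index formula $\tilde\mu(w)=\mu_X(\pi\circ w)-2[w]\cdot[E]$ together with $\mu_X(\pi\circ w)=2[w]\cdot\pi^*Y$ shows $\tilde\mu(w)=2[w]\cdot\tilde Y$, so $\tilde Y$ is indeed half-Maslov for $(\tilde X,L)$.

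With this in hand the Maslov computation is immediate:
\[\tilde\mu(\tilde u)=2[\tilde u]\cdot\tilde Y=2\bigl([\tilde u]\cdot\pi^*Y-[\tilde u]\cdot[E]\bigr)=2\bigl([u]\cdot Y-1\bigr)=2(2-1)=2,\]
using $[u]\cdot Y=\tfrac12\mu(u)=2$ and the projection formula $[\tilde u]\cdot\pi^*Y=[u]\cdot Y$. So $\tilde u$ is a non-constant Maslov 2 disc on the $K$-homogeneous pair $(\tilde X,L)$; by Corollary \ref{cor:nomuleq1} applied to $(\tilde X,L)$ it is somewhere injective, and by Lemma \ref{lma-reg} its moduli problem is regular. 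I would then run the argument of Corollary \ref{cor:axial-maslov2}: the stabiliser of $[\tilde u]$ (of dimension $1+\dim K_x$, by Lemma \ref{lma-rot-sym}) produces an admissible $R$ and an $R$-axial sphere $u_R\cup u_{R^{-1}}$ containing the image of $\tilde u$; each hemisphere has Maslov at least $2$ by Corollary \ref{cor:nomuleq1} while $\tilde u$ meets $\tilde Y$ exactly once, so positivity of intersections forces $\tilde u$ to cover a single hemisphere, i.e.\ $\tilde u=u_R\circ\phi$. Thus $\tilde u$ is axial, and pushing forward by the $K$-equivariant map $\pi$ gives $u(e^{i\theta}z)=R(\theta)u(z)$ with $R$ now $u(1)$-admissible, the stabilisers $K_{\tilde u(1)}$ and $K_{u(1)}$ agreeing because $\pi$ is a $K$-equivariant isomorphism near $L$. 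Hence $u$ is axial.

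The main obstacle is a bookkeeping subtlety rather than a deep one: the complement of the dense orbit in $\tilde X$ is $\pi^{-1}(Y)=\tilde Y\cup E$, which is reducible with class $\pi^*Y=-K_{\tilde X}+[E]$, so it is \emph{not} anticanonical and Corollary \ref{cor:axial-maslov2} cannot be quoted verbatim. One must observe that the proof of that corollary uses only (i) the existence of a dense orbit containing $L$, to build the axial sphere, and (ii) the existence of \emph{some} half-Maslov divisor, to invoke Corollary \ref{cor:nomuleq1} and control intersections with $Y$ --- and both hold for $(\tilde X,L)$ with $\tilde Y=\pi^*Y-E$ playing the role of $Y$. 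The other point needing care is precisely the verification that $Z$ is $G$-invariant, which is what upgrades the equivariant blow-up from a merely $K$-equivariant construction to a genuinely $K$-homogeneous pair.
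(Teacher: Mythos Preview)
Your approach is exactly the paper's: take the proper transform $\tilde u$ in the blow-up, observe it is Maslov $2$ for the $K$-homogeneous pair $(\tilde X,L)$, and invoke the Maslov~$2$ classification (Corollary~\ref{cor:axial-maslov2}) to conclude $\tilde u$, and hence $u=\pi\circ\tilde u$, is axial. The paper's own proof is terser and asserts that ``the total transform of $Y$ is again a half-Maslov divisor'' and that the complement of the open orbit in $\tilde X$ ``is still anticanonical''; you are right to flag that strictly speaking the half-Maslov (anticanonical) class on $\tilde X$ is $\pi^*Y-E$, while the complement of the open orbit is set-theoretically $\pi^{-1}(Y)$ with divisor class $\pi^*Y$, so Corollary~\ref{cor:axial-maslov2} is being applied to a situation where the half-Maslov divisor and the orbit-complement differ by $E$. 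Your final paragraph correctly identifies that the \emph{proof} of that corollary needs only (i) a dense $G$-orbit containing $L$ and (ii) some effective half-Maslov divisor disjoint from $L$, both of which hold with $\tilde Y=\pi^*Y-E$; the key step, that each axial hemisphere has Maslov $\ge 2$ and hence $\tilde u$ can cover only one, goes through unchanged. So your argument is the paper's argument, done with slightly more care on this bookkeeping point.
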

\begin{proof}
The total transform of $Y$ is again a half-Maslov divisor. The proper transform of $u$ is a holomorphic disc $\tilde{u}$ in $(\tilde{X},L)$ which hits the exceptional divisor in a single point transversely. Therefore its Maslov index is $\mu(u)-2=2$. The $G$-action lifts to a holomorphic $G$-action on the blow-up and this still has an open orbit whose complement is the total transform of $Y$, which is still anticanonical. So $(\tilde{X},L)$ satisfies all the criteria of Corollary \ref{cor:axial-maslov2}. By Corollary \ref{cor:axial-maslov2}, therefore, $\tilde{u}$ is axial for some admissible homomorphism $R$, which implies that $u$ is also $R$-axial.
\end{proof}

Finally we will prove transversality for the two-point evaluation map from the moduli space of twice-marked Maslov 4 discs at points where the disc is axial.

\begin{lma}\label{lma:ev2-transversality}
Suppose that $(X,L)$ is $K$-homogeneous. Let $\beta$ be a relative homology class with Maslov 4. Suppose that for some admissible homomorphism $R\colon\RR\to K$ the disc $u$ is an embedded $R$-axial Maslov 4 holomorphic disc with boundary on $L$ representing the class $\beta$. Assume moreover that $T_{u(0)}X$ contains no vector which is fixed by the action of $S^1\cong R(\RR)\subset K$. Then for any $z_1,z_2\in\partial D$, $z_1\neq z_2$, the two-point evaluation map
\[\OP{\ev}\colon\mM^*_{0,2}(J,\beta)\to L\times L\]
is a submersion at $[u,z_1,z_2]$.
\end{lma}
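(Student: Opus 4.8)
The plan is to identify the differential of $\ev$ with a two-point evaluation map on holomorphic sections and to control that map by using the $R$-axial symmetry to split the Riemann-Hilbert pair of $u$ into one-dimensional pieces, each with \emph{positive} partial index. First I would reduce to a statement about sections. By Lemma \ref{lma-reg} the moduli space $\mM^*_{0,2}(J,\beta)$ is smooth near $[u,z_1,z_2]$, with tangent space $(\ker\dbar\oplus T_{z_1}\partial D\oplus T_{z_2}\partial D)/\mathfrak{psl}(2,\RR)$, where $\dbar$ is the Cauchy-Riemann operator of the Riemann-Hilbert pair $(E,F)=(u^*TX,\,u|_{\partial D}^*TL)$ and $F_{z_i}=T_{u(z_i)}L$. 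The differential $d\,\ev$ sends the class of $(\xi,a_1,a_2)$ to $(\xi(z_1)+du(a_1),\,\xi(z_2)+du(a_2))\in F_{z_1}\oplus F_{z_2}$. Setting $a_1=a_2=0$ shows it suffices to prove that the pure two-point evaluation map $\ev_{z_1,z_2}\colon\ker\dbar\to F_{z_1}\oplus F_{z_2}$, $\xi\mapsto(\xi(z_1),\xi(z_2))$, is surjective: the marked-point variations only enlarge the image, and since the numerator map vanishes on the image of the infinitesimal $PSL(2,\RR)$-action it descends, so its surjectivity is equivalent to that of $d\,\ev$.

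Next I would exploit the symmetry. Because $u$ is $R$-axial, the circle $S^1\cong R(\RR)$ acts on $D$ by rotation fixing the centre $0$ and acts on $X$ so that $u$ is equivariant; hence $(E,F)$ is an $S^1$-equivariant Riemann-Hilbert pair. The heart of the argument is to split $(E,F)$ equivariantly into one-dimensional Riemann-Hilbert summands $(E_i,F_i)$ — an equivariant refinement of Oh's splitting (Theorem \ref{thm-oh-splitting}) — and to relate the partial index $\kappa_i$ of each summand to the weight $m_i$ of the $S^1$-representation on the central fibre $(E_i)_0\subset T_{u(0)}X$. A model computation for the rotation action shows that an $S^1$-invariant totally real boundary condition on a line bundle of central weight $m$ has real winding number $2m$, so that $\kappa_i=2m_i$; in particular $\kappa_i=0$ if and only if $(E_i)_0$ is $S^1$-fixed. (As a sanity check, the summand tangent to the disc carries central weight $1$ and partial index $2$, matching the standard Maslov index of the tangent line.)

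With this in hand the conclusion is quick. By Lemma \ref{lma-reg} the pair $(E,F)$ is generated by global sections, so Lemma \ref{lma-globally-generated} gives $\kappa_i\geq 0$ for all $i$. By hypothesis $T_{u(0)}X$ contains no $S^1$-fixed vector, so no $\kappa_i$ vanishes, and hence $\kappa_i\geq 1$ for every $i$. For a one-dimensional Riemann-Hilbert pair with partial index at least $1$ the two-point evaluation map is surjective: for $\kappa_i=1$ this is Lemma \ref{lma-transverse-twopoint}, and for larger $\kappa_i$ one uses Oh's explicit sections to produce, for each of $z_1,z_2$, a section vanishing there but not at the other, exactly as in the proof of that lemma. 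Since the evaluation map is block-diagonal with respect to the splitting, $\ev_{z_1,z_2}$ is surjective and therefore $\ev$ is a submersion at $[u,z_1,z_2]$.

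The main obstacle is the middle step: establishing the equivariant splitting and the identity $\kappa_i=2m_i$. The splitting is a Grothendieck-type decomposition of an $S^1$-equivariant holomorphic bundle over $D$ into isotypic line bundles, which must be performed compatibly with the totally real boundary condition, and the index-weight identity then follows from the rotation-invariant normal form. Once these are in place the dimension bookkeeping is automatic: the weights are rational and sum to $\mu/2=2$, and being positive on every summand they realise partial indices such as $(2,1,1)$ when $n=3$, consistent with total Maslov index $4$ (note the weights need not be integers, since $R(2\pi)\in K_x$ may act on the normal directions at $u(0)$ by a nontrivial root of unity, producing the odd partial indices permitted by Theorem \ref{thm-oh-splitting}).
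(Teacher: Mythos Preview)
Your reduction in the first paragraph is fine, and the instinct to use the $S^1$-symmetry to force all partial indices to be positive is the right one. The gap is exactly where you flagged it: the equivariant line-bundle splitting of the Riemann-Hilbert pair does not exist in general, and the identity $\kappa_i=2m_i$ is false.

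Here is why. The $\RR$-action on $(E,F)$ via $R$ covers rotation of $D$; at a boundary point $z\in\partial D$ the stabiliser is $2\pi\ZZ$, so only the finite element $R(2\pi)\in K_{u(1)}$ acts on the fibre, and it preserves the \emph{real} subspace $F_z=T_{u(z)}L$. A real representation of a finite cyclic group decomposes into one-dimensional pieces only for the eigenvalues $\pm 1$; complex-conjugate eigenvalue pairs give irreducible two-dimensional real summands. Thus $F$ need not split equivariantly into real line bundles, and hence $(E,F)$ need not split equivariantly into one-dimensional Riemann-Hilbert pairs. In the very example the lemma is applied to (the axial Maslov~4 disc on $L_\Delta$), the weights of $R$ on $T_{u(0)}X$ are $\tfrac{1}{3},\tfrac{2}{3},1$ (rescaling the weights $2,4,6$ of Lemma~\ref{lma:tang-rep} by $1/6$ because $R(\theta)=\exp(\theta\sigma_1/6)$). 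Your formula would then give partial indices $\tfrac{2}{3},\tfrac{4}{3},2$, which are not integers; the actual Oh partial indices are $1,1,2$. So neither the splitting nor the weight--index formula can be made to work as stated.

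The paper's proof sidesteps this by never attempting an equivariant splitting. It uses only that the canonical \emph{filtration} $\mathcal{F}_k=\bigoplus_{\kappa_i\ge k}(E_i,F_i)$ is $S^1$-invariant (proved by a zero-counting argument), then runs through the finitely many nonnegative partitions of $4$. The embedding gives the Maslov~$2$ tangent subbundle $(TD,T\partial D)\subset\mathcal{F}_2$, which rules out the splittings $(0,0,4)$ and $(0,1,3)$; the hypothesis that $T_{u(0)}X$ has no fixed vector is used only to exclude $(0,2,2)$, by observing that the one-dimensional quotient $\ker\dbar/H^0(\mathcal{F}_2)$ would be a trivial $S^1$-representation and hence produce a fixed vector at the centre. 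That leaves $(1,1,2)$, where the normal pair has partial indices $(1,1)$ and Lemma~\ref{lma-transverse-twopoint} finishes. If you want to rescue your approach, the natural fix is to replace the putative line splitting by this invariant filtration and argue as the paper does.
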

\begin{proof}
Note that the Riemann-Hilbert pair $(u^*TX,u^*TL)$ contains a Maslov 2 Riem\-ann-Hil\-bert line subbundle $(TD^2,T\partial D^2)$ because $u$ is an embedding. We will use this fact in the proof. Also, since $u$ is an embedding, it is somewhere injective and hence the moduli space near $u$ is a manifold, so it makes sense to talk about the evaluation map being a submersion.

Decompose the Riemann-Hilbert pair $(u^*TX,u^*TL)$ into its Riemann-Hilbert summands $\bigoplus_{i=1}^3(E_i,F_i)$. We know from Lemma \ref{lma-reg} that the partial indices $\mu(E_i,F_i)$ are nonnegative. If the summands are ordered by increasing partial index then the possibilities are:

\begin{tabular}[htb]{cccc}
(a) & $0$ & $0$ & $4$\\
(b) & $0$ & $1$ & $3$\\
(c) & $0$ & $2$ & $2$\\
(d) & $1$ & $1$ & $2$
\end{tabular}

In each case the pair is filtered by
\[\mathcal{F}_k=\bigoplus_{\mu(E_i,F_i)\geq k}(E_i,F_i).\]
We claim that the action of $S^1=R(\RR)\subset K$ preserves this filtration. To see this, let $g\in S^1$ be a group element and suppose that $g\mathcal{F}_k\not\subset\mathcal{F}_k$. Then there is a section $\sigma$ of $\mathcal{F}_k$ such that $g\sigma$ projects nontrivially to a section $p_i(g\sigma)$ of a summand $(E_i,F_i)$ with $\mu(E_i,F_i)<k$. Being a section of $\mathcal{F}_k$, $\sigma$ has at least $k$ zeros (counted with multiplicity) and the same is therefore true of $p_i(g\sigma)$. But $p_i(g\sigma)$ is also a section of a one-dimensional Riemann-Hilbert pair with Maslov index strictly less than $k$, and therefore has strictly fewer than $k$ zeros (counted with multiplicity).

By a similar argument, the Maslov 2 subbundle $(TD^2,T\partial D^2)$ sits inside $\mathcal{F}_2$. This precludes cases (a) and (b) since $\mathcal{F}_2$ is then a Riemann-Hilbert summand of the wrong Maslov index. In case (d) this means that the normal bundle has partial indices $\mu(E_1,F_1)=\mu(E_2,F_2)=1$ and the result follows from Lemma \ref{lma-transverse-twopoint}.

It remains to rule out case (c). The space of holomorphic sections of $(u^*TX,u^*TL)$ is a representation of $S^1$ and contains the subspace of sections $H^0(\mathcal{F}_2)$ as a subrepresentation. The complement of $H^0(\mathcal{F}_2)$ is (real) one-dimensional and is therefore a trivial subrepresentation. This implies that there exists a vector in $T_{u(0)}X$ (the section evaluated at $z=0$) which is fixed by the $S^1$-action. This contradicts the assumption on $T_{u(0)}X$.
\end{proof}

\part{An example: the Chiang Lagrangian}

\section{Quasihomogeneous threefolds of $\SL$}

The Chiang Lagrangian is the first in a family of examples of homogeneous Lagrangians. We describe these in greater generality in this section because it seems most natural. We believe that our methods should generalise to the higher examples. From Section \ref{sect:topol} we will specialise to the case of the Chiang Lagrangian.

\subsection{$\SL$-orbits in $\Sym^n\cp{1}$}

Let $V$ denote the standard two-dimensional complex representation of $\SL$. The varieties
\[\cp{n}=\PP(\Sym^nV)\cong\Sym^n\PP(V)\]
are isomorphic as $\SL$-spaces, so we can think of a configuration of $n$ points on $\cp{1}=\PP(V)$ as a point in the projective space $\cp{n}$. Let $C$ be a configuration of $n\geq 3$ distinct points on $\cp{1}$ and consider the closure $\overline{\SL\cdot C}$ of its $\SL$-orbit in $\cp{n}$. This is a {\em quasihomogeneous} complex threefold $X_C$, in other words there is a dense Zariski-open $\SL$-orbit.

There are precisely four cases in which $X_C$ is smooth \cite{AluFab}; we will specify these by giving a representative configuration from the orbit:
\begin{itemize}
\item $C=\Delta$, the set of zeros of the polynomial $x(x^2+3y^2)=0$ in $\cp{1}$. In $S^2\cong\cp{1}$ these zeros lie at the vertices
\[(0,0,1),\qquad (0,\sqrt{3}/2,-1/2),\qquad (0,-\sqrt{3}/2,-1/2)\]
of an equilateral triangle. The stabiliser $\Gamma_{\Delta}$ of $\Delta$ is the binary dihedral group $\tilde{D}_3$ of order twelve.
\item $C=T$, the vertex set of a regular tetrahedron on $\cp{1}$; equivalently the zeros of the polynomial $x^4+2i \sqrt{3} x^2 y^2 + y^4$. The stabiliser $\Gamma_T$ of $T$ is the binary tetrahedral group (order 24).
\item $C=O$, the vertex set of a regular octahedron on $\cp{1}$; equivalently the zeros of the polynomial $xy(x^4-y^4)$. The stabiliser $\Gamma_O$ of $O$ is the binary octahedral group (order 48).
\item $C=I$, the vertex set of a regular icosahedron on $\cp{1}$; equivalently the zeros of the polynomial $xy(x^{10}+11x^5y^5-y^{10})$. The stabiliser $\Gamma_I$ of $I$ is the binary icosahedral group (order 120).
\end{itemize}
The corresponding varieties $X_C$ have $b_2=1$, $b_3=0$ and are Fano. The first Chern class of $X_C$ is $c_1(X_C)=\ell_CH$ where $\ell_C=4,3,2,1$ for $C=\Delta,T,O,I$. The cohomology ring is
\[H^*(X_C;\ZZ)=\ZZ[H,E]/(H^2=k_CE,\ E^2=0)\]
where $k_{\Delta}=1$, $k_{T}=2$, $k_O=5$, $k_I=22$. In fact $X_{\Delta}\cong\cp{3}$, $X_T$ is a quadric threefold, $X_O$ is the Del Pezzo threefold $V_5$ of degree five and $X_I$ is the Mukai-Umemura threefold $V_{22}$. Note that the Coxeter-Dynkin diagrams of the finite stabiliser groups are the $E_5$, $E_6$, $E_7$ and $E_8$ diagrams.

\subsection{Geometry of the compactification}

Each of these varieties has a decomposition as
\[X_C=W_C\cup Y_C\]
where $W_C$ is the open orbit $\SL\cdot C$ which is isomorphic to $\SL/\Gamma_C$ and $Y_C$ is a compactification divisor preserved by the $\SL$-action (see {\cite[Lemma 1.5]{Muk-Ume}}).

The divisor $Y_C$ consists of all $n$-tuples of points where $n-1$ of the points coincide. Inside $Y_C$ is the locus $N_C$ consisting of all $n$-tuples of coincident points. In another language, $N_C$ is the rational normal curve coming from the canonical embedding $\PP(V)\to\PP(\Sym^nV)$ and $Y_C$ is its tangent variety.

The orbit decomposition of $X_C$ is therefore $W_C\cup (Y_C\setminus N_C)\cup N_C$. The singular divisor $Y_C$ is anticanonical in each case.

Inside each of the open orbits $W_C$ is a copy of $L_C=SU(2)/\Gamma_C$, the $SU(2)$-orbit of $C$. This is {\em a priori} a totally real submanifold; we will see that for a suitable choice of K\"{a}hler form on $X_C$ it is a Lagrangian submanifold.

\subsection{K\"{a}hler form and moment map}
\label{moment}

Let $x$ and $y$ be coordinates on $V^*$; consider $\Sym^nV$ as the space of polynomials $p(x,y) = \sum_{k=0}^n v_kx^{k}y^{n-k}$ in $x$ and $y$ and use the coefficients $v_k$ as homogeneous coordinates on $\PP(\Sym^nV)$. Recall that the $(n+1)$-dimensional irreducible representation of $SU(2)$ is defined by:
\[ \left(\begin{array}{cc}
\alpha & \beta\\
-\bar{\beta} & \bar{\alpha}  
\end{array}\right) \cdot p(x,y) \mapsto p(\alpha x - \bar{\beta} y , \beta x + \bar{\alpha} y ) \]
The representation $\Sym^nV$ inherits a Hermitian inner product from the standard Hermitian inner product on $V^*$, for which $|x|^2=|y|^2=1$ and $x\cdot y=0$. On $\Sym^nV$ with respect to the coordinates $v_k$ this gives us an invariant K\"{a}hler form (cf. \cite{bombieri}):
\[\frac{i}{2}\sum_{k=0}^n\binom{n}{k}^{-1}dv_k\wedge d\overline{v}_k = \frac{i}{2}\sum_{k=0}^n du_k\wedge d\overline{u}_k\]
where we introduced unitary coordinates $u_k = \binom{n}{k}^{-1/2} v_k$ for convenience. The action of $SU(2)$ on $\Sym^n V$ commutes with the diagonal action of $S^1$ given by \[ \theta : p(x,y) \mapsto p(e^{i \theta} x , e^{i\theta} y) \] This also preserves the above K\"ahler form, hence via symplectic reduction with respect to the diagonal $S^1$ action, we get a Hamiltonian $SU(2)$ action on the projective space $\cp{n} = \PP(\Sym^n V)$ equipped with the standard Fubini-Study form. 

Now, $X_C$ is a projective variety sitting inside $\cp{n}$ which consists of a union of $SU(2)$-orbits. By restriction, we induce a symplectic structure and a Hamiltonian $SU(2)$-action on $X_C$. The equivariant moment map is given in coordinates on $\Sym^nV$ by
{\footnotesize \[\mu_n(u_0,\ldots,u_n)= \left(\begin{array}{cc}
 i \sum_{k=0}^n(n-2k)|u_k|^2 & 2i \sum_{k=0}^{n-1} \sqrt{ (k+1) (n-k)} u_k \overline{u}_{k+1} \\
 2i \sum_{k=0}^{n-1} \sqrt{ (k+1) (n-k)} \overline{u}_{k} u_{k+1} & -i \sum_{k=0}^n(n-2k)|u_k|^2
\end{array}\right) \]}
\par\noindent where we identified the Lie algebra $\mathfrak{su}(2)$ with its dual $\mathfrak{su}(2)^*$ via the invariant bilinear form $\langle A, B \rangle = \frac{1}{4} \text{Tr} (AB) $.

We can now check that in each case, $L_C=\mu_n^{-1}(0)$ so that $L_C$ is a Lagrangian submanifold. It suffices to check that the point in $\cp{n}$ corresponding to $C$ is in $\mu_n^{-1}(0)$. This is easy to check since the configurations $C$ are given in homogeneous coordinates on $\cp{n}$ by
\begin{align*}
\Delta &=[1:0:3:0]\\
T&=[1:0:2i \sqrt{3}:0:1]\\
O&=[0:1:0:0:0:1:0]\\
I&=[0:1:0:0:0:0:11:0:0:0:0:-1:0].
\end{align*}
\begin{rmk}
In the case $\Delta$, this is the Chiang Lagrangian \cite{Chiang}.
\end{rmk}
The first homology of $L_C$ is $H_1(L_C;\ZZ)=\ZZ/\ell_C$. In each case the long exact sequence in relative homology gives
\[0\to \ZZ\to H_2(X_C,L_C;\ZZ)\to\ZZ/\ell_C\to 0\]
and it is easy to find discs whose boundaries generate $H_1(L_C;\ZZ)$ so $H_2(X_C,L_C;\ZZ)=\ZZ$ (see Example \ref{exm:maslov2} for such discs in the case $C=\Delta$).
\begin{rmk}
For more general compact Lie groups $K$ there is the following result of Bedulli and Gori {\cite[Theorem 1]{BG}}. Let $K$ be a compact Lie group of dimension $n$ and let $G$ denote its complexification. Let $X$ be a (real) $2n$-dimensional compact K\"{a}hler manifold with $h^{1,1}=1$ admitting a Hamiltonian action of $K$ by K\"{a}hler isometries. The $K$-action complexifies to an action of $G$ and we will further assume that this complexified action has a dense Zariski-open orbit whose stabiliser is a finite group $\Gamma\subset K$. Then there is an equivariant moment map $\phi\colon X\to\mathfrak{k}^*$ and the fibre over zero is a Lagrangian $K$-orbit diffeomorphic to $K/\Gamma$.
\end{rmk}
\begin{rmk}
In fact there is a complete classification of $\SL$-equivariant compactifications of $\SL/\Gamma$ for a finite subgroup $\Gamma$ due to Nakano \cite{Nak}, building on work of Mukai and Umemura \cite{Muk-Ume}. There are two further examples with $b_2=1$, namely the standard actions of $\SL$ on $\cp{3}$ and the quadric threefold, where the corresponding Lagrangians are respectively the standard $\RR\PP^3$ and real ellipsoid. These have minimal Maslov numbers 4 and 6 respectively, in contrast to the examples $X_C$ which have minimal Maslov 2 (see Lemma \ref{lma:monotone} below).
\end{rmk}

\subsection{Chern and Maslov classes}\label{sct-chern-maslov}

Note that $Y_C\subset X_C$ is an anticanonical divisor: the $\SL$-action on $X_C$ defines a bundle map $\alpha\colon X_C\times\mathfrak{sl}(2,\CC)\to TX_C$ which is an isomorphism on the open orbit $X_C\setminus Y_C$; along $Y_C$ the holomorphic $n$-form $\Lambda^3\alpha$ vanishes so $Y_C$ is anticanonical, see {\cite[Section 3]{hitchin}}. In particular, the Chern class evaluated on a holomorphic curve is equal to the homological intersection number of the curve with $Y_C$.

The Lagrangian $L_C$ is disjoint from $Y_C$ and has constant phase for the volume form $\Lambda^3\alpha$; hence the Maslov class of a holomorphic disc with boundary on $L_C$ is equal to its (relative) homological intersection number with $Y_C$, see {\cite[Lemma 3.1]{Aur}}. In the language of Definition \ref{dfn:half-maslov} $Y_C$ is a half-Maslov divisor.

\begin{lma}\label{lma:monotone}
The Lagrangians $L_C$ are monotone
\[\omega(\beta)=K\mu(\beta)\]
for some $K>0$, and have minimal Maslov number $2$.
\end{lma}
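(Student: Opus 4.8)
The plan is to exploit the fact, established just above, that $H_2(X_C,L_C;\ZZ)\cong\ZZ$, so that both the area homomorphism $\omega$ and the Maslov homomorphism $\mu$ are linear functionals on a rank-one group and are hence automatically proportional as soon as $\mu\not\equiv 0$. Writing $\beta_0$ for a generator, it suffices to compute $\mu(\beta_0)$ and $\omega(\beta_0)$ on this single class and to check that they share a sign.

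First I would compute $\mu(\beta_0)$ using the closed curves in the image of $\iota\colon H_2(X_C;\ZZ)\to H_2(X_C,L_C;\ZZ)$. The long exact sequence $0\to\ZZ\to H_2(X_C,L_C;\ZZ)\to\ZZ/\ell_C\to 0$ identifies the image of $\iota$ with the index-$\ell_C$ subgroup, so if $h$ denotes the line class generating $H_2(X_C;\ZZ)$ then, after fixing the sign of $\beta_0$, we have $\iota(h)=\ell_C\beta_0$. The Maslov index of a capped-off closed class is twice its first Chern number (equivalently, since $Y_C$ is a half-Maslov divisor, $\mu(\iota(h))=2\,\iota(h)\cdot Y_C=2\,h\cdot Y_C=2\,c_1(X_C)\cdot h$); as $c_1(X_C)=\ell_C H$ and $H\cdot h=1$ this yields $\mu(\iota(h))=2\ell_C$ and hence $\mu(\beta_0)=2$.

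Monotonicity, the sign of $K$, and the minimal Maslov number then all follow. Proportionality gives $\omega=K\mu$ with $K=\omega(\beta_0)/\mu(\beta_0)=\omega(\beta_0)/2$; since the line $h$ is an effective holomorphic curve we have $\omega(h)>0$, and $\omega(\iota(h))=\omega(h)$ forces $\omega(\beta_0)=\omega(h)/\ell_C>0$, so $K>0$ and $L_C$ is monotone. Finally $\mu\bigl(H_2(X_C,L_C;\ZZ)\bigr)=\mu(\ZZ\beta_0)=2\ZZ$, so every disc has even Maslov index and the smallest positive value is $2$; this is consistent with Corollary \ref{cor:nomuleq1}, which independently rules out Maslov index below $2$, and with the explicit Maslov $2$ discs of Example \ref{exm:maslov2} realising the minimum. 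The only delicate point is the bookkeeping of the second paragraph: correctly identifying the image of $\iota$ as the index-$\ell_C$ subgroup and matching orientation conventions so that $\mu(\beta_0)$ and $\omega(\beta_0)$ emerge with the same positive sign, rather than accruing an unwanted factor of $\ell_C$ or a sign error.
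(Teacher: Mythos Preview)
Your argument is correct and proceeds by a different, more homological route than the paper. The paper exhibits an explicit axial holomorphic disc: rotating $C$ about an axis through one of its vertices and complexifying gives a rational curve meeting the half-Maslov divisor $Y_C$ twice, and one hemisphere is a Maslov~$2$ holomorphic disc with positive area; combined with $H_2(X_C,L_C;\ZZ)\cong\ZZ$ this finishes. You instead compute $\mu$ and $\omega$ on the generator $\beta_0$ indirectly, by pushing the line class $h\in H_2(X_C;\ZZ)$ into relative homology (where it lands as $\ell_C\beta_0$), reading off $\mu(\iota(h))=2c_1(X_C)\cdot h=2\ell_C$ and $\omega(\iota(h))=\omega(h)>0$, and dividing by $\ell_C$. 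Your approach is clean once one has the cohomological data $c_1(X_C)=\ell_C H$ and an effective generator of $H_2$, and it avoids any explicit disc construction; the paper's approach has the side benefit of actually producing the axial Maslov~$2$ discs that are used throughout Part~II (so you may wish to note that the existence of such discs, for general $C$, is a byproduct of the paper's proof rather than only of Example~\ref{exm:maslov2}).
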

\begin{proof}
Let $P$ be an axis through a point $v\in C$ and its antipode $\bar{v}$ and let $S^1\subset SU(2)$ be the subgroup of rotations fixing this axis. If $\CC^*\subset\SL$ denotes the complexification of $S^1$ then the closure of the $\CC^*$-orbit of $C$ is a holomorphic sphere which intersects $Y_C$ twice: once transversely at a smooth point (the configuration comprising $v$ and the $(n-1)$-fold point at $\bar{v}$) and once elsewhere\footnote{If $\bar{v}$ is in $C$ then the sphere intersects $Y_C$ at the smooth point comprising $\bar{v}$ and the $(n-1)$-fold point at $v$; otherwise it intersects at the $n$-fold point at $v$.}. The hemisphere containing the transverse intersection at a smooth point is therefore a Maslov 2 holomorphic disc with positive area. Since $H_2(X_C,L_C;\ZZ)\cong\ZZ$ this is enough to prove the lemma.
\end{proof}

\subsection{Quantum cohomology and eigenvalues of the first Chern class}

The quantum cohomology of $X_C$ is computed in {\cite[Section 2]{BayMan}}. We consider it as a $\ZZ/2$-graded ring (in particular we set the Novikov variable $q=1$). It is
\[QH^*(X_C)=\ZZ[H,E]/(H^2=k_CE+R_C,\ E^2=Q_C)\]
where $H$ and $E$ have grading zero and the quantum contributions $R_C$ and $Q_C$ are given in Figure \ref{fig:c1-char-poly}.

\begin{figure}[htb]
\begin{center}
\begin{tabular}{|c|c|c|}
\hline
$C$ & $R_C$ & $Q_C$\\ \hline
$\Delta$ & $0$ & $1$\\
$T$ & $0$ & $H$ \\
$O$ & $3$ & $E+1$\\
$I$ & $2H+24$ & $2E+H+4$\\ \hline
\end{tabular}
\caption{The quantum contributions to the relations in the $QH^*(X_C)$.}
\label{fig:q-contrib}
\end{center}
\end{figure}

The eigenvalues of $c_1\star\colon QH^*(X_C)\to QH^*(X_C)$ (over a field $\kk$) are important for Lagrangian Floer theory. They arise as counts of Maslov 2 discs with boundary on monotone Lagrangian submanifolds whose Floer cohomology over $\kk$ is non-vanishing \cite{Aur}. More precisely:
\begin{dfn}
If $L\subset X$ is a monotone Lagrangian submanifold then the invariant $\mathfrak{m}_0(L)$ is defined to be the sum (over relative homology classes $\beta\in H_2(X,L;\ZZ)$ with Maslov number 2) of degrees of evaluation maps $\mM_{0,1}(J,\beta)\to L$ where $J$ is a regular compatible almost complex structure and $\mM_{0,1}(J,\beta)$ denotes the moduli space of $J$-holomorphic discs representing the class $\beta$ with one marked point on the boundary.
\end{dfn}
\begin{prp}[Auroux \cite{Aur}, Kontsevich, Seidel]\label{prp:c1-eval}
Let $\kk$ be a field of characteristic not equal to 2. If $L\subset X$ is a compact, orientable, spin, monotone Lagrangian submanifold whose self-Floer cohomology over $\kk$ is non-vanishing then $\mathfrak{m}_0(L)$ is an eigenvalue of $c_1\star$ acting on $QH^*(X;\kk)$.
\end{prp}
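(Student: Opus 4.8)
The plan is to exploit the unital ring homomorphism from quantum cohomology to self-Floer cohomology furnished by the length-zero closed--open string map, and to observe that this map intertwines quantum multiplication by $c_1$ with multiplication by the scalar $\mathfrak{m}_0(L)$.

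First I would record that the hypotheses guarantee that $HF^*(L,L;\kk)$ is well-defined and carries a unital ring structure. Since $L$ is monotone with minimal Maslov number at least $2$, the curved $A_\infty$-algebra of $L$ has curvature $\mathfrak{m}_0=\mathfrak{m}_0(L)\,e$ a multiple of the strict unit $e$, and the $A_\infty$-relation governing the square of the differential reads
\[\mathfrak{m}_1^2(x)+\mathfrak{m}_2(\mathfrak{m}_0,x)\pm\mathfrak{m}_2(x,\mathfrak{m}_0)=0.\]
Because $\mathfrak{m}_0$ is a multiple of the strict unit, the last two terms are each $\pm\mathfrak{m}_0(L)\,x$, and for the self-pairing they carry the same coefficient and cancel, so $\mathfrak{m}_1^2=0$ and $HF^*(L,L;\kk)$ is defined. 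By hypothesis it is nonzero, and the Floer product makes it a unital $\kk$-algebra whose unit $e$ is therefore nonzero.

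Next I would introduce the closed--open map
\[\mathcal{CO}^0\colon QH^*(X;\kk)\to HF^*(L,L;\kk),\]
defined by counting holomorphic discs with boundary on $L$ carrying one interior marked point constrained to a cycle Poincar\'e dual to the input class and one boundary output point. The essential structural facts are that $\mathcal{CO}^0$ is a unital homomorphism of $\kk$-algebras (sending the fundamental class $[X]$ to $e$) and that it computes
\[\mathcal{CO}^0\big(c_1(X)\big)=\mathfrak{m}_0(L)\,e.\]
This last identity is the geometric core: $c_1(X)$ is Poincar\'e dual to the half-Maslov divisor $Y$, so the discs counted by $\mathcal{CO}^0(c_1)$ are precisely the Maslov $2$ discs meeting $Y$ once and passing through a generic point of $L$, and this signed count is by definition $\mathfrak{m}_0(L)$. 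Granting these facts, the conclusion is pure algebra. Because $\mathcal{CO}^0$ is a ring homomorphism, for every $a\in QH^*(X;\kk)$ we have
\[\mathcal{CO}^0(c_1\star a)=\mathcal{CO}^0(c_1)\cdot\mathcal{CO}^0(a)=\mathfrak{m}_0(L)\,\mathcal{CO}^0(a),\]
so $\mathcal{CO}^0\circ\big(c_1\star-\mathfrak{m}_0(L)\,\mathrm{id}\big)=0$. If $\mathfrak{m}_0(L)$ were not an eigenvalue of $c_1\star$, then $c_1\star-\mathfrak{m}_0(L)\,\mathrm{id}$ would be invertible, forcing $\mathcal{CO}^0=0$; but $\mathcal{CO}^0([X])=e\neq0$, a contradiction. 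Hence $\mathfrak{m}_0(L)$ is an eigenvalue of $c_1\star$ on $QH^*(X;\kk)$.

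The main obstacle is entirely in the geometric input: constructing $\mathcal{CO}^0$ and proving it respects units and products requires the usual TQFT gluing and compactness package for discs with one interior and one boundary marked point, together with coherent orientations of these moduli spaces. This is where the spin structure and the assumption $\mathrm{char}\,\kk\neq2$ enter, ensuring that the relevant enumerative data are the \emph{signed} invariants defining $\mathfrak{m}_0(L)$ rather than mod-$2$ counts. Verifying $\mathcal{CO}^0(c_1)=\mathfrak{m}_0(L)\,e$ in particular demands a careful analysis of the boundary degenerations of the one-dimensional moduli space interpolating between the interior constraint on $Y$ and the point constraint on the boundary, along with the matching of signs; all the Floer-theoretic hypotheses (monotonicity, minimal Maslov $2$, orientability, spin) are in service of making this count well-defined and equal to $\mathfrak{m}_0(L)$.
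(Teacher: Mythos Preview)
The paper does not supply its own proof of this proposition: it is stated with attribution to Auroux, Kontsevich and Seidel and then used as a black box. Your argument via the length-zero closed--open map is correct and is precisely the standard proof one finds in Auroux's paper \cite{Aur}; there is nothing to compare against in the present paper.

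One small imprecision worth noting: in justifying the key identity $\mathcal{CO}^0(c_1)=\mathfrak{m}_0(L)\,e$ you invoke that ``$c_1(X)$ is Poincar\'e dual to the half-Maslov divisor $Y$''. The existence of such a divisor is a feature of the specific examples in this paper (Definition~\ref{dfn:half-maslov}), not part of the hypotheses of the general proposition. The identity holds without that assumption: for any monotone $L$, a dimension count shows that only Maslov~2 discs contribute to $\mathcal{CO}^0(c_1)$, and each such disc meets a generic cycle Poincar\'e dual to $c_1$ exactly once (since $\mu(u)/2=c_1\cdot[u]=1$), so the count coincides with $\mathfrak{m}_0(L)$. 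With that adjustment your argument is complete and matches the cited source.
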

The characteristic polynomial $\chi_C(\lambda)$ of the matrix $c_1\star$ in each of our examples can be calculated by hand using the presentation above. We list these characteristic polynomials in Figure \ref{fig:c1-char-poly}.

\begin{figure}[htb]
\begin{center}
\begin{tabular}{|c|c|c|}
\hline
$C$ & $\chi_C(\lambda)$\\ \hline
$\Delta$ & $\lambda^4-256$\\
$T$ &  $\lambda(\lambda^3-108)$\\
$O$ & $\lambda^4-44\lambda-16$\\
$I$ & $\lambda^4-4\lambda^3-88\lambda^2-300\lambda-304$\\ \hline
\end{tabular}
\caption{The characteristic polynomial of quantum multiplication by the first Chern class for the quasihomogeneous varieties $X_C$.}
\label{fig:c1-char-poly}
\end{center}
\end{figure}

\section{The topology of the Chiang Lagrangian, $L_{\Delta}$}\label{sect:topol}

We now specialise to the case $C=\Delta$, the equilateral triangle with vertices at
\[(0,0,1),\qquad (0,\sqrt{3}/2,-1/2),\qquad (0,-\sqrt{3}/2,-1/2)\]
on $S^2\cong\cp{1}$. We obtain a Lagrangian $L_{\Delta}=SU(2)/\Gamma_{\Delta}\subset\cp{3}=X_{\Delta}$.

\subsection{A fundamental domain}

 The stabiliser of $C$ is the {\em binary dihedral group of order twelve}:
\[\Gamma_{\Delta}=\left\{\left(\begin{array}{cc}\omega & 0 \\ 0 & \bar{\omega}\end{array}\right)\ :\ \omega^6=1\right\}\cup\left\{\left(\begin{array}{cc}0 & i\bar{\omega} \\ i\omega & 0\end{array}\right)\ :\ \omega^6=1\right\}\subset \SU(2)\subset\SL\]
Note that the action of $\SU(2)$ on $\cp{1}$ is the usual quaternionic rotation action: if $\mathbf{u}=(u_1,u_2,u_3)$ is a unit vector and
\[\sigma_1=\left(\begin{array}{cc}i & 0 \\ 0 & -i\end{array}\right),\ \sigma_2=\left(\begin{array}{cc}0 & 1 \\ -1 & 0\end{array}\right),\ \sigma_3=\left(\begin{array}{cc}0 & i \\ i & 0\end{array}\right)\]
are the Pauli matrices then $\exp(\theta\sum u_i\sigma_i)$ acts as a right-handed rotation by $2\theta$ around the axis $\mathbf{u}$.

We will identify $g\in SU(2)/\Gamma_{\Delta}$ with the point $g\Delta\in L_{\Delta}$. The universal cover $SU(2)\cong\tilde{L}_{\Delta}$ is tiled by twelve fundamental domains related by the action of $\Gamma_{\Delta}$: each domain is a hexagonal prism centred at the corresponding element of $\Gamma_{\Delta}$. This tiling shown in Figure \ref{fig:prism}, stereographically projected so that the identity sits at the origin.

A single fundamental domain comes with face and edge identifications such that the quotient space is $L_{\Delta}$. Opposite quadrilateral faces are identified by a right-handed twist of $\pi/2$ radians and opposite hexagonal faces are identified by a right-handed twist of $\pi/3$ radians; the corresponding edge identifications are indicated in Figure \ref{fig:edge-identifications}. The resulting cell structure on $L_{\Delta}$ has three vertices (denoted $x_1$, $x_2$ and $x_3$ in the figure), six 1-cells (denoted 1, 2, 3, 4, 5, 6 in the figure), four 2-cells and a 3-cell. If $X$ is a subset of the fundamental domain, we will write $\bar{X}$ for the corresponding subset of the quotient $L_{\Delta}$.

\begin{figure}[htb]
\begin{center}
\includegraphics[width=300px]{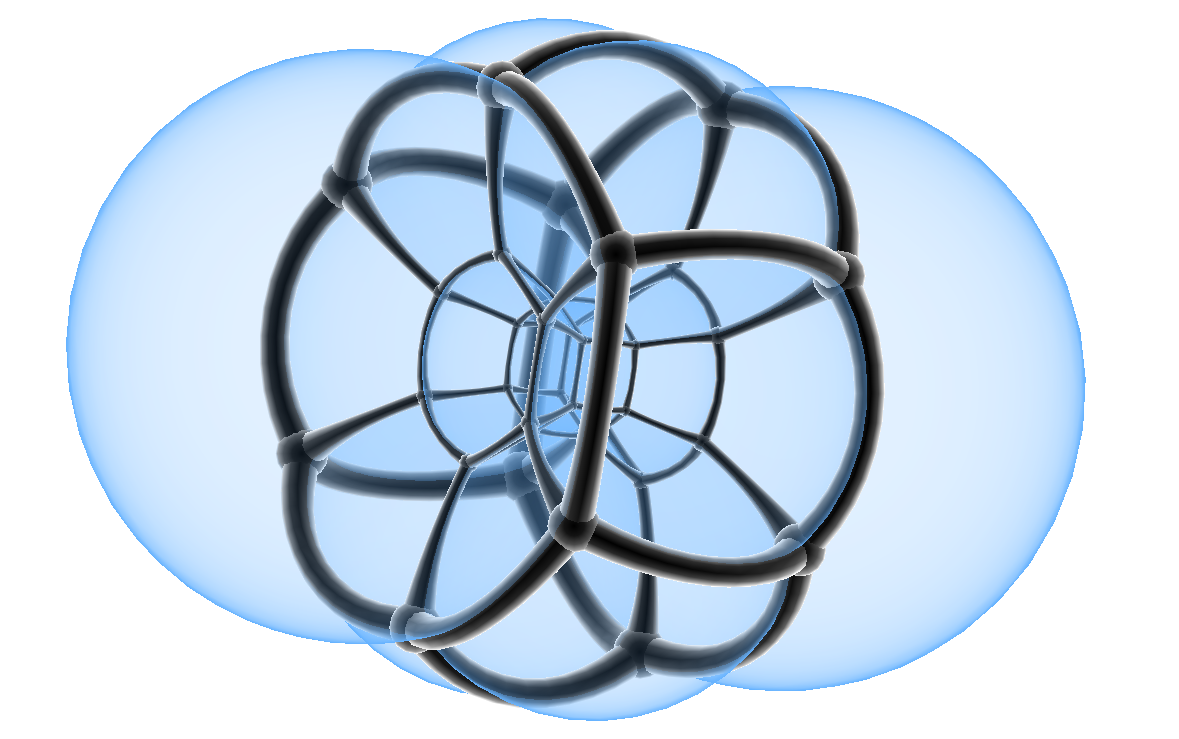}
\caption{The tiling of $SU(2)$ by fundamental domains for $\Gamma_{\Delta}$, the binary dihedral group of order twelve. Picture produced using \href{http://www.jenn3d.org}{Jenn3d}.}
\label{fig:prism}
\end{center}
\end{figure}

\begin{figure}[htb]
\begin{center}
\includegraphics{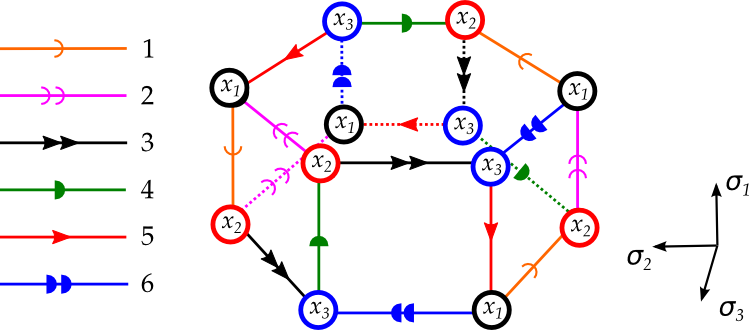}
\caption{Edge identifications for the fundamental domain. For orientation we also include the axes corresponding to the Pauli matrices $\sigma_1,\sigma_2,\sigma_3\in\mathfrak{su}(2)=T_1SU(2)$.}
\label{fig:edge-identifications}
\end{center}
\end{figure}

\subsection{A Heegaard splitting}

Take the union $S$ of all the 1-cells and the two hexagonal faces; in the quotient $L_{\Delta}$ this descends to a hexagon $\bar{S}$ with opposite vertices identified. Indeed $\bar{S}$ retracts onto $\bar{T}$ where $T$ is the union of a two slightly smaller hexagons, each with six radial prongs connecting it to the vertices (see Figure \ref{fig:spider}). An open neighbourhood $N$ of $\bar{S}$ (or $\bar{T}$) is therefore a genus 3 handlebody. Note that the complement $N'=L_{\Delta}\setminus N$ is also a genus 3 handlebody which retracts onto the wedge of three circles $\bar{\alpha}_1'\cup\bar{\alpha}_2'\cup\bar{\alpha}_2'$ where $\alpha_1',\alpha_2',\alpha_3'$ are the three axes of the prism through the centre, $m'$, passing through the midpoints $x_1'$, $x_2'$ and $x_3'$ of the quadrilateral faces. The decomposition $L_{\Delta}=N\cup N'$ is therefore a Heegaard splitting (see Figure \ref{fig:heegaard}).

\begin{figure}[htb]
\begin{center}
\includegraphics{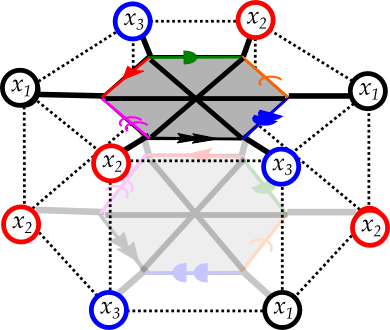}
\caption{The subset $T$ comprising two hexagons (one faint on the bottom face) each with six prongs connecting them to the vertices.}
\label{fig:spider}
\end{center}
\end{figure}

\begin{figure}[htb]
\begin{center}
\includegraphics{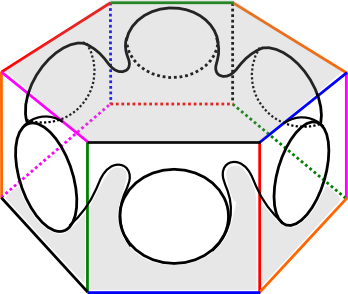}
\caption{The genus 3 Heegaard splitting.}
\label{fig:heegaard}
\end{center}
\end{figure}

\begin{figure}[htb]
\begin{center}
\includegraphics{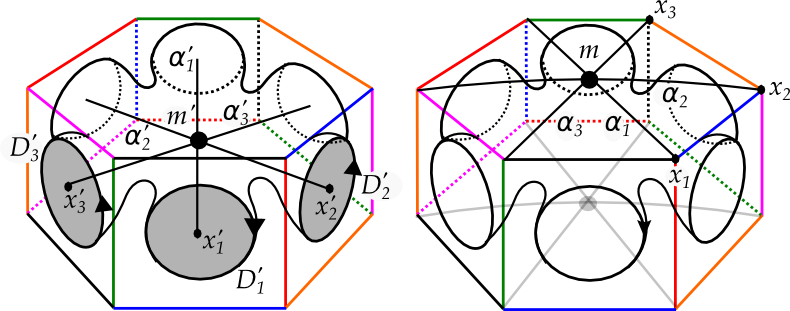}
\caption{The Heegaard splitting is associated with a Morse function. Here we see: (left) the minimum ($m'$), the index one critical points ($x_1'$, $x_2'$, and $x_3'$), their ascending discs ($D_1'$, $D_2'$ and $D_3'$) in grey and descending manifolds ($\alpha_1'$, $\alpha_2'$ and $\alpha_3'$); (right) the maximum ($m$), the index two critical points ($x_1$, $x_2$ and $x_3$) and their ascending manifolds ($\alpha_1$, $\alpha_2$ and $\alpha_3$).}
\label{fig:morse}
\end{center}
\end{figure}

\subsection{A Morse function}

This Heegaard splitting comes from a Morse function with a minimum at the centre, $m'$, a maximum at the midpoint, $m$, of the hexagonal faces, three index one critical points at the midpoints $x_1'$, $x_2'$ and $x_3'$ of the quadrilateral faces and three index two critical points at the vertices $x_1$, $x_2$, and $x_3$.

\begin{itemize}
\item In $N'$, the ascending manifolds of $x_1'$, $x_2'$ and $x_3'$ are the discs of intersection between $N'$ and the quadrilateral faces. The descending manifolds of $x_1'$, $x_2'$ and $x_3'$ are $\alpha_1'$, $\alpha_2'$ and $\alpha_3'$.
\item Figure \ref{fig:N-handlebody} shows the handlebody $N$ as a neighbourhood of $\bar{T}$. The thick lines are the flowlines $\alpha_1$, $\alpha_2$ and $\alpha_3$ connecting $x_1$, $x_2$, and $x_3$ to the maximum. The smaller shaded discs are the descending manifolds of $x_1$, $x_2$, and $x_3$.
\end{itemize}

Consider the 3-Sylow subgroup $\ZZ/3\subset\Gamma_{\Delta}$ which rotates the hexagonal prism through multiples of $2\pi/3$. Note that our Morse function can be chosen to be invariant under the left action of $\ZZ/3$ on $SU(2)/\Gamma_{\Delta}$.

\begin{figure}[htb]
\begin{center}
\includegraphics{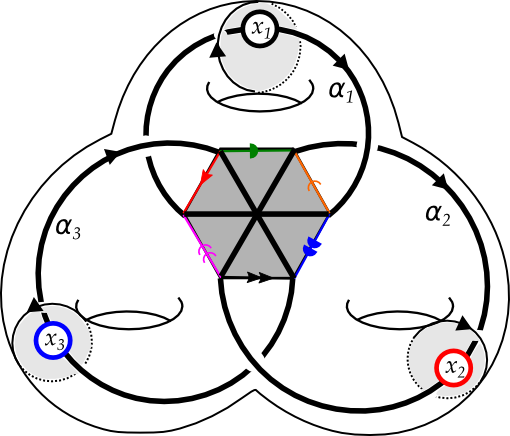}
\caption{The handlebody $N$ as a neighbourhood of $\bar{T}$. The thick lines are the flowlines connecting $x_1$, $x_2$, and $x_3$ to the maximum. The smaller shaded discs are the descending manifolds of $x_1$, $x_2$, and $x_3$.}
\label{fig:N-handlebody}
\end{center}
\end{figure}

From the edge identifications we can read off how the boundaries of the ascending manifolds $D_1'$, $D_2'$ and $D_3'$ of $x_1'$, $x_2'$ and $x_3'$ intersect the descending manifolds of $x_1$, $x_2$ and $x_3$ and hence compute the Morse differential. Consider the loop $\gamma_1'$ of intersection between $D_1'$ and the Heegaard surface. Pushing this loop into the handlebody $N$ it represents the element
\[\gamma_1'=\alpha_3\alpha_1\alpha_3\alpha_2\in\pi_1(N).\]
as can be seen in Figure \ref{fig:pi1}.

\begin{figure}[htb]
\begin{center}
\includegraphics{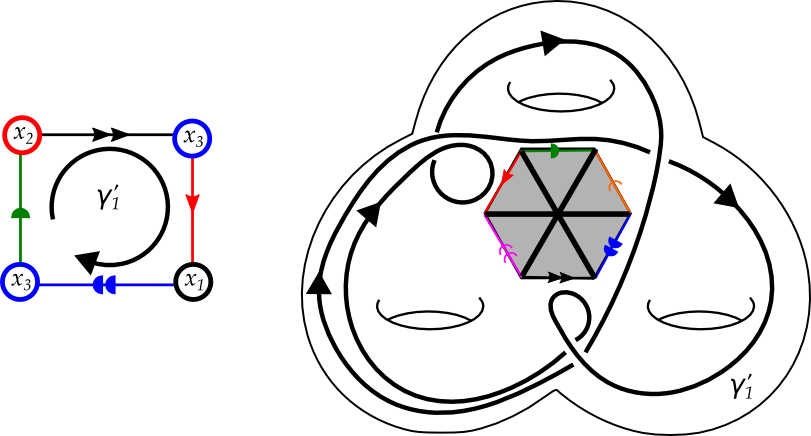}
\caption{The loop $\gamma_1'=\partial D_1'$ as an element of $\pi_1(N)$.}
\label{fig:pi1}
\end{center}
\end{figure}

Similarly
\[\gamma_2'=\alpha_1\alpha_2\alpha_1\alpha_3,\qquad\gamma_3'=\alpha_2\alpha_3\alpha_2\alpha_1.\]
If we assign orientations to the ascending and descending manifolds as indicated by the arrows in Figures \ref{fig:morse} and \ref{fig:N-handlebody} then the intersection number of the loop $\alpha_i$ and the descending manifold $D_j$ of the index two critical point $x_j$ is $\delta_{ij}$. Therefore the only non-vanishing Morse differentials are
\begin{align}
\nonumber dx_1'&=x_1+x_2+2x_3\\
\nonumber dx_2'&=2x_1+x_2+x_2\\
\label{eq:morse-diff}dx_3'&=x_1+2x_2+x_3.
\end{align}
Recall that the Morse function is invariant under the left action of $\ZZ/3\subset\Gamma_{\Delta}$ on $SU(2)/\Gamma_{\Delta}$. The choice of orientations is also symmetric. This action cyclically permutes $x_1,x_2,x_3$ which accounts for the cyclic symmetry of the Morse complex.

\section{Holomorphic discs on $L_{\Delta}$}

We now proceed to find all the $J$-holomorphic discs with boundary on $L_{\Delta}$ we need for the calculation of Floer cohomology, where $J$ is the $SU(2)$-invariant K\"{a}hler complex structure.

\subsection{Axial discs}

The possible primitive $\Delta$-admissible homomorphisms $R\colon\RR\to SU(2)$ fall into three classes according to whether the order of $R(2\pi)$ is $2$, $4$ or $6$. Order $4$ will yield axial Maslov 2 discs; order $6$ will yield axial Maslov 4 discs.

\begin{exm}[Axial Maslov 2 discs]\label{exm:maslov2}
Consider the homomorphism $R_1(e^{i\theta})=\exp(\theta\sigma_3/4)$. This acts on the triangle $\Delta\in L_{\Delta}$ by rotating it through an angle $\theta/2$ around the $z$-axis. After an angle $\theta=2\pi$ the triangle $\Delta$ has moved around a loop in $L_{\Delta}$ representing a generator of $H_1(L_{\Delta};\ZZ)$, swapping the two vertices $(0,\pm\sqrt{3}/2,-1/2)$. This loop bounds the axial holomorphic disc $u_{R_1}$ represented by the shaded area in Figure \ref{fig:chiang-disc2}. There are three discs like this passing through $\Delta$, corresponding to the $\Delta$-admissible homomorphisms
\[R_1(\theta)=\exp(\theta\sigma_3/4),\ R_2(\theta)=\exp(\theta(\sigma_2\sqrt{3}-\sigma_3)/8),\ R_3(\theta)=\exp(\theta(\pm\sigma_2\sqrt{3}-\sigma_3)/8)\]
around the axes through the three vertices of $\Delta$. It follows from Corollary \ref{cor:axial-maslov2} that these are all of the Maslov 2 discs through $\Delta$.
\end{exm}
Similarly we see that
\begin{lma}\label{class2}There are precisely three Maslov 2 discs through any point $g\Delta$, corresponding to the $g\Delta$-admissible homomorphisms $gR_ig^{-1}$, $i=1,2,3$.\end{lma}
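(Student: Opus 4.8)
The plan is to deduce Lemma \ref{class2} from Example \ref{exm:maslov2} by exploiting the $SU(2)$-equivariance of the whole setup. The key observation is that the moduli space $\mM_{0,1}(J,\beta)$ of once-marked Maslov 2 discs carries an $SU(2)$-action, and that for each class $\beta$ the evaluation map $\ev\colon\mM_{0,1}(J,\beta)\to L_{\Delta}$ is a local diffeomorphism onto which $SU(2)$ acts transitively on components (this is exactly the content of the lemma preceding Lemma \ref{lma-rot-sym}). Combined with Corollary \ref{cor:axial-maslov2}, which tells us every Maslov 2 disc is axial, Example \ref{exm:maslov2} has already pinned down the complete fibre over the basepoint $\Delta$: there are precisely three such discs, the $u_{R_i}$.

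First I would fix an arbitrary point $g\Delta\in L_{\Delta}$ and use transitivity of the $SU(2)$-action to transport the known discs at $\Delta$ over to $g\Delta$. If $u$ is an $R$-axial disc through $\Delta$, meaning $u(e^{i\theta}z)=R(\theta)u(z)$ with $u(1)=\Delta$, then $g\cdot u$ (the disc $z\mapsto g\,u(z)$) satisfies $(g\cdot u)(e^{i\theta}z)=gR(\theta)u(z)=gR(\theta)g^{-1}(g\cdot u)(z)$, so $g\cdot u$ is axial for the conjugated homomorphism $gRg^{-1}$ and passes through $g\Delta$. One checks that $gRg^{-1}$ is $g\Delta$-admissible: indeed $(gRg^{-1})(2\pi)=gR(2\pi)g^{-1}\in gK_\Delta g^{-1}=K_{g\Delta}$, the stabiliser of $g\Delta$, and primitivity is preserved under conjugation. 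Thus the three discs $g\cdot u_{R_i}$ through $g\Delta$ correspond precisely to the admissible homomorphisms $gR_ig^{-1}$ for $i=1,2,3$.

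It remains to argue that these three are \emph{all} of the Maslov 2 discs through $g\Delta$, i.e.\ that the action produces no more and no fewer. Since $\ev$ is a local diffeomorphism, the fibre $\ev^{-1}(g\Delta)$ is discrete, and because $SU(2)$ acts transitively on components of $\mM_{0,1}(J,\beta)$ and equivariantly covers the transitive action on $L_{\Delta}$, the map $g\mapsto (g\cdot u_{R_i},z)$ identifies the fibre over $g\Delta$ with the image of the fibre over $\Delta$ under the stabiliser-coset action; in particular $g$ carries $\ev^{-1}(\Delta)$ bijectively onto $\ev^{-1}(g\Delta)$. As the former has exactly three elements by Example \ref{exm:maslov2}, so does the latter. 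Finally, since all discs in question are automatically somewhere injective (being Maslov 2, by the remark following Corollary \ref{cor:nomuleq1}), distinct admissible homomorphisms $gR_ig^{-1}$ yield genuinely distinct discs.

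The main subtlety to watch is bookkeeping between marked and unmarked moduli spaces. The count of \emph{discs} through a point is cleanest to phrase on $\mM_{0,1}(J,\beta)$ via $\ev^{-1}(g\Delta)$, and one must be careful that reparametrisation has already been quotiented out so that the three homomorphisms $gR_ig^{-1}$ index three distinct points of the fibre rather than three reparametrisations of one disc. This is guaranteed because the $R_i$ rotate about the three geometrically distinct vertex-axes of $\Delta$, so no element of $PSL(2,\RR)$ intertwines them; conjugating by $g$ preserves this distinctness. Granting that, the lemma follows immediately.
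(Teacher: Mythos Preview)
Your proof is correct and follows the same equivariance idea that the paper leaves implicit with its one-word ``Similarly''. The only remark is that you work harder than necessary in the middle paragraph: you do not need transitivity on components of $\mM_{0,1}(J,\beta)$ or any covering-space language, since for a fixed $g\in SU(2)$ the map $[u,z]\mapsto[g\cdot u,z]$ is already a bijection of $\mM_{0,1}(J,\beta)$ to itself intertwining $\ev$ with left translation by $g$, hence carries $\ev^{-1}(\Delta)$ bijectively onto $\ev^{-1}(g\Delta)$.
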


\begin{figure}
\begin{center}
\includegraphics[width=130px]{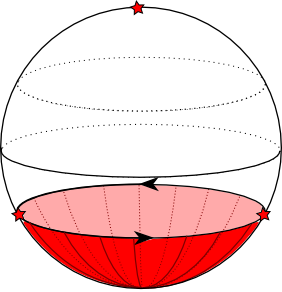}
\caption{A Maslov 2 disc $u\colon (D,\partial D)\to (\cp{3},L_{\Delta})$ passing through $\Delta$. The two vertices $(0,\pm\sqrt{3}/2,-1/2)$ move toward the south pole as $z\in D$ moves toward the origin and they rotate along the arrows as $z\in\partial D$ moves around the boundary.}
\label{fig:chiang-disc2}
\end{center}
\end{figure}

\begin{exm}[Axial Maslov 4 discs]\label{exm:maslov4}
Consider the homomorphism $R(\theta)=\exp(\theta\sigma_1/6)$. This acts on the triangle $\Delta\in L_{\Delta}$ by rotating it through an angle $\theta/3$ around the $x$-axis. After an angle $\theta=2\pi$ the triangle $\Delta$ has moved around a loop in $L_{\Delta}$ representing the element of order two in $H_1(L_{\Delta};\ZZ)$, cyclically permuting the three vertices of $\Delta$. This loop bounds the axial holomorphic disc $u_R$ represented by the shaded area in Figure \ref{fig:chiang-disc4} (drawn after rotating to make the $x$-axis vertical for clarity). There are two discs like this passing through $\Delta$, the other corresponding to $R(\theta)=\exp(-\theta\sigma_1/6)$.
\end{exm}

\begin{figure}
\begin{center}
\includegraphics[width=130px]{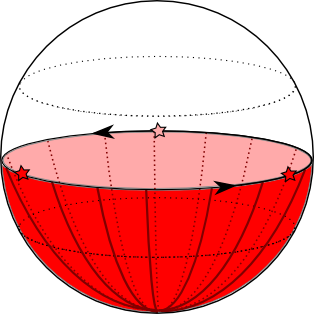}
\caption{A Maslov 4 disc $u\colon (D,\partial D)\to (\cp{3},L_{\Delta})$ passing through $\Delta$. Note that for clarity this is drawn after a rotation to make the $x$-axis vertical. The three vertices of $\Delta$, denoted by stars, are allowed to rotate in the direction of the arrows around the vertical axis through $2\pi/3$, tracing out a loop in $L_{\Delta}$. This loop bounds a holomorphic disc: as we move towards the centre of the disc the triple of points move together towards the south pole, tracing out the shaded region.}
\label{fig:chiang-disc4}
\end{center}
\end{figure}

\subsection{Maslov 4 discs through $m'$ and $m$}

Recall that $m'=\Delta$ and $m=\exp(\pi\sigma_1/6)\Delta$ are the minimum and maximum respectively of our Morse function. The count of Maslov 4 discs passing through these two points will be crucial in determining the Floer differential. The aim of this section is to prove:

\begin{prp}
There are two Maslov 4 discs with boundary on $L_{\Delta}$ passing through both $m'$ and $m$. They are precisely the axial discs constructed in Example \ref{exm:maslov4}.
\end{prp}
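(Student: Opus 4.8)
The plan is to prove existence by exhibiting the two discs of Example~\ref{exm:maslov4}, and then uniqueness by showing that every Maslov 4 disc through $m'$ and $m$ is axial, after which a short enumeration of admissible homomorphisms closes the count. The geometric input is that here $X_{\Delta}=\cp{3}=\PP(\Sym^3 V)$, the anticanonical divisor $Y_{\Delta}$ is the tangent variety of the rational normal curve $N_{\Delta}$ (a twisted cubic), and $N_{\Delta}$ is an $SU(2)$-invariant complex curve of codimension two disjoint from $L_{\Delta}$; thus $N_{\Delta}$ is precisely the submanifold $Z$ to which Corollary~\ref{cor-maslov4-symmetry} applies. Throughout I interpret ``passing through $m'$ and $m$'' via the two boundary marked points of $\mM^*_{0,2}(J,\beta)$, which is the form relevant to the pearl complex.

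For existence I would note that the axial discs $u_{R^{\pm}}$ of Example~\ref{exm:maslov4}, with $R^{\pm}(\theta)=\exp(\pm\theta\sigma_1/6)$ rotating about the three-fold axis of $\Delta$, send $z=1$ to $\Delta=m'$ and $z=-1$ to $R^{\pm}(\pi)\Delta$. Since $R^{+}(\pi)\Delta=\exp(\pi\sigma_1/6)\Delta=m$ by the definition of $m$, and $R^{-}(\pi)\Delta=m$ because $R^{+}(\pi)^{-1}R^{-}(\pi)=\exp(-\pi\sigma_1/3)\in\Gamma_{\Delta}$, both discs meet $m$ as well as $m'$ on their boundary. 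Each sends its centre $u(0)$ to one of the two perfect-cube configurations on $N_{\Delta}$. Since $H_2(X_{\Delta},L_{\Delta};\ZZ)\cong\ZZ$ the Maslov 4 class $\beta$ is unique, so both lie in the single moduli space $\mM^*_{0,2}(J,\beta)$.

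For uniqueness I would feed an arbitrary Maslov 4 disc $u$ through $m',m$ into Corollary~\ref{cor-maslov4-symmetry} with $Z=N_{\Delta}$. Since $u$ is Maslov 4 we have $[u]\cdot[Y_{\Delta}]=2$, and because $N_{\Delta}$ is the singular locus of $Y_{\Delta}$ any point of $u\cap N_{\Delta}$ contributes local intersection at least two to $[u]\cdot[Y_{\Delta}]$. Hence if $u$ meets $N_{\Delta}$ at all it does so in a single clean point and meets $Y_{\Delta}$ nowhere else, so Corollary~\ref{cor-maslov4-symmetry} makes $u$ axial. Normalising $u(1)=m'$, an axial Maslov 4 disc is $R$-axial for a $\Delta$-admissible $R$ with $R(2\pi)$ of order six (order four would instead give Maslov 2 as in Example~\ref{exm:maslov2}); the only order-six elements of $\Gamma_{\Delta}$ are $\exp(\pm\pi\sigma_1/3)$, both rotations about the single three-fold axis, so $R=R^{\pm}$ and $u=u_{R^{\pm}}$. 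Finally Lemma~\ref{lma:ev2-transversality} --- whose hypothesis that $T_{u(0)}\cp{3}$ carries no $R(\RR)$-fixed vector I would verify from the nonzero weights $2,4,6$ of $\Sym^3 V$ at a perfect-cube point --- shows the two-point evaluation map $\ev\colon\mM^*_{0,2}(J,\beta)\to L_{\Delta}\times L_{\Delta}$ is a submersion at $[u_{R^{\pm}},1,-1]$, so the two solutions are transversely cut out and isolated.

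The main obstacle is the conditional step above: proving that every Maslov 4 disc through $m'$ and $m$ actually meets $N_{\Delta}$. This is not generic --- a disc image and the codimension-two curve $N_{\Delta}$ have negative expected intersection dimension ($2+2-6<0$), so a generic Maslov 4 disc is disjoint from $N_{\Delta}$ --- and so must be extracted from the special position of $m'$ and $m$. My approach would be to read $u$ as a holomorphic family of binary cubics $p_z$ whose discriminant $\delta(z)$ vanishes to total order two, and to rule out the alternative in which $\delta$ vanishes only where $p_z$ has a mere double root (that is, $u$ meets $Y_{\Delta}\setminus N_{\Delta}$), forcing instead a triple-root degeneration on $N_{\Delta}$. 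The order-three symmetry $\exp(\pi\sigma_1/3)\in\Gamma_{\Delta}$ fixes both $m'$ and $m$, hence acts on the finite solution set, fixing $u_{R^{\pm}}$ and permuting any further discs in free triples; combining this with the degeneration analysis of $p_z$ dictated by the boundary passing through the two opposite triangles $m'$ and $m$ should eliminate the non-axial possibilities and complete the proof.
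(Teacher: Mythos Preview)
Your overall structure is right, and you correctly isolate the crux: showing that any Maslov 4 disc through $m'$ and $m$ must meet the twisted cubic $N_{\Delta}$, so that Corollary~\ref{cor-maslov4-symmetry} applies. But you do not actually prove this. The $\ZZ/3$-symmetry only tells you that non-axial solutions come in triples; it does not rule them out, and your ``degeneration analysis of $p_z$'' is a hope rather than an argument. Nothing about the boundary passing through $m'$ and $m$ obviously forces the discriminant of the associated family of binary cubics to acquire a triple-root degeneration rather than two double-root degenerations.

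The paper's proof of this step is substantially different and considerably more elaborate. One blows up $N_{\Delta}$ and uses the resulting $\cp{1}$-bundle $p\colon\widetilde{\cp{3}}\to\cp{2}$ (the secant/tangent-line projection, equivalently the projectivised Schwarzenberger bundle). The key geometric fact is that $m'$ and $m$ lie in the same fibre of $p|_{L_{\Delta}}\colon L_{\Delta}\to\rp{2}$, so the projected disc $p(\tilde{u})$, once doubled to a real rational curve in $\cp{2}$, would have a real node or be a double cover; an irreducible conic has no node, so $D(p(\tilde{u}))$ is a double cover of a real line. One then splits into two cases according to the location of the branch points (interior antipodal or both on $\rp{1}$) and eliminates each by intersection-theoretic computations inside the relevant Hirzebruch surface, tracking the divisors $E_{\Delta}$ and $\tilde{Y}_{\Delta}$ and the Maslov indices in the restricted $\cp{1}$-bundle. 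This is the content of Section~6.2, and it is where the real work lies; your proposal does not supply a substitute for it.
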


It is enough to prove that the Maslov 4 discs through both $m'$ and $m$ are axial.

The intersection pattern of a Maslov 4 disc with the divisor $Y_{\Delta}$ is one of the three following possibilities: the disc intersects $N_{\Delta}$ cleanly in a single point; the disc intersects $Y_{\Delta} \setminus N_{\Delta}$ transversely in two points; the disc intersects $Y_{\Delta} \setminus N_{\Delta}$ tangentially at one point (with multiplicity 2). This follows from positivity of intersections and the fact that the Maslov number is determined by the relative homological intersection of the disc with the anticanonical divisor $Y_{\Delta}$ which has a cuspidal singularity along $N_{\Delta}$. If the disc intersects $N_{\Delta}$ cleanly in a single point then it is axial (Corollary \ref{cor-maslov4-symmetry}). Our task is to rule out the other possibilities. We will do this by projecting to a lower-dimensional problem. This argument was inspired by Hitchin's paper \cite{hitchin}.

\subsubsection{Constructing a projection}

The one-parameter subgroups of $SU(2)$ act as rotations around a fixed axis, which is the same as a pair of antipodal points in $\cp{1}$. Given an axis $P$, its stabiliser in $\SL$ is $\pin_P$, isomorphic to the group $\pin$ whose definition we briefly recall. The group $O(2)$ has two double covers (central extensions)
\[1\to\ZZ/2\to{Pin}_{\pm}(2)\to O(2)\to 1\]
corresponding to whether the preimage of a reflection squares to the identity or to the nontrivial central element. If we fix a pair $P$ of antipodal points on $\cp{1}$ then their stabiliser in $SO(3)$ is a copy of $O(2)$ which we denote by $O(2)_P$. For instance, if $P=\{0,\infty\}$ then the reflections preserving $P$ are given by the matrices
\[\left(\begin{array}{ccc}
\cos\theta & \sin\theta & 0 \\
\sin\theta & -\cos\theta & 0\\
0 & 0 & -1
\end{array}\right)\in SO(3)\]
The preimage of such a matrix in ${Spin}(3)$ squares to the nontrivial central element in ${Spin}(3)$, so the preimage of $O(2)_P$ is isomorphic to ${Pin}_-(2)$ and written $Pin_-(2)_P$. We will write $Pin_-(2,\CC)_P\subset\SL$ for the complexification of ${Pin}_-(2)_P\subset SU(2)={Spin}(3)$. Note that if $P=\{0,\infty\}$, then
\[Pin_-(2,\CC)_P=\left\{ \left(\begin{array}{cc}
u & 0 \\
0 & u^{-1} 
\end{array}\right)\ :\ u\in\CC^*\right\} \cup 
\left\{\left(\begin{array}{cc}
0 & v \\
-v^{-1} & 0 
\end{array}\right)\ :\ v\in\CC^*\right\}. \] 

For convenience we will rotate so that $\Delta$ consists of the third roots of unity, which sit in $S^2$ in a plane orthogonal to the axis through $P_{\Delta}=\{0,\infty\}$. The group $\Gamma_{\Delta}$ is then contained in $\pin_{P_{\Delta}}$ so there is a map
\[\SL/\Gamma_{\Delta}\to\SL/\pin_{P_{\Delta}}.\]
This yields a rational map $\cp{3}\to\cp{2}$ and it extends to a dominant regular map $p\colon\widetilde{\cp{3}}\to\cp{2}$ from the blow-up of $\cp{3}$ along the twisted cubic curve $N_{\Delta}$. This map can be understood as follows.

It is well-known that through every point of $\cp{3}\setminus N_{\Delta}$ there is a unique secant or tangent line of $N_{\Delta}$, see for example {\cite[Chapter XII, Theorem 2]{SempleKneebone}}. This line intersects $N_{\Delta}$ in two points (counted with multiplicity) so we get a map $\cp{3}\setminus N_{\Delta}\to\OP{Sym}^2\left(N_{\Delta}\right)$. Although, we will not need it, an explicit form of this rational map is given by: 
\begin{align*} \cp{3} &\to \cp{2} \\
 [u_0:u_1:u_2:u_3] &\to [ u_0 u_2 - u_1^2 : u_0 u_3 - u_1 u_2 : u_1 u_3 - u_2^2]  
\end{align*}
Through each point of $N_{\Delta}$ there is a $\cp{1}$ of secant or tangent lines which are separated by the blow-up. Indeed, blowing-up $N_\Delta$ we obtain the map $p$ which is a $\cp{1}$-bundle over $\cp{2}$.

Under this map the Lagrangian $L_{\Delta}$ is sent to $SU(2)/{Pin}_-(2)=\rp{2}$. Indeed, the restriction of the projection $p: L_{\Delta} \to \rp{2}$ is a circle bundle, where the fibre through $\Delta$ is $\{\exp(\theta\sigma_3)\Delta\ :\ \theta\in[0,2\pi/3]\}$. In particular the points $m'=1$ and $m=\exp(\pi\sigma_3/6)$ are in the same fibre. In fact, we have the following diagram:
\[ \xymatrix{
SU(2)/ C_6 \ar[r] \ar[d] &  SU(2)/ \Gamma_\Delta \ar[d]_{p} \\
SU(2)/S^1 = S^2
\ar[r]
& SU(2)/ Pin_{-} (2) = \rp{2}
} \]
from which one concludes that $L_{\Delta}$ is a circle bundle over $\rp{2}$ with Euler number $\pm 3$.

The divisor $Y_{\Delta}$ is the variety of tangent lines to $N_{\Delta}$ so its
proper transform $\tilde{Y}_{\Delta} \simeq \cp{1} \times \cp{1} $ projects to the locus of double points in
$\OP{Sym}^2\left(N_{\Delta}\right)$, that is the discriminant conic
$\delta\subset\cp{2}$. The exceptional divisor $E_{\Delta} \subset
\widetilde{\cp{3}}$ is such that the restriction \[ p|_{E_{\Delta}} : E_\Delta
\to \cp{2} \] is a double cover branched over $\delta \subset \cp{2}$, hence
$E_\Delta = \cp{1} \times \cp{1}$. One can easily check that (see
\cite{hitchin}) $\tilde{Y}_\Delta + E_\Delta$ is an anticanonical divisor of
$\widetilde{\cp{3}}$. In fact, $\widetilde{\cp{3}}$ is still Fano (it is number 27 in the Mori-Mukai list \cite{MM} of Fano 3-folds with $b_2=2$) and it follows
as before from the long exact sequence of the pair $(\widetilde{\cp{3}},L_\Delta)$, 
\[ 0 \to H_2(\widetilde{\cp{3}};\ZZ)= \ZZ^2 \to H_2(\widetilde{\cp{3}}, L_\Delta; \ZZ) \to H_1(L_\Delta;\ZZ) = \ZZ_4 \to 0 \] 
that $L_{\Delta} \subset \widetilde{\cp{3}}$ is a monotone Lagrangian.

\subsubsection{Lifting and projecting discs}

Now, given a holomorphic disc $u: (D^2, \partial D^2) \to (\cp{3}, L_\Delta)$,
we write \[ \tilde{u} : (D^2 , \partial D^2) \to (\widetilde{\cp{3}}, L_\Delta)
\]  for the holomorphic disc that is obtained by taking the proper
transform of $u$ to $\widetilde{\cp{3}}$. We write \[ p(\tilde{u}) : (D^2,
\partial D^2) \to (\cp{2}, \rp{2}) \] for the projection of $\tilde{u}$ via the
map $p$. Of course, if $u$ misses the twisted cubic $N_\Delta$, one can
directly project via the map $\cp{3} \setminus N_{\Delta} \to \cp{2}$. 

The Maslov index of $u$, $\tilde{u}$ and $p(\tilde{u})$ can be understood via the following formulae :
\[ \mu (u) = 2 [u] \cdot Y_\Delta, \ \ \ \mu(\tilde{u}) = 2 [\tilde{u}] \cdot (\tilde{Y}_\Delta + E_\Delta), \ \ \ \mu(p(\tilde{u})) = 3 [p(\tilde{u})] \cdot \delta \]
These hold because $- K_{\cp{3}} = Y_\Delta$ , $- K_{\widetilde{\cp{3}}} = \tilde{Y}_\Delta + E_\Delta$ and $-K_{\cp{2}} = (3/2) \delta$. 

Furthermore, if $\pi : \widetilde{\cp{3}} \to \cp{3}$ is the blow-down map, we have 
\[ -K_{\widetilde{\cp{3}}} = -\pi^* (K_{\cp{3}}) - E_\Delta \]
Hence, it follows that $\pi^*(Y_\Delta) = \tilde{Y}_\Delta + 2E_\Delta$ which implies:
\begin{equation} \mu(u) = 2 [u] \cdot Y_\Delta = 2 [\tilde{u}] \cdot (\tilde{Y}_\Delta + 2E_\Delta) = \mu(\tilde{u})+ 2 [\tilde{u}] \cdot E_\Delta \end{equation}

Finally, note that $p: \widetilde{\cp{3}} \to \cp{2}$ is flat, therefore
$p^*(\delta) = \tilde{Y}_\Delta$, hence we have the formula: \[ [\tilde{u}] \cdot
\tilde{Y}_\Delta = [p(\tilde{u})] \cdot \delta \]
This implies:
\begin{equation} \label{maslovproj} \mu(\tilde{u}) = 2 [\tilde{u}] \cdot E_\Delta + \frac{2}{3} \mu(p(\tilde{u}))  \end{equation}

\begin{exm}\label{exm:m2}
Maslov 2 discs $u$ in $(\cp{3}, L_\Delta)$ intersect $Y_\Delta \setminus
N_\Delta$ at a unique point transversely, hence their projections $p(\tilde{u})$
intersect the conic $\delta$ transversely at a unique point. As $\rp{2}$ is the
fixed point locus of an anti-holomorphic involution in $\cp{2}$, such discs can
be doubled to rational curves, $D(p(\tilde{u}))$, which intersect the conic
$\delta$ at 2 points, hence are necessarily (real) lines in $\cp{2}$.
Conversely, it is easy to see from our classification of Maslov 2 discs $u:
(D^2, \partial D^2) \to (\cp{3}, L_\Delta)$ from Lemma \ref{class2} that either
half of any real line in $\cp{2}$ is the projection of a unique Maslov 2 disc.  
\end{exm}

\subsubsection{Projection of a non-axial Maslov 4 disc}

To show that there is no non-axial Maslov 4 disc passing through $m'$ and $m$ we will assume there is such a disc and derive a contradiction.

We have argued above that any non-axial Maslov 4 disc $u$ in $\cp{3}$ with
boundary on $L_\Delta$ intersects $Y_{\Delta}$ in a subset of $Y_\Delta \setminus
N_\Delta$; it intersects in either two points transversely or one point
tangentially. We can project such a disc to $\cp{2}$ and the projected disc
$p(\tilde{u})$ therefore intersects $\delta$ in either two points transversely or
one point tangentially.


By assumption, the boundary of our Maslov 4 disc passes through $m'$ and
$m$. Under the projection $p$, these are mapped to the same point
in $\cp{2}$ since they are contained in the fibre of the circle fibration
$p:L_{\Delta} \to \rp{2}$. Thus the doubled curve $D(p(\tilde{u}))$ either has a
real double point or is a double cover. However, $D(p(\tilde{u}))$ is irreducible,
and an irreducible conic cannot have a double point. Therefore, the only
remaining possibility is that $D(p(\tilde{u}))$ is a double cover. Note that
$D(p(\tilde{u}))$ intersects the conic $\delta$ at 4 points (counted with
multiplicity). Hence, it has to be a double cover of a real line $l \simeq
\cp{1}$. Now, \[ D(p(\tilde{u})) : (\cp{1}, \rp{1}) \to (l, l \cap \rp{2}) \subset
(\cp{2}, \rp{2}) \] is a double covering map which is equivariant with respect
to the antiholomorphic involutions. From the Riemann-Hurwitz formula, it is easy to
compute ($2 = 4 - (2-1) + (2-1)$), that there must be exactly 2 branch points and these will have
multiplicity 2. There are two distinct ways this can happen: 

\begin{enumerate}[(a)]
\item The branch points are antipodal and lie in $\cp{1} \setminus \rp{1}$ 
\item The branch points can be any two distinct points in $\rp{1}$
\end{enumerate}	

In case (a), we will show that $u$ is a double cover of an (axial) Maslov 2
disc in $(\widetilde{\cp{3}}, L_\Delta)$, hence it cannot have boundary passing
through $m'$ and $m$ - a fact that follows from our classification
of Maslov 2 discs as we know that the boundaries of Maslov 2 discs are given
by sections of the circle bundle $p: L_\Delta \to \rp{2}$ over real lines in
$\rp{2}$. Finally, we will argue that case (b) cannot occur for any Maslov
$4$ disc $u$ which misses $N_\Delta$. 

\emph{Case (a):} In this case, there is a real line $l$, one half of which is a disc double-covered by $p(\tilde{u})$. As in Example \ref{exm:m2} there is a unique axial Maslov 2 disc $v$ on $(\cp{3},L_{\Delta})$ whose proper transform $\tilde{v}$ projects to this disc in $l$. Therefore we write $p(\tilde{v})\subset l$ for the disc on $(\cp{2},\RR\PP^2)$ which is double-covered by $p(\tilde{u})$. The disc $p(\tilde{v})$ intersects
the discriminant conic $\delta \subset \cp{2}$ at a unique point and the double
$D(p(\tilde{v}))$ is the real line $l$.

Consider the total space of the $\cp{1}$ fibration restricted to the preimage
of the disc $p(\tilde{v})$. Call this $F =
p^{-1}(p(\tilde{v}))$. The intersection $L_\Delta \cap F$ is a Lagrangian in $F$ that is a circle
bundle in $p^{-1} (p(\partial {\tilde{v}}))$. It is easy to see that this is a
Lagrangian Klein bottle $K$ in $F$ as the monodromy is a reflection on the
circle fibre. Thus, $F$ is a $\cp{1}$ fibration over $D^2$ (hence
holomorphically it is $D^2 \times \cp{1}$) and $K$ is a Lagrangian Klein bottle
in $E$ which fibres over $\partial D^2$. 

Now observe that since $\tilde{v}$ is embedded, we have a short exact sequence of Riemann-Hilbert pairs (suppressing the totally real subbundle from the notation):
\begin{equation}\label{eq:rhseq1} 0 \to T \tilde{v}(D^2) \to  T \widetilde{\cp{3}}|_{\tilde{v}(D^2)} \to \nu_{\widetilde{\cp{3}}} (\tilde{v}(D^2)) \to 0\end{equation}
where $\nu_{\cp{3}}$ is the Riemann-Hilbert pair obtained by taking the normal bundle to $\tilde{v}(D^2)$ in $\widetilde{\cp{3}}$ and $\tilde{v}(\partial D^2)$ in $L_\Delta$. This exact sequence implies
\[2=\mu(T\widetilde{\cp{3}}|_{\tilde{v}(D^2)})=\mu(T\tilde{v}(D^2))+\mu(\nu_{\widetilde{\cp{3}}}(\tilde{v}(D^2))=2+\mu(\nu_{\widetilde{\cp{3}}}(\tilde{v}(D^2))\]
so $\mu(\nu_{\widetilde{\cp{3}}}(\tilde{v}(D^2))=0$.

Furthermore, since $p(\tilde{v})$ is embedded, we have a short exact sequence of Riemann-Hilbert pairs:
\begin{equation}\label{exact1} 0 \to \nu_{F} (\tilde{v}(D^2)) \to \nu_{\widetilde{\cp{3}}}(\tilde{v}(D^2)) \to \nu_{\cp{2}} (p(\tilde{v})(D^2)) \to 0 \end{equation}
where $\nu_{F}$ and $\nu_{\cp{2}}$ denote the normal (Riemann-Hilbert) bundles
in $F$ and $\cp{2}$ respectively. (The real subbundle of the complex normal
bundle is given as the normal bundle to $\partial{D}^2$ in $L_{\Delta} \cap F$
and $\rp{2}$ respectively.) Now, $p(\tilde{v})$ is a real line, hence as a disc
in $\cp{2}$ it has Maslov index 3 and, since it is embedded, we have a short
exact sequence of Riemann-Hilbert pairs
\begin{equation}\label{exact2} 0 \to T(p(\tilde{v})(D^2)) \to T\cp{2}|_{p(\tilde{v})(D^2)} \to \nu_{\cp{2}} (p(\tilde{v})(D^2)) \to 0 \end{equation}
which implies
\[3=\mu(T\cp{2}|_{p(\tilde{v})(D^2)})=\mu(T(p(\tilde{v})(D^2)))+\mu(\nu_{\cp{2}} (p(\tilde{v})(D^2)))=2+ \mu(\nu_{\cp{2}} (p(\tilde{v})(D^2)))\]
so $\mu(\nu_{\cp{2}} (p(\tilde{v})(D^2))=1$. Finally, Equation \eqref{exact1} gives
\[0=\mu(\nu_{\widetilde{\cp{3}}}(\tilde{v}(D^2)))=\mu(\nu_{F} (\tilde{v}(D^2)))+\mu(\nu_{\cp{2}} (p(\tilde{v})(D^2)))\]
Therefore, $\mu(\nu_F(\tilde{v}(D^2)))=-1$. Considered as a disc inside $(F,K)$ the Riemann-Hilbert pair of $\tilde{v}(D^2)$ fits into an exact sequence
\[0\to T(\tilde{v}(D^2))\to TF|_{\tilde{v}(D^2)}\to \nu_F(\tilde{v}(D^2))\to 0\]
hence the Maslov index of the holomorphic
disc $\tilde{v}$ viewed in $(F, K)$ is
\[\mu(TF|_{\tilde{v}(D^2)})=\mu(T(\tilde{v}(D^2)))+\mu(\nu_F(\tilde{v}(D^2)))=2-1=1.\]

On the other hand, we can double the projective bundle $(F,K) \to (D^2,
\partial D^2)$ to a $\cp{1}$-bundle, $D(F) \to \cp{1}$, with an antiholomorphic
involution $\iota : D(F) \to D(F)$ such that $Fix(\iota) = K$.
\footnote{Dangerous bend: $D(F)$ no longer embeds in $\widetilde{\cp{3}}$.} The
construction of $(D(F), \iota)$ can be described as follows: recall that $F$ is
a holomorphically trivial $\cp{1}$-bundle over $D^2$ and $K$ fibres
over $\partial D^2$ with fibres given by equatorial circles in a fibre of $F$.
Therefore, $K$ defines a fibrewise antiholomorphic involution $\iota_K$ on the
restriction of $F$ over $\partial D^2$. To construct $D(F)$ one takes another
copy of $(F,K)$ with the complex conjugate holomorphic structure, which we write as  $(\overline{F},K)$, and glues them above $\partial D^2$ using $\iota_K$ which then gives us a holomorphic $\cp{1}$-bundle over $\cp{1}$:  \[ D(F)
= (F, K) \cup_{\iota_K} (\overline{F}, K) \] It is now clear that the
involution $\iota_K$ extends to $D(F)$ to give an involution $\iota$ with $Fix(\iota)= K$ which acts on the base $\cp{1}$ by the usual complex conjugation.

The holomorphic disc $\tilde{v}(D^2)$ doubles to give a section $D(\tilde{v})$ of
$D(F)$ whose self-intersection number is equal to $\mu(\nu_F(\tilde{v}(D^2)))=-1$. Thus $D(F)$ is the Hirzebruch surface $\PP(\mathcal{O} \oplus \mathcal{O}(1))$. The homology of $D(F)$ is therefore spanned by two classes $s,f$ with $s^2=-1$, $f^2=0$, $f\cdot s=0$ and $[D(\tilde{v})]=s$.

Crucially, $E_\Delta \cap F$ also doubles since $E_\Delta$ intersects the fibres over $\partial{D^2}$ at antipodal points, which are exchanged by $\iota_K$. Thus we obtain a divisor $D(E_\Delta \cap F)$ in $D(F)$ which intersects a generic fibre in two points. It is also disjoint from $D(\tilde{v})$ because Maslov 2 discs in $(\cp{3},L_{\Delta})$ are disjoint from the twisted cubic $N_{\Delta}$ and because $E_{\Delta}$ is the exceptional divisor for blow-up along $N_{\Delta}$. These intersection numbers imply that $[D(E_{\Delta}\cap F)]=2f+2s$.

Finally, the holomorphic disc $\tilde{u}$ doubles to give a holomorphic curve $D(\tilde{u})$. This curve is disjoint from $D(E_{\Delta}\cap F)$ because the curve $u$ is disjoint from the twisted cubic. Therefore, if $[D(\tilde{u})]=as+bf$, we have
\[(as+bf)\cdot(2f+2s)=0=2b\]
so $[D(\tilde{u})]=as$. Note that $a>0$ as $D(\tilde{u})$ intersects the fibre $f$ positively. The curves $D(\tilde{u})$ and $D(\tilde{v})$ have negative intersection
\[D(\tilde{u})\cdot D(\tilde{v})=-a\]
so their images must coincide by positivity of intersections. In particular, the image of $\tilde{u}$ coincides with the image of $\tilde{v}$ and so $\tilde{u}$ is a double cover of $\tilde{v}$ as required.

\emph{Case (b):} In this case, let $Q$ be the preimage $p^{-1}(l)$ and $K=L_{\Delta}\cap Q$. Formally, the argument is similar to case (a) except that instead of viewing the disc $\tilde{u}$ as a map to $(D(F),K)$, we will see it as a map to $(Q,K)$. We recall that the $\cp{1}$-bundle, $p:
\widetilde{\cp{3}} \to \cp{2}$ arises from a construction of Schwarzenberger
\cite{schwarzenberger} of rank 2 vector bundles on $\cp{2}$. Namely, let \[ q :
\cp{1} \times \cp{1} \to \cp{2} \] be the double branched covering over the conic
$\delta \in \cp{2}$. Schwarzenberger considers the rank 2 bundle \[ \mathcal{E}
= q_* (\mathcal{O}(3,0))  \] where $\mathcal{O}(3,0)$ denotes the unique
holomorphic line bundle of bidegree $(3,0)$ on $\cp{1} \times \cp{1}$. As is
explained in \cite{hitchin} (see also \cite{Nak}), the projectivisation
$\PP(\mathcal{E})$ of this bundle is the $\cp{1}$-bundle $\widetilde{\cp{3}}
\to \cp{2}$. It is also proved in \cite[Proposition 8]{schwarzenberger} that if one restricts $\mathcal{E}$ to a line $l \subset \cp{2}$, then:	
$$\mathcal{E}|_{l} \simeq \begin{cases} \mathcal{O}(2) \oplus \mathcal{O} & \text{if \  } l \text{\ is 
	tangent to  }\delta \\ \mathcal{O}(1)\oplus \mathcal{O}(1) & \text{otherwise.}
\end{cases}$$
Since we have defined $Q$ as the preimage of a real line $l$ (which cannot be tangent to $\delta$), it follows that $Q$ is isomorphic to the projectivisation $\PP(\mathcal{O}(1) \oplus \mathcal{O}(1)) \simeq \cp{1} \times \cp{1}$. 
Therefore we can view $\tilde{u}$ as a holomorphic map \[ \tilde{u} : (D^2 ,
\partial D^2)  \to ( \cp{1} \times \cp{1} , K) \subset (\widetilde{\cp{3}} ,
L_\Delta).\]  $K$ is again a Lagrangian Klein bottle, as the monodromy of
$L_\Delta \cap Q \to S^1$ is a reflection on the circle fibre. 

First note that since $p(\tilde{u})$ is
not immersed (at the two boundary points), we cannot immediately apply the
Maslov index computation from case (a) as we do not have the exact sequences (\ref{exact1}), (\ref{exact2}). On the other hand, $p(\tilde{u})$ is a
smooth limit of embedded real conics $c_t$ in $\cp{2}$ (see Figure \ref{fig:case-b}). (Explicitly, in suitable coordinates, it can be
exhibited as a limit of a family of the form $c_t = \{ (x:y:z) \in \cp{2} :  x^2/a^2 + y^2/t^2 =z^2 \} $ as $t \to 0$). 
Therefore, \[ \mu(\nu_{\widetilde{\cp{3}}/Q}(\tilde{u})) = \mu(\nu_{\cp{2}}(c_t)) = \mu(T\cp{2}|_{c_t}) - \mu(Tc_t) = 6- 2=4 \]
\begin{figure}[htb]
\begin{center}
\includegraphics{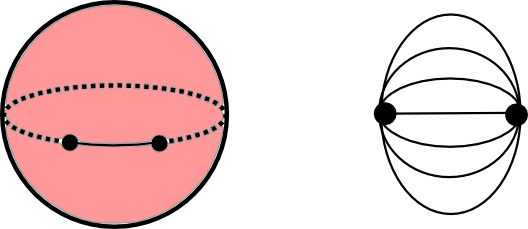}
\caption{Case (b). Left: The sphere is $\cp{1}$ and the dashed equator is $\RR\PP^1\subset\cp{1}$. The shaded area is the image of $p(\tilde{u})$, a holomorphic disc covering the whole of $\cp{1}$; its boundary circle maps two-to-one onto an interval in $\RR\PP^1$ (the non-dashed interval in the figure) with two critical points. Right: This is a picture of the boundaries of discs inside $\RR\PP^2$; $p(\tilde{u})$ arises as a limit of real ellipses $c_t$ whose boundaries are embedded.}
\label{fig:case-b}
\end{center}
\end{figure}
Now as in case (a),  we can compute $\mu(\nu_Q (\tilde{u}(D^2)) = -2$. Hence, the Maslov index of the (embedded) holomorphic disc $\tilde{u}$ viewed in $(Q, K)$ is 0. We write this as: \[ \mu_Q (\tilde{u}) = 0. \]
The long exact sequence of
the pair $(Q,K)$ gives
\[ 0 \to H_2(Q; \ZZ)= \ZZ^2 \to H_2(Q, K; \ZZ) \to H_1(K;\ZZ) =
\ZZ \oplus \ZZ_2 \to 0 .\] 

Now, let $\tilde{v}$ be an (axial) Maslov 2 holomorphic disc such that
$p(\tilde{v})$ is one half of the base real line $l$. (As we have
mentioned several times, the existence of this follows from our classification
of Maslov 2 discs.) Let $\tilde{w}$ be a Maslov 2 disc that lies on a fibre of
the projection $p|_{Q} : Q \to \cp{1}$ . Such discs $\tilde{w}$ are obtained as
the proper transform of axial Maslov 4 discs $w$ in $\cp{3}$. We now observe
that $\partial{\tilde{v}}$ and $\partial{\tilde{w}}$ project to generators of
$H_1(K)$ - they give generators for the summands $\ZZ$ and $\ZZ_2$ respectively.
We can compute, as in case (a), that the Maslov index $\mu_Q(\tilde{v}) =1$ when $\tilde{v}$ is viewed as a holomorphic disc mapping to $(Q,K)$. 
Let $f \simeq \cp{1}$ be a fibre of the projection $p|_{Q} : Q \to \cp{1}$ and
$s \simeq \cp{1}$ be a section such that the anticanonical divisor of $Q$ is
given by \[ -K_Q = 2f +2s \] 

We infer from the above exact sequence that the
elements $\tilde{v},\tilde{w},f$ and $s$ generate $H_2(Q, K ; \ZZ)$. From the description of $-K_Q$, we can compute that we have $\mu_Q([f]) = \mu_Q([s])= 4$ and $\mu_Q(\tilde{w}) = 2$. The latter follows because $\tilde{w}$ is a disc that lies in the fibre of the projection $p|_{Q}: Q \to \cp{1}$ and since $K$ intersects this fibre at the equator, the disc $\tilde{w}$ can be reflected in the fibre. Thus, we have $2 \mu_Q(\tilde{w}) = \mu_Q ([f])=4$. 

Now, we can write 
\[ [\tilde{u}] = a [f] + b [s] + c [\tilde{v}] + d [\tilde{w}] \]
for some integers $a,b,c,d \in \ZZ$. We will now use the fact that $\mu_Q(\tilde{u})=0$ and that $\tilde{u}$ does not intersect the divisor $E_\Delta \cap Q$ to arrive at a contradiction. Indeed we have
\[ 0= \mu_Q (\tilde{u}) = 4a + 4b + c +2d . \]
To compute $[\tilde{u}] \cdot [(E_\Delta \cap Q)]$, observe first that from the geometric situation, we deduce immediately that:
\[ [f] \cdot (E_\Delta \cap Q) = 2 , \ \  [\tilde{v}] \cdot (E_\Delta \cap Q) = 0 , \ \ [\tilde{w}] \cdot (E_\Delta \cap Q) =1 \]
It remains to compute $[s] \cdot (E_\Delta \cap Q)$. To this end, we know that $\mu_Q([s])= 4$ and $s$ and $p(s) \simeq \cp{1}$ are embedded hence we can use the short exact sequence of Riemann-Hilbert bundles
\[0\to TQ|_s\to T\widetilde{\cp{3}}|_s\to \nu_{\cp{2}}(p(s))\to 0\]
to compute that $\mu([s])= 6$ when $s$ is viewed as a holomorphic map to $\widetilde{\cp{3}}$. Next, we use the Formula (\ref{maslovproj}) to compute:
\[ 6 = \mu([s]) = 2 [s] \cdot (E_\Delta\cap F) + 4 \]
so $[s]\cdot(E_{\Delta}\cap F)=1$. Therefore, we have:
\[ 0= [\tilde{u}] \cdot [(E_\Delta \cap Q)] = 2a + b +d \]
Hence, putting $\mu_Q(\tilde{u})= 0 $ and $[\tilde{u}] \cdot [(E_\Delta \cap Q)] =0$ give us the equality 
\[ 2b +c = 0\] 
On the other hand, since $p(\tilde{u})$ is injective along its interior, in particular , it intersects the fibres above any point in $ l \setminus (l \cap \rp{2})$ at a unique point. Let $f_0$ and $f_{\infty}$ represent two such fibres corresponding to the two components of $l \setminus (l \cap \rp{2})$. Calculating  $[\tilde{u}] \cdot [f_0]=[\tilde{u}] \cdot [f_\infty] = 1$ gives the constraints:
\[ b+c =1 \text{\ \ and \ \ }  b =1 \]
which contradicts $2b+c=0$. Hence, we conclude that $\tilde{u}$ (and thus $u$) could not have existed.

\section{Floer cohomology}

\subsection{Eigenvalues of the first Chern class}

\begin{lma}\label{lma:modp}
Let $\kk$ be a field with characteristic $p\neq 2$. If $HF(L_{\Delta},L_{\Delta};\kk)\neq 0$ then $p=5$ or $p=7$.
\end{lma}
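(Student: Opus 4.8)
The plan is to deduce the statement from the eigenvalue criterion of Proposition \ref{prp:c1-eval}, combined with the explicit count of Maslov $2$ discs on $L_\Delta$ and the characteristic polynomial $\chi_\Delta(\lambda)=\lambda^4-256$ of quantum multiplication by $c_1$ on $\cp{3}$ recorded in Figure \ref{fig:c1-char-poly}. First I would verify that $L_\Delta$ satisfies the hypotheses of Proposition \ref{prp:c1-eval}: it is compact, and being the quotient $SU(2)/\Gamma_\Delta$ of a Lie group by a finite subgroup acting freely it is parallelisable, hence orientable and spin; it is monotone with minimal Maslov number $2$ by Lemma \ref{lma:monotone}. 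Thus if $HF(L_\Delta,L_\Delta;\kk)\neq 0$ with $\kk$ of characteristic $p\neq 2$, then $\mathfrak{m}_0(L_\Delta)$ is an eigenvalue of $c_1\star$ acting on $QH^*(\cp{3};\kk)$, i.e.\ a root of $\chi_\Delta$ over $\kk$.

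The next step is to pin down $\mathfrak{m}_0(L_\Delta)$. By Lemma \ref{class2} there are exactly three Maslov $2$ discs through each point of $L_\Delta$, so $\mathfrak{m}_0(L_\Delta)$ is a sum of three signs $\pm 1$ and is therefore an odd integer in $\{\pm 1,\pm 3\}$. To rule out $\pm 1$ I would invoke the cyclic symmetry: the subgroup $\ZZ/3\subset\Gamma_\Delta$ acts on $L_\Delta$ by left translation, preserving orientation, and cyclically permutes the three admissible homomorphisms $R_1,R_2,R_3$ of Example \ref{exm:maslov2} and hence the three discs. Since a $\ZZ/3$-action on the two-element set of spin structures must be trivial, this symmetry also preserves the chosen spin structure, so the three discs contribute with equal signs and $\mathfrak{m}_0(L_\Delta)=\pm 3$ (this is part (a) of the Main Theorem).

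Finally I would substitute this value into the characteristic polynomial. As $\mathfrak{m}_0(L_\Delta)=\pm 3$ lies in the prime subfield $\ZZ/p\subset\kk$ and $\chi_\Delta$ has integer coefficients, being an eigenvalue over $\kk$ is equivalent to $\chi_\Delta(\pm 3)\equiv 0$ in $\ZZ/p$:
\[ \mathfrak{m}_0(L_\Delta)^4-256 \equiv 3^4-256 = 81-256 = -175 = -5^2\cdot 7 \equiv 0 \pmod p. \]
Since $p\neq 2$ and $p\mid 175$, this forces $p=5$ or $p=7$, as claimed.

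The main obstacle is the middle step, namely establishing that $\mathfrak{m}_0(L_\Delta)=\pm 3$ rather than $\pm 1$; the weaker bound would enlarge the list of admissible primes to $\{3,5,7,17\}$. This rests on the symmetry argument that the three Maslov $2$ discs carry equal orientation signs, which requires the cyclic $\ZZ/3$-symmetry of the configuration $\Delta$ together with a careful check that this symmetry acts compatibly on the moduli space, its orientation, and the fixed spin structure.
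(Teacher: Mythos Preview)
Your proposal is correct and follows essentially the same route as the paper: invoke Proposition \ref{prp:c1-eval}, plug $\mathfrak{m}_0(L_\Delta)=\pm 3$ into the characteristic polynomial $\lambda^4-256$, and factor $-175=-5^2\cdot 7$. The paper's proof is in fact terser---it simply quotes $\mathfrak{m}_0(L_\Delta)=3$ from Lemma \ref{class2} without spelling out the sign argument---so your $\ZZ/3$-symmetry justification that the three discs contribute with equal sign is a welcome elaboration rather than a deviation.
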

\begin{proof}
By Proposition \ref{prp:c1-eval}, the Floer cohomology over a field is nonzero only if $\mathfrak{m}_0(L_{\Delta})$ is an eigenvalue of $c_1(\cp{3})$ acting by quantum product on $QH^*(\cp{3})$. Since the characteristic polynomial for the quantum action of $c_1(\cp{3})$ given in Figure \ref{fig:c1-char-poly} is $\lambda^4-256$ and since $\mathfrak{m}_0(L_{\Delta})=3$ we need to work in $\ZZ/p$ such that $3^4-256=-5^2\times 7\equiv 0\mod p$. Thus $p=5$ or $p=7$.
\end{proof}

\subsection{Computing the Floer differential}

Let us choose an orientation and a spin structure on $L_{\Delta}$. We use this choice to orient the moduli spaces of holomorphic discs.

We use the Biran-Cornea pearl complex \cite{BC0} to compute the Floer cohomology $HF(L_{\Delta},L_{\Delta};\ZZ)$. The cochain groups are generated by the critical points of our Morse function over $\ZZ$ and are $\ZZ/2$-graded by the parity of the Morse indices. The Floer differential of a Morse cochain $c$ is
\[d_{F}c=d_{M}c+\sum_b\pm P(c,b)b\]
where $d_M$ is the Morse differential, the sum is over critical points $b$ and the coefficient $P(c,b)$ counts pearly trajectories connecting $c$ to $b$. A pearly trajectory is a combination of upward Morse flow lines and holomorphic discs. The sign conventions (and orientations on the pearly moduli spaces) are worked out in \cite[Appendix A]{BC1}.

This definition presupposes a choice of Morse function, metric and almost complex structure. We will use the standard complex structure on $\cp{3}$ and the round metric on $L_{\Delta}$. We will need to perturb the Morse function slightly from the one we constructed earlier to ensure transversality between the holomorphic discs and the Morse flow lines: currently the boundaries of the Maslov 2 holomorphic discs through the maximum $m$ run along the gradient flow lines $\alpha_i$ and those through the minimum $m'$ run along the flow lines $\alpha_i'$. Conveniently, all the Maslov 2 holomorphic discs through $m'$ stay in the lower handlebody $N'$ of the Heegaard decomposition and all the Maslov 2 holomorphic discs through $m$ stay in $N$.

\begin{lma}
We can make a small perturbation of the Morse function such that:
\begin{itemize}
\item the descending discs $D_i$ of the index two critical points $x_i$ are unchanged,
\item the ascending discs $D_i'$ of the index one critical points $x_i'$ are unchanged,
\item the ascending lines from $x_i$ are disjoint from the Maslov 2 discs through $m$,
\item the ascending lines from $x_i'$ are disjoint from the Maslov 2 discs through $m'$.
\end{itemize}
Moreover we can ensure that the perturbed Morse function is still invariant under the left $\ZZ/3$-action on $SU(2)\Gamma_{\Delta}$.
\end{lma}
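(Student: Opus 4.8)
The plan is to perturb the function as $f\rightsquigarrow f+\epsilon h$, where $h$ is a small $\ZZ/3$-invariant function whose support is disjoint from the critical points and from the two-dimensional (un)stable discs $D_i$ and $D_i'$. With this choice the gradient is unchanged in a neighbourhood of $\bigcup_j(D_j\cup D_j')$, so $\nabla h$ vanishes along these discs and they remain flow-invariant and literally unchanged; the first two bullet points then hold automatically, and all the work goes into displacing the one-dimensional flow lines $\alpha_i$ and $\alpha_i'$ to secure the last two.

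First I would observe that the two displacement problems decouple. The Maslov 2 discs through $m$ have boundary inside the upper handlebody $N$, those through $m'$ inside $N'$; correspondingly $\alpha_i$ runs up to $m$ inside $N$ and $\alpha_i'$ runs down to $m'$ inside $N'$. So it is enough to treat the problem near $m$ — moving the $\alpha_i$ off the disc boundaries through $m$ while preserving the discs $D_i'$, which also limit onto $m$ — and then run the reflected argument near $m'$. The key geometric input is a dimension count: the flow line $\alpha_i$ and the boundary circle of a Maslov 2 disc through $m$ are both $1$-dimensional in the $3$-manifold $L_\Delta$, hence generically disjoint away from any forced intersection. The only forced intersection is the common endpoint $m$, and the present coincidence of $\alpha_i$ along an arc of the disc boundary is a nongeneric phenomenon that an arbitrarily small perturbation removes. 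I would therefore take $h$ supported in a thin tube about the relevant portion of $\alpha_i$ and invoke a standard Sard--Smale transversality argument to conclude that for generic small $h$ the perturbed $\alpha_i$ meets the disc boundaries only at $m$.

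The main obstacle is confining this tube so that conditions one and two are untouched, i.e.\ so that the tube avoids $\bigcup_j D_j'$ (and $\bigcup_j D_j$). Here one uses the Morse--Smale combinatorics: since the critical points $x_i$ all have the same index, $\alpha_i=W^s(x_i)$ meets $\bigcup_j D_j=\bigcup_j W^u(x_j)$ only at $x_i$, and for index reasons it meets $\bigcup_j D_j'=\bigcup_j W^u(x_j')$ only (in closure) at $m$, with transverse limiting directions there. Consequently the interior portion of $\alpha_i$ — exactly where it currently lies along the disc boundary — sits in the complement of a neighbourhood of $\bigcup_j(D_j\cup D_j')$, and the tube can be placed there; this is the delicate point, and it is precisely what allows the displacement without moving the preserved discs.

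Finally, to retain the left $\ZZ/3$-symmetry I would build $h$ equivariantly. The action fixes $m$ and $m'$ and cyclically permutes the $x_i$, the $x_i'$, and (by Lemma \ref{class2}) the three Maslov 2 discs in each family, while preserving $N$ and $N'$. Constructing the perturbation near $\alpha_1$ and $\alpha_1'$ and then averaging over $\ZZ/3$ (equivalently, transporting by the group) produces an invariant $h$ whose translates are supported near the three distinct flow lines; by equivariance the single transversality achieved near $\alpha_1$ propagates to all three, so all three conditions in each family hold simultaneously and $f+\epsilon h$ remains $\ZZ/3$-invariant and, for $\epsilon$ small, still Morse with the same critical points.
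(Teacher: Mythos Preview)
Your approach is essentially the paper's: perturb the Morse function locally near the one-dimensional flow lines $\alpha_i$, $\alpha_i'$ so as to displace them from the (fixed) holomorphic disc boundaries, and transport the perturbation by the $\ZZ/3$-action to retain the symmetry. The paper implements this via pullback by a compactly supported diffeomorphism that ``bends'' each $\alpha_i$ off its original position; your additive perturbation $f\mapsto f+\epsilon h$ with a genericity argument is an equivalent device.

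One step in your justification does not work as written. You claim a tube about $\alpha_i$ can be chosen disjoint from $\bigcup_j D_j'$ because ``for index reasons it meets $\bigcup_j D_j'=\bigcup_j W^u(x_j')$ only (in closure) at $m$, with transverse limiting directions there.'' But the Morse differential \eqref{eq:morse-diff} shows there is an upward flow line from every $x_j'$ to every $x_i$, so the closure $\overline{W^u(x_j')}$ contains all of $\alpha_i=W^u(x_i)$, not just the point $m$; consequently every open tube around $\alpha_i$ meets the open manifold $W^u(x_j')$, and your support condition cannot be satisfied on abstract index grounds. The repair is the observation you already made in your first paragraph but did not use here: in the paper's conventions $D_j'$ denotes the compact disc $W^u(x_j')\cap \overline{N'}$ with boundary on the Heegaard surface, while $\alpha_i$ lies in the interior of the opposite handlebody $N$. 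Supporting $h$ inside $\mathrm{int}(N)\setminus\bigcup_k D_k$ (and, dually, inside $\mathrm{int}(N')\setminus\bigcup_k D_k'$) then leaves all the discs $D_k$, $D_k'$ literally untouched while still allowing you to move the $\alpha_i$, $\alpha_i'$; the Heegaard splitting, not a Morse--Smale dimension count, is what makes the tube placement possible. (Incidentally, your labelling $\alpha_i=W^s(x_i)$, $D_j=W^u(x_j)$ has stable and unstable reversed: the ascending manifolds are the unstable ones.)
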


\begin{center}\includegraphics{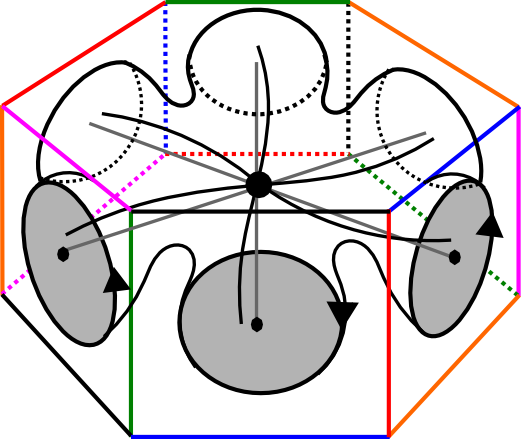}\end{center}

\begin{proof}
The perturbation which needs to be made is local near the loops $\alpha_i$ and $\alpha'_i$. It is effected by pulling back along a diffeomorphism $\phi$ which is supported in a neighbourhood of these loops. Denote by $\phi_i$ (respectively $\phi'_i$) the restriction of $\phi$ to a neighbourhood of $\alpha_i$ (respectively $\alpha'_i$). Consider a diffeomorphism $\phi_i$ (respectively $\phi'_i$) which bends the loop $\alpha_i$ (respectively $\alpha'_i$) so that it intersects the original loop only at the critical point $m$ (respectively $m'$) but still intersects $D_i'$ (respectively $D_i$) once transversely. The $\ZZ/3$-action rotates $\alpha_i$ to $\alpha_{i+1}$ (respectively $\alpha'_i$ to $\alpha'_{i+1}$) and we can simply choose $\phi_2$, $\phi_3$ to be conjugates of $\phi_1$ under this action (similarly for $\phi'_i$), so the resulting diffeomorphism $\phi$ is $\ZZ/3$-invariant. Pulling the Morse function back along $\phi$ yields a $\ZZ/3$-symmetric Morse function but now the gradient flow lines do not run along the boundaries of discs.
\end{proof}

We can also ensure that the following choices are invariant under the $\ZZ/3$-action:
\begin{itemize}
\item the orientations of the ascending and descending manifolds of $x_i$ and $x_i'$;
\item the orientations of the boundaries $\alpha_i$ and $\alpha_i'$ of the Maslov 2 discs through $m$ and $m'$.
\end{itemize}
This is important because it means whatever choices of orientations we make, the Floer complex will be cyclically symmetric under permuting $x_i$ and the $x_i'$.

The following lemma will be useful in establishing transversality. Let $V$ be the standard representation of $SU(2)$, identify $\cp{3}$ with $\PP(\OP{Sym}^3V)$ and pick coordinates $(x,y)\in V^*$ so that we can consider cubic polynomials in $x,y$ as defining points in $\cp{3}$.

\begin{lma}\label{lma:tang-rep}
Consider the points $x^3$ and $y^3$ in the twisted cubic $N_{\Delta}$ and the subgroup $S^1\subset SU(2)$ consisting of rotations of $\cp{1}$ which fix $0,\infty\in\cp{1}$ and hence fix $x^3,y^3\in N_{\Delta}$. The tangent space $T_{x^3}\cp{3}$ splits into weight spaces for the $S^1$-action with weights $2,4,6$. In particular the action of $S^1$ has no fixed vector in $T_{x^3}\cp{3}$.
\end{lma}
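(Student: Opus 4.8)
The plan is to write the $S^1$-action down explicitly on homogeneous coordinates and then read off the weights on $T_{x^3}\cp{3}$ using the standard description of the tangent space to a projective space at a fixed point. Recall from Section~\ref{moment} that $\cp{3}=\PP(\Sym^3V)$, with $\Sym^3V$ realised as cubic polynomials in the coordinates $x,y\in V$ and with the monomials $x^3,x^2y,xy^2,y^3$ as a basis. The rotations of $\cp{1}=\PP(V)$ fixing $0,\infty$ form the diagonal torus of $\SU(2)$, acting on $V$ by $x\mapsto e^{i\theta}x$ and $y\mapsto e^{-i\theta}y$; the two fixed points $[x],[y]\in\PP(V)$ are exactly the points mapping to $x^3,y^3\in N_{\Delta}$ under the Veronese embedding $\PP(V)\to\PP(\Sym^3V)$, $[v]\mapsto[v^3]$.

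First I would compute the weights of $S^1$ on $\Sym^3V$ itself. Using the action on polynomials recorded in Section~\ref{moment}, the monomial $x^ky^{3-k}$ is sent to $e^{i(2k-3)\theta}x^ky^{3-k}$, so $\Sym^3V$ splits into one-dimensional weight spaces of weights $3,1,-1,-3$, and $x^3$ spans the line $\ell$ of top weight $3$. Now for any representation $W$ and any weight-$\lambda$ eigenline $\ell\subset W$, the holomorphic tangent space at $[\ell]$ is canonically $\OP{Hom}(\ell,W/\ell)\cong\ell^*\otimes(W/\ell)$; since $\ell^*$ carries weight $-\lambda$, the weights on $T_{[\ell]}\PP(W)$ are precisely the weights of $W/\ell$ each decreased by $\lambda$. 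Applying this with $W=\Sym^3V$, $\lambda=3$, and $W/\ell$ carrying the remaining weights $1,-1,-3$, gives the weights $-2,-4,-6$ on $T_{x^3}\cp{3}$. Up to the overall sign, which is fixed by the choice of generator (equivalently the orientation) of $S^1$, these are the asserted weights $2,4,6$; the computation at $y^3$ is identical with the roles of $x$ and $y$ interchanged.

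Finally, the fixed subspace $(T_{x^3}\cp{3})^{S^1}$ is the zero-weight subspace, and since none of $2,4,6$ vanishes it is trivial, which is the last claim. I do not expect a genuine obstacle here: the one point that must be handled with care is the twist by $-\lambda$ coming from the dual line $\ell^*$ in the identification of the tangent space — that is, one must measure weights relative to the fixed line $x^3$ rather than inside $\Sym^3V$ — together with pinning down the sign convention for the generator of $S^1$ so that the weights are read as $2,4,6$ rather than $-2,-4,-6$. Both are routine once the explicit action above is in hand.
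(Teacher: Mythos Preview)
Your argument is correct and is essentially the same as the paper's: the paper phrases the tangent directions as the complex lines in $\cp{3}$ joining $x^3$ to the other monomials $x^2y$, $xy^2$, $y^3$, which is exactly your identification $T_{[\ell]}\PP(W)\cong\ell^*\otimes(W/\ell)$ stated geometrically, and both read off the weights $\pm 2,\pm 4,\pm 6$ from the weight differences with $x^3$. The paper adds the inessential remark that the weights halve to $1,2,3$ upon passing from $SU(2)$ to $SO(3)$; your explicit handling of the sign ambiguity is, if anything, a bit more careful than the paper's.
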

\begin{proof}
The complex lines connecting $x^3$ to $x^2y$, $x^2y$, $y^3$ span $T_{x^3}\cp{3}$. They are invariant under the $S^1$-action and come with weights $2,4,6$ respectively. Note that under the corresponding subgroup of rotations in $SO(3)$ the weights are $1,2,3$ but weights are doubled for the spin-preimage of $S^1$.
\end{proof}

Now we can compute the Floer complex.
\begin{lma}\label{lma:floer-cpx}
The Floer differential is given by
\begin{align*}
d_Fm&=Y(x_1+x_2+x_3)+2Zm'\\
d_Fm'&=0\\
d_Fx_1'&=x_1+x_2+2x_3+Xm'\\
d_Fx_2'&=2x_1+x_2+x_3+Xm'\\
d_Fx_3'&=x_1+2x_2+x_3+Xm'.
\end{align*}
for some $X,Y,Z\in\{-1,1\}$.
\end{lma}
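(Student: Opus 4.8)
The plan is to write $d_F=d_M+d_F^{\mathrm{disc}}$, where $d_M$ is the Morse differential of \eqref{eq:morse-diff}, and to pin down the disc corrections $d_F^{\mathrm{disc}}$ by the index count for the pearl complex together with the explicit classification of low-Maslov discs. A pearly trajectory from $c$ to $b$ whose discs carry total Maslov index $\mu$ is rigid only when the Morse indices satisfy $|b|=|c|-\mu+1$, and since $L_\Delta$ has minimal Maslov number $2$ (Lemma \ref{lma:monotone}) only $\mu=2$ and $\mu=4$ can contribute between the listed generators. This already fixes the shape of the answer: $d_F m'=0$ because every candidate target has negative index; $d_F x_i'$ can only acquire a Maslov $2$ term on the index-$0$ generator $m'$, its index-$2$ part being purely the Morse term $d_M x_i'$; and $d_F m$ can acquire a Maslov $2$ term on the $x_i$ and a Maslov $4$ term on $m'$ (with $d_M m=0$). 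Regularity of all discs (Lemma \ref{lma-reg}) ensures every moduli space in sight is smooth of the expected dimension, so the coefficients are honest signed counts.

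Next I would evaluate the two Maslov $2$ corrections. Since $m$ is the maximum and $m'$ the minimum, the terminal gradient segments of the relevant trajectories are forced to be constant, so a contribution to $\langle d_F m,x_i\rangle$ comes from a Maslov $2$ disc through $m$ and a contribution to $\langle d_F x_i',m'\rangle$ from a Maslov $2$ disc through $m'$. By Lemma \ref{class2} there are exactly three discs of each kind; their boundaries run along the flowlines $\alpha_j$ and $\alpha_j'$ and, crucially, stay inside the handlebodies $N$ and $N'$. The remaining incidence is precisely the type computed for $d_M$: the disc along $\alpha_j$ meets the descending disc $D_i$ of $x_i$ exactly when $i=j$, transversally in one point, and the disc along $\alpha_j'$ meets the ascending disc $D_i'$ of $x_i'$ exactly when $i=j$. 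Each coefficient is therefore a single transverse point, giving $\langle d_F m,x_i\rangle=Y$ and $\langle d_F x_i',m'\rangle=X$ with $X,Y\in\{-1,1\}$; the $\ZZ/3$-symmetry of the Morse function and of all the chosen orientations forces these to be independent of $i$.

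I would then turn to the Maslov $4$ coefficient $\langle d_F m,m'\rangle$, where the endpoint segments are again constant, so a single-disc contribution is exactly a Maslov $4$ disc whose boundary passes through both $m'$ and $m$. By the classification of Maslov $4$ discs through $m'$ and $m$ proved above there are precisely two such discs, the axial discs $u_\pm$ of Example \ref{exm:maslov4} attached to $R(\theta)=\exp(\pm\theta\sigma_1/6)$. These are embedded, and since the central value $u(0)$ lies on the twisted cubic where, by Lemma \ref{lma:tang-rep}, the $S^1$-action has weights $2,4,6$ and no fixed vector, Lemma \ref{lma:ev2-transversality} shows the two-point evaluation is a submersion; hence each disc contributes $\pm1$. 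Finally $u_+$ and $u_-$ are interchanged by a flip $g\in\Gamma_\Delta$ reversing the $\sigma_1$-axis, and one checks directly that $g$ fixes both $m'=\Delta$ and $m=\exp(\pi\sigma_1/6)\Delta$; as $g$ lies in the connected group $SU(2)$ it preserves the orientation and the spin structure, hence the orientation of the moduli spaces, so the two discs are counted with the same sign. This yields $\langle d_F m,m'\rangle=2Z$ with $Z\in\{-1,1\}$.

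The step I expect to be the main obstacle is excluding the \emph{broken} pearly trajectories built from two Maslov $2$ discs (again total Maslov $4$), which the index count also admits at dimension zero. Such a trajectory would start with a Maslov $2$ disc through $m'$, contained in the sublevel handlebody $N'$, continue along a negative gradient segment, and end with a Maslov $2$ disc through $m$, contained in the superlevel handlebody $N$; but a negative gradient segment cannot raise the value of the Morse function from $N'$ up into $N$, so no such trajectory exists and the count is exactly $2Z$. The only residual subtlety lies in the orientation and spin bookkeeping behind the sign assertions, which I would carry out following the conventions of \cite[Appendix A]{BC1}; the same circle of ideas determines the differential on the remaining generators.
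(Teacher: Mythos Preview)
Your argument follows the paper's closely, with one substantive difference: to show the two axial Maslov $4$ discs contribute with the same sign, you invoke a symmetry $g\in\Gamma_\Delta$ swapping $u_\pm$ while fixing $m$ and $m'$, whereas the paper argues indirectly (Remark \ref{rmk:signs}): if the signs were opposite then $\det d_F=\pm 3$, forcing $HF\neq 0$ over $\ZZ/3$ in contradiction to Lemma \ref{lma:modp}. Your route is more direct but needs the spin-structure sign bookkeeping made precise; the paper's avoids this at the cost of forward-referencing the full computation.

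You also address a point the paper leaves implicit, namely the exclusion of two-Maslov-$2$-disc pearly trajectories from $m$ to $m'$. Your Morse-value argument is correct once the flow convention is straightened out: the paper uses \emph{upward} flowlines, so the middle segment would have to ascend from a point on $\alpha_i\subset N$ to a point on $\alpha_j'\subset N'$, which is impossible since $f|_{\alpha_i}\geq f(x_i)>f(x_j')\geq f|_{\alpha_j'}$. One omission worth noting: before counting, the paper first perturbs the Morse function $\ZZ/3$-equivariantly (keeping the discs $D_i$, $D_i'$ fixed) so that the disc boundaries $\alpha_i$, $\alpha_i'$ no longer run along gradient flowlines; without this the single-disc pearly moduli spaces are not cut out transversally.
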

\begin{proof}
The coefficient of $x_i$ in $d_Fm$ is the (signed) count of pearly trajectories consisting of a Maslov 2 disc through $m$ which intersects the descending manifold of $x_i$. The only such disc has boundary $\alpha_i$ which intersects the descending manifold once transversely so the coefficient of $x_i$ in $d_Fm$ is $Y_i\in\{-1,1\}$. By cyclic symmetry, $Y_1=Y_2=Y_3=\colon Y$.

The coefficient of $m'$ in $d_Fm$ is the (signed) count of pearly trajectories consisting of a Maslov 4 disc whose boundary contains both $m'$ and $m$. There are two such discs and they contribute with the same sign (see Remark \ref{rmk:signs} below) so the coefficient is $2Z$. Note that by Lemma \ref{lma:ev2-transversality} this pearly moduli space is regular: it suffices to check that the $S^1$-action which rotates the Maslov 4 disc around its centre $u(0)$ has no fixed vector in its action on $T_{u(0)}\cp{3}$. This follows from Lemma \ref{lma:tang-rep}.

The coefficient of $m'$ in $d_Fx_i'$ is the (signed) count of pearly trajectories consisting of an upward flowline from $x_i'$ which intersects a Maslov 2 disc through $m'$. There is precisely one of these, given by the intersection of the boundary $\alpha'_i$ with the ascending disc $D_i'$, so the coefficient is $X_i\in\{-1,1\}$. By cyclic symmetry, $X_1=X_2=X_3=\colon X$.
\end{proof}

\begin{cor}
We also have $d_Fx_i=0$, $i=1,2,3$.
\end{cor}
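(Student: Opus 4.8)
The plan is to avoid enumerating any pearly trajectories out of the $x_i$ directly, and instead to deduce the vanishing purely algebraically from the relation $d_F^2=0$ (which holds because the pearl complex is a genuine cochain complex \cite{BC0}) together with the three differentials $d_F x_i'$ already computed in Lemma \ref{lma:floer-cpx}. The observation driving the argument is that the cochains $d_F x_1$, $d_F x_2$, $d_F x_3$ are forced to satisfy a linear system whose coefficient matrix is invertible over $\mathbf{Q}$, so over a torsion-free coefficient ring the only solution is the trivial one.

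Concretely, I would apply $d_F$ to each of the identities
\[d_F x_1'=x_1+x_2+2x_3+Xm',\qquad d_F x_2'=2x_1+x_2+x_3+Xm',\qquad d_F x_3'=x_1+2x_2+x_3+Xm'.\]
Since $d_F^2=0$ and $d_F m'=0$, the terms $X\,d_F m'$ drop out (so the undetermined signs $X,Y,Z$ never enter). Writing $a=d_F x_1$, $b=d_F x_2$, $c=d_F x_3$, which live in the free $\ZZ$-module spanned by $x_1',x_2',x_3',m$, the three identities become
\[a+b+2c=0,\qquad 2a+b+c=0,\qquad a+2b+c=0,\]
i.e. $M\,(a,b,c)^{T}=0$ with $M=\left(\begin{smallmatrix}1&1&2\\2&1&1\\1&2&1\end{smallmatrix}\right)$. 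The key numerical fact is $\det M=4$. Multiplying the system by the adjugate of $M$ yields $4a=4b=4c=0$, and since $a,b,c$ lie in a free $\ZZ$-module they are torsion-free, so $a=b=c=0$; that is, $d_F x_i=0$ for $i=1,2,3$.

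I expect essentially no obstacle along this route: the only point requiring care is that the coefficient ring be torsion-free enough that $\det M=4$ is not a zero-divisor, which is automatic over $\ZZ$ (the conclusion could fail in characteristic $2$, but that case is excluded throughout). It is worth noting that the identical linear-algebra argument applied to the Morse differential $d_M$, which also squares to zero and satisfies the same three relations on the $x_i'$ (now with no $m'$ term), reproves $d_M x_i=0$, consistent with $H^3(L_\Delta;\ZZ)=\ZZ$. By contrast, a direct geometric proof would have to show that the signed count of Maslov~$2$ discs whose boundary meets both the ascending line $\alpha_i$ of $x_i$ and the descending line $\alpha_j'$ of $x_j'$ vanishes for all $i,j$ — the grading constraint $i_b=i_c+1-\mu$ leaves exactly these terms (plus the Morse contribution to $m$) as the only candidates — and it is precisely this disc count that the algebraic argument lets us sidestep entirely.
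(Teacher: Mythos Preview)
Your argument is correct and, like the paper's, is purely algebraic: both deduce the vanishing from $d_F^2=0$ rather than by enumerating pearly trajectories out of the $x_i$. The paper, however, applies $d_F^2=0$ in the dual direction: it first notes that the $m$-coefficient of $d_F x_1$ is the Morse differential of $x_1$, which vanishes, so writes $d_F x_1 = p\,x_1' + q\,x_2' + r\,x_3'$; then expands $0 = d_F^2 x_1 = p\,d_F x_1' + q\,d_F x_2' + r\,d_F x_3'$ and reads off four scalar equations in $p,q,r$ (the coefficients of $x_1,x_2,x_3,m'$) forcing $p=q=r=0$; finally it invokes the cyclic $\ZZ/3$-symmetry to transfer this to $x_2$ and $x_3$. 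Your version, applying $d_F^2=0$ to the $x_j'$ instead, buys two small economies: you never need to argue separately that $d_F x_i$ has no $m$-component, and you treat all three $x_i$ at once without appealing to the symmetry. The price is that your coefficient matrix has determinant $4$ rather than yielding equations that solve on the nose, so you need the torsion-freeness step; over $\ZZ$ this is harmless, and you correctly flag that the argument would not go through in characteristic~$2$.
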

\begin{proof}
Certainly the Morse differentials of $x_i$ vanish; suppose that $d_Fx_i=px_1'+qx_2'+rx_3'$ then we get
\[d_F^2x_1=p(x_1+x_2+2x_3)-q(2x_1+x_2+x_3)-r(x_1+2x_2+x_3)+Xm'(p+q+r)=0\]
so $p-2q-r=p-q-2r=2p-q-r=p+q+r=0$. These equations imply $p=q=r=0$. By cyclic symmetry $d_Fx_1=d_Fx_2=d_Fx_3=0$.
\end{proof}

\begin{cor}
The Floer differential $d_F\colon CF^0(L_{\Delta},L_{\Delta};\ZZ)\to CF^1(L_{\Delta},L_{\Delta};\ZZ)$ vanishes. The matrix of the Floer differential $d_F\colon CF^1(L_{\Delta},L_{\Delta};\ZZ)\to CF^0(L_{\Delta},L_{\Delta};\ZZ)$ with respect to the bases $m,x_1',x_2',x_3'$ and $m',x_1,x_2,x_3$ is
\[\left(\begin{array}{cccc}
Y & Y & Y & 2Z\\
1 & 1 & 2 & X\\
2 & 1 & 1 & X\\
1 & 2 & 1 & X
\end{array}\right).\]
\end{cor}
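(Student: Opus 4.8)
The plan is to obtain the corollary as a direct consequence of Lemma \ref{lma:floer-cpx} and the preceding corollary, once the $\ZZ/2$-grading has been made explicit; no new geometric input is required.

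First I would pin down the two graded pieces. Since $L_{\Delta}$ is a closed $3$-manifold, our Morse function has critical points only in indices $0,1,2,3$: the minimum $m'$ in index $0$, the three points $x_1',x_2',x_3'$ in index $1$, the three points $x_1,x_2,x_3$ in index $2$, and the maximum $m$ in index $3$. Collapsing to a $\ZZ/2$-grading by the parity of the Morse index, the even (degree $0$) generators are those of index $0$ and $2$, so $CF^0(L_{\Delta},L_{\Delta};\ZZ)=\langle m',x_1,x_2,x_3\rangle$, while the odd (degree $1$) generators are those of index $1$ and $3$, so $CF^1(L_{\Delta},L_{\Delta};\ZZ)=\langle m,x_1',x_2',x_3'\rangle$. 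The Floer differential is odd, so it splits into precisely the two maps $d_F\colon CF^0\to CF^1$ and $d_F\colon CF^1\to CF^0$ named in the statement.

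The vanishing of $d_F$ on $CF^0$ is then immediate: its four generators are $m'$ and $x_1,x_2,x_3$, and Lemma \ref{lma:floer-cpx} gives $d_Fm'=0$ while the preceding corollary gives $d_Fx_i=0$ for $i=1,2,3$. For the second map I would transcribe the four remaining formulae of Lemma \ref{lma:floer-cpx}, namely the expressions for $d_Fm$, $d_Fx_1'$, $d_Fx_2'$ and $d_Fx_3'$ as integer combinations of $m',x_1,x_2,x_3$, into a coefficient array. Reading off, for each of these four domain generators, the coefficient of each codomain generator and arranging them with respect to the indicated bases produces exactly the displayed $4\times 4$ matrix, with the signs $X,Y,Z\in\{-1,1\}$ carried over unchanged from the lemma.

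I do not expect any genuine obstacle: the entire content of the corollary is a reorganisation of data already computed. The only points demanding care are the correct identification of which generators lie in $CF^0$ versus $CF^1$ under the parity collapse of the grading in the first step, and the faithful placement of coefficients relative to the chosen orderings of the two bases in the last step. No further disc counts, transversality statements, or orientation computations enter beyond those already settled in Lemma \ref{lma:floer-cpx} and the preceding corollary; one may also observe as a consistency check that $d_F^2=0$ holds trivially, since $d_F$ annihilates its own image $CF^0$.
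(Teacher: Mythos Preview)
Your proposal is correct and matches the paper's approach: the corollary is stated in the paper without proof, precisely because it is nothing more than the reorganisation you describe of Lemma~\ref{lma:floer-cpx} together with the vanishing $d_Fx_i=0$ from the preceding corollary. Your identification of the graded pieces and the reading-off of the matrix are exactly what is intended.
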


\begin{cor}
We have:
\begin{enumerate}
\item $HF^*(L_{\Delta},L_{\Delta};\ZZ)\neq 0$; in fact $HF^0(L_{\Delta},L_{\Delta};\ZZ)\cong\ZZ/5$, $HF^1(L_{\Delta},L_{\Delta};\ZZ)=0$.
\item Moreover if $\kk$ is a field of characteristic 5 then
\[HF^0(L_{\Delta},L_{\Delta};\kk)\cong HF^1(L_{\Delta},L_{\Delta};\kk)\cong\kk.\]
\end{enumerate}
\end{cor}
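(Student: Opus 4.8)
The plan is to identify both Floer groups with the kernel and cokernel of the single matrix displayed in the preceding corollary, and thereby reduce the whole statement to one determinant computation. Since the other component $d_F\colon CF^0\to CF^1$ vanishes, we have $HF^0\cong\OP{coker}(d_F^1)$ and $HF^1\cong\ker(d_F^1)$, where $d_F^1\colon CF^1\to CF^0$ is the integer matrix
\[A=\begin{pmatrix} Y & Y & Y & 2Z\\ 1 & 1 & 2 & X\\ 2 & 1 & 1 & X\\ 1 & 2 & 1 & X\end{pmatrix},\qquad X,Y,Z\in\{-1,1\}.\]
This identification is valid over $\ZZ$ and after tensoring with any field, so the problem is now pure linear algebra.

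First I would compute $\det A$ by expanding along the last column and factoring the unit $Y$ out of the three order-$3$ minors meeting the first row; the four minors are $4$, $Y$, $-Y$, $Y$, and assembling them gives $\det A=3XY-8Z$. As $XY$ and $Z$ each range over $\{-1,1\}$, the determinant takes one of the four values $\pm5,\pm11$.

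The main obstacle is deciding which value occurs, that is, ruling out $\pm11$; the signs $X,Y,Z$ are not individually pinned down at this stage, and one checks that $d_F^2=0$ imposes no further relation among them. The clean way around this is the eigenvalue constraint of Lemma \ref{lma:modp}: if $\det A=\pm11$, then $A\otimes\kk$ is singular precisely when $\OP{char}\kk=11$, so $HF(L_{\Delta},L_{\Delta};\kk)$ would be non-zero over a field of characteristic $11$, contradicting Lemma \ref{lma:modp}, which allows only $p=5$ or $p=7$. Hence $\det A=\pm5$.

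With $\det A=\pm5$ in hand both parts follow at once. Over $\ZZ$ the map $A$ is injective, so $HF^1(\ZZ)=\ker A=0$, while $\OP{coker}(A)$ is a finite abelian group of prime order $5$ and is therefore cyclic, giving $HF^0(\ZZ)\cong\ZZ/5$ and in particular $HF^*(L_{\Delta},L_{\Delta};\ZZ)\neq0$; note that primality of $5$ sidesteps any Smith-normal-form subtlety. Over a field $\kk$ of characteristic $5$ the determinant vanishes, but the order-$3$ minor obtained by deleting the first row and last column equals $4\not\equiv0\bmod 5$, independently of $X,Y,Z$, so $A\otimes\kk$ has rank exactly $3$; hence $\ker$ and $\OP{coker}$ are each one-dimensional and $HF^0(L_{\Delta},L_{\Delta};\kk)\cong HF^1(L_{\Delta},L_{\Delta};\kk)\cong\kk$.
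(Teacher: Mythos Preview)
Your proof is correct and follows essentially the same route as the paper: compute the determinant $|\det A|\in\{5,11\}$, invoke Lemma~\ref{lma:modp} to exclude $11$, and read off kernel and cokernel. You are slightly more explicit than the paper in two places---verifying the $3\times 3$ minor equals $4\not\equiv 0\pmod 5$ to pin down the rank over $\kk$, and noting that primality of $5$ forces the integer cokernel to be cyclic---but the argument is the same.
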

\begin{proof}
The determinant of $d_F\colon CF^1(L_{\Delta},L_{\Delta};\ZZ)\to CF^0(L_{\Delta},L_{\Delta};\ZZ)$ is $8Z-3XY$. Since $X,Y,Z\in\{-1,1\}$ this is not a unit in $\ZZ$ and hence the matrix has trivial kernel (so $HF^1(L_{\Delta},L_{\Delta};\ZZ)=0$) but nontrivial cokernel of size $|8Z-3XY|$.

Indeed if we work over a field $\kk$ of characteristic $p$ where $p$ divides $|8Z-3XY|$ then the Floer cohomology is $\kk$ in degrees zero and one. By Lemma \ref{lma:modp}, $|8Z-3XY|$ must be zero modulo 5 or 7. The only possibility is $p=5$ (and $XY/Z=1$).
\end{proof}

\begin{rmk}\label{rmk:signs}
Note that we could also argue this way to show that the two Maslov 4 discs contribute with the same sign to the Floer differential: otherwise the determinant of $d_F$ would be $\pm 3$ and the Floer cohomology would be nonzero over $\ZZ/3$.
\end{rmk}

\section{Split-generating the Fukaya category}

In this section we show that the Chiang Lagrangian $L_\Delta$, when equipped with various $(\ZZ/5)^\times$-local systems, split-generates the Fukaya category of $\cp{3}$
over $\kk  = \ZZ/5$ (this holds more generally over any field of characteristic 5). We
will use this information to determine the ring structure on $HF^*(L_\Delta,
L_\Delta;\kk)$ indirectly. Furthermore, we will prove that the $A_\infty$ structure
on $HF^*(L_\Delta, L_\Delta;\kk)$ is formal. 

\subsection{The Clifford torus}

Recall from above that we have a (topological) decomposition of $\cp{3}$ as : \[ \cp{3} = T^*
L_\Delta \cup Y_{\Delta}.\] Let us recall a, perhaps more familiar, decomposition
of $\cp{3}$ coming from its toric structure. Namely, we have the action of the
algebraic torus $G= (\CC^*)^3$ on $\cp{3}$ given by:
\[ (t_1,t_2,t_3) \cdot [u_0 : u_1 : u_2 : u_3] = [u_0 : u_1 t_1 : u_2 t_2 : u_3
t_3]. \] 
The action of the compact group $K= (S^1)^3$ is Hamiltonian with moment
map: \[ \mu(u_0: u_1: u_2: u_3) = \frac{1}{2} \left( \frac{|u_1|^2}{\sum_{i=0}^3 |u_i|^2},
\frac{|u_2|^2}{\sum_{i=0}^3 |u_i|^2}, \frac{|u_3|^2}{\sum_{i=0}^3 |u_i|^2 )} \right). \]
Since $K$ is abelian, each fibre of $\mu : \cp{3} \to \RR^3$ is an isotropic
torus. There is a fibre given by 
\[ T_{Cl} = \{ [u_0: u_1: u_2: u_3] : |u_0| = |u_1| = |u_2| = |u_3| \} \]
which is special as it is a monotone Lagrangian (with minimal Maslov number 2).
It is called the Clifford torus. We have a
(topological) decomposition: \[ \cp{3} = T^* T_{Cl} \cup D \] where $D$ is the
toric divisor (union of lower dimensional orbits); $D$ is anticanonical. 

Floer cohomology of the Clifford torus was computed additively by Cho in
\cite{cho1}. When $T_{Cl}$ is equipped with the standard spin structure, one
has $\mathfrak{m}_0(T_{Cl})=4$ (there are four families of Maslov 2 discs corresponding 4
faces of the moment polytope) and there is an (additive) isomorphism
$HF^*(T_{Cl}, T_{Cl};\kk) \simeq H^*(T_{Cl})$. On the other hand, it is shown in
\cite{cho2} that the multiplication on $HF^*(T_{Cl}, T_{Cl};\kk)$ is deformed. More
precisely, one has: \[ HF^*(T_{Cl}, T_{Cl};\kk) \simeq Cl(V, q) \] where $V$ is a
3-dimensional vector space and $Cl(V,q)$ is the Clifford algebra associated
with the quadratic form given by the symmetric matrix:
\[q = \left(\begin{array}{ccc} 2 & 1 & 1 \\ 1 & 2 & 1\\ 1 & 1 & 2
\end{array}\right). \]

Recall that $Cl(V,q)$ is the graded $\kk$-algebra given by the quotient of the
tensor algebra $T(V)$ (where $V$ sits in grading 1) by the two-sided ideal
generated by the elements of the form \[ v \otimes w + w \otimes v - q(v,w) 1. \] 
Note that $Cl(V,0) \simeq H^*(T_{Cl})$ is just the exterior algebra.  It is easy to verify from
the arguments given in \cite{cho1}, \cite{cho2} that this computation remains
valid over a field $\kk$ of characteristic 5. We note that the above quadratic
form is non-degenerate also over $\kk$ of characteristic 5. 

Let $X$ be a closed monotone symplectic manifold. We have a natural splitting of $QH^*(X)$ as a ring:
\[ QH^*(X) = \bigoplus_{\lambda \in Spec(c_1\star)} QH^*(X,\lambda) \]
into generalized eigenspaces for the linear transformation $c_1(X) \star : QH^*(X) \to QH^*(X)$. The Fukaya category $\mathcal{F}(X)$ also splits into mutually orthogonal subcategories 
\[ \mathcal{F}(X) = \bigoplus_{\lambda \in Spec(c_1\star)} \mathcal{F}(X;\lambda) \]
where objects of $\mathcal{F}(X; \lambda)$ are closed, orientable, spin monotone Lagrangian submanifolds $L \subset X$ such that $HF^* (L,L;\kk) \neq 0$ and $\mathfrak{m}_0(L) = \lambda$. (Note that any Lagrangian with non-vanishing Floer cohomology has $\mathfrak{m}_0(L) \in Spec(c_1\star)$ by Proposition \ref{prp:c1-eval} ).

We will make use of the following version of Abouzaid's split-generation criterion \cite{abouzaid} (proved in the monotone setting by Ritter and Smith \cite{RS}, Sheridan \cite{S} and in general by \cite{AFOOO}). 

\begin{thm} \label{generation} (\cite{S} Corollary 3.8) Let $L \subset X$ be a monotone Lagrangian submanifold in a monotone symplectic manifold $M$ with $\mathfrak{m}_0(L)= \lambda$. Suppose the closed-open string map
	\[ \mathcal{CO} : QH^{\bullet}(X,\lambda) \to HH^{\bullet} (CF^*(L,L;\kk)) \]
	is injective, then $L$ split-generates $\mathcal{F}(X; \lambda)$.  
\end{thm}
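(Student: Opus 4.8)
This is Sheridan's packaging of the Abouzaid split-generation criterion, so the proof I would give follows the strategy of Abouzaid \cite{abouzaid}, adapted to the monotone eigenvalue-summand setting by Ritter--Smith \cite{RS} and Sheridan \cite{S}. The engine is a duality between $\mathcal{CO}$ and the \emph{open-closed} map
\[ \mathcal{OC}\colon HH_{\bullet}(CF^*(L,L;\kk))\to QH^{\bullet+n}(X,\lambda),\qquad n=\dim_{\CC}X, \]
defined by counting holomorphic discs with boundary on $L$ carrying boundary inputs together with an interior output constrained to a cycle in $X$. The aim is to feed a single hypothesis, injectivity of $\mathcal{CO}$, into Abouzaid's geometric criterion, which states that $L$ split-generates $\mathcal{F}(X;\lambda)$ as soon as the unit $e_{\lambda}$ of the ring summand $QH^*(X,\lambda)$ lies in the image of $\mathcal{OC}$ restricted to the subcategory on the single object $L$.

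First I would set up the two nondegenerate pairings that make the deduction purely formal. On $QH^*(X,\lambda)$ one has the Poincar\'e pairing; it is nondegenerate because $QH^*(X)$ is a Frobenius algebra and the generalized eigenspace decomposition $QH^*(X)=\bigoplus_{\lambda}QH^*(X,\lambda)$ is orthogonal for this pairing (this is precisely the orthogonality of the summands $\mathcal{F}(X;\lambda)$ recorded above). On the Hochschild side, since $L$ is a closed, oriented, spin Lagrangian, $CF^*(L,L;\kk)$ carries a cyclic (proper Calabi--Yau) $A_{\infty}$-structure coming from Poincar\'e duality on $L$, so the Mukai pairing on $HH_{\bullet}(CF^*(L,L;\kk))$ is nondegenerate. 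The central input is then the \emph{Cardy relation}: with respect to these pairings $\mathcal{CO}$ and $\mathcal{OC}$ are mutually adjoint,
\[ \langle \mathcal{OC}(\xi),a\rangle_{X}=\langle \xi,\mathcal{CO}(a)\rangle_{\mathrm{Muk}},\qquad \xi\in HH_{\bullet}(CF^*(L,L;\kk)),\ a\in QH^*(X,\lambda). \]

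Granting this, the deduction is a one-line diagram chase. Because both pairings are nondegenerate, inside $QH^*(X,\lambda)$ one has $\mathrm{im}(\mathcal{OC})^{\perp}=\ker(\mathcal{CO})$, so $\mathcal{OC}$ is surjective precisely when $\mathcal{CO}$ is injective; surjectivity of $\mathcal{OC}$ in particular puts the unit $e_{\lambda}$ in its image, and Abouzaid's criterion then yields split-generation. The genuinely hard part is not this formal step but the two inputs on which it rests: the construction of $\mathcal{OC}$ together with Abouzaid's theorem that hitting the unit forces split-generation (which proceeds by showing the diagonal bimodule is split-generated by tensor products built from $L$), and the proof of the Cardy relation itself, which requires analysing a one-parameter degeneration of the moduli of annuli with boundary on $L$ and interior/boundary punctures and matching the two ways the annulus can break. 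In the application of interest one takes $X=\cp{3}$ and $\lambda=\mathfrak{m}_0(L_{\Delta})$, and the injectivity of $\mathcal{CO}$ will be verified by a direct Hochschild cohomology computation, so only the general criterion stated here is needed as a black box.
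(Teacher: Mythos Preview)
The paper does not prove this statement at all: it is quoted as a black-box result from the literature (Abouzaid \cite{abouzaid}, Ritter--Smith \cite{RS}, Sheridan \cite{S}, and \cite{AFOOO}), and is applied immediately thereafter without any argument. So there is no ``paper's own proof'' to compare against.

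That said, your sketch is a faithful outline of the argument as it appears in Sheridan's paper: the Cardy relation making $\mathcal{CO}$ and $\mathcal{OC}$ adjoint with respect to the Poincar\'e and Mukai pairings, the deduction that injectivity of $\mathcal{CO}$ forces surjectivity of $\mathcal{OC}$, and the appeal to Abouzaid's criterion once the unit is hit. One small caveat: in the actual application the paper makes, injectivity of $\mathcal{CO}$ is \emph{not} verified by a Hochschild computation as you suggest in your last sentence; rather, the paper uses the fact that $QH^*(\cp{3},\lambda)$ has rank one together with the observation (stated just after Theorem~\ref{generation}) that the composite $QH^*(X,\lambda)\to HH^*\to HF^*(L,L)$ sends the projection of $2c_1(X)$ to $2\mathfrak{m}_0(L)\cdot 1\neq 0$, so $\mathcal{CO}$ is automatically injective on a one-dimensional source.
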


In the above, $HH^{\bullet}(CF^*(L,L;\kk))$ refers to the Hochschild cohomology of the
$A_\infty$ algebra $CF^*(L,L;\kk)$. Note that in the monotone setting
$\mathcal{F}(X)$ is a $\ZZ/2$ graded $A_\infty$ category. Therefore, $HH^\bullet
(CF^*(L,L;\kk))$ should be computed in the $\ZZ/2$ graded sense \cite{kassel}.

By projection to the 0th order term of the Hochschild complex, we have a ring map:
\[ HH^{\bullet} (CF^*(L,L;\kk)) \to HF^{\bullet} (L,L;\kk). \]
The composition 
\[ QH^{\bullet}(X, \lambda) \to HH^{\bullet} (CF^*(L,L;\kk)) \to HF^{\bullet} (L,L;\kk) \] 
sends the projection of $2c_1(X)$ in $QH^{\bullet} (X,\lambda)$ to $2\mathfrak{m}_0(L) 1$ \cite[Lemma 3.2]{S}. Therefore, in the case when $QH^{\bullet}(X,\lambda)$ has rank 1, it follows immediately from 
Theorem \ref{generation} that a Lagrangian $L$ with non-trivial Floer cohomology and $\mathfrak{m}_0(L) \neq 0$, split-generates the corresponding summand of the Fukaya category.

Let us now restrict our attention to $X= \cp{3}$ and work over the field $\kk$ of characteristic 5. Then, we have $Spec(c_1\star) = \{1,2,3,4\}$ and we have the decomposition:
\[ QH^*(\cp{3}) = \bigoplus_{i \in \{1,2,3,4 \} } QH^*(\cp{3}, i) \]
where for all $i \in \{1,2,3,4 \}$, we have $QH^*(\cp{3}, i) \simeq \kk$.

Thus, Theorem \ref{generation} immediately gives that $T_{Cl}$ (equipped with
its standard spin structure so that $\mathfrak{m}_0(T_{Cl}) =4$) split-generates
$\mathcal{F}(\cp{3}, 4)$. In order to access the other components, we may equip
$T_{Cl}$ with a $(\ZZ/5)^\times$ local system. To this end, we recall from \cite{cho1}
the classification of Maslov 2 discs for $T_{Cl}$. There are four families of
Maslov 2 discs with boundary on $T_{Cl}$. If we fix the point $p= [1:1:1:1] \in
T_{Cl}$, the 4 Maslov 2 discs through this point are given by: \[ \{ [z:1:1:1]
, [1:z:1:1] , [1:1:z:1], [1:1:1:z] : |z| \leq 1 \} \] One obtains all other Maslov 2 discs by
translating these using the torus action. In particular, note that the homology
classes $l_1, l_2, l_3, l_4$ of the boundaries of these discs satisfy:
\begin{equation}\label{constraint} l_1 + l_2 + l_3 + l_4 = 0  \in H_1(T_{Cl}) \end{equation}
and we have $H_1(T_{Cl}) = \ZZ l_1 \oplus \ZZ l_2 \oplus \ZZ l_3$. It follows
from Cho's computation that if we equip $T_{Cl}$ with a local system
$\alpha_{\zeta} : H_1(T_{Cl}) \to (\ZZ/5)^\times$ such that $\alpha_{\zeta} (l_i) =
\zeta$ for some fixed $\zeta \in (\ZZ/5)^\times$ and $i=1,2,3,4$ (note that this is allowed in view
of Equation \eqref{constraint} since $\zeta^4=1$), then we get 
\[ HF^*((T_{Cl}, \zeta), (T_{Cl}, \zeta);\kk) = Cl(V, q_{\zeta}) \text{\ for \ } q_{\zeta} = \zeta q \] and
$\mathfrak{m}_0(T_{Cl}, \zeta)= 4\zeta$.

By abuse of notation, we use $\zeta$ to denote the local system $\alpha_\zeta$. In fact, it is easy to see that these are the only local systems that give non-vanishing Floer cohomology. To summarize, Cho's calculations from \cite{cho1}, \cite{cho2} put together with the split-generation Theorem \ref{generation} leads to:

\begin{cor} \label{cliff} $T_{Cl}$ when equipped with the local system $\zeta=1,2,3,4$ split-generates the summand $\mathcal{F}(\cp{3}, 4\zeta )$ over a field $\kk$ of characteristic $5$. 
\end{cor}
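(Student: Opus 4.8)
The plan is to apply Theorem \ref{generation} one eigenvalue-summand at a time, using the local systems to move the Clifford torus into each summand in turn. First I would record the computation of the disc invariant: by Cho's classification there are four families of Maslov $2$ discs with boundary classes $l_1,\dots,l_4$ satisfying $l_1+l_2+l_3+l_4=0$, so equipping $T_{Cl}$ with the local system $\alpha_\zeta$ sending each $l_i$ to a fixed $\zeta\in(\ZZ/5)^\times$ multiplies the count of each disc family by $\zeta$ and gives $\mathfrak{m}_0(T_{Cl},\zeta)=4\zeta$. As $\zeta$ ranges over $(\ZZ/5)^\times=\{1,2,3,4\}$ the value $4\zeta$ ranges over all of $\{1,2,3,4\}=\mathrm{Spec}(c_1\star)$, so the four brane structures have pairwise distinct $\mathfrak{m}_0$-values hitting every eigenvalue exactly once; consequently the brane $(T_{Cl},\zeta)$ is an object of the summand $\mathcal{F}(\cp{3},4\zeta)$.

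Next I would verify the hypothesis of Theorem \ref{generation} for each brane, namely that the closed--open map
\[\mathcal{CO}\colon QH^\bullet(\cp{3},4\zeta)\to HH^\bullet\bigl(CF^*((T_{Cl},\zeta),(T_{Cl},\zeta);\kk)\bigr)\]
is injective. Here I would invoke the rank-one observation made just above the statement: over a field of characteristic $5$ one has $QH^*(\cp{3})=\bigoplus_{i=1}^4 QH^*(\cp{3},i)$ with each summand $QH^*(\cp{3},i)\cong\kk$ one-dimensional. For a one-dimensional source, injectivity of a ring map is equivalent to nonvanishing on the unit, and the composite
\[QH^\bullet(\cp{3},4\zeta)\to HH^\bullet(CF^*)\to HF^\bullet((T_{Cl},\zeta),(T_{Cl},\zeta);\kk)\]
sends the projection of $2c_1(\cp{3})$ to $2\mathfrak{m}_0(T_{Cl},\zeta)\cdot 1=8\zeta\cdot 1$ by \cite[Lemma 3.2]{S}. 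Since $8\zeta=3\zeta\neq 0$ in $\kk$ (as $\zeta\in(\ZZ/5)^\times$ and $\mathrm{char}\,\kk=5$), the image of the generator is a nonzero multiple of the unit, so $\mathcal{CO}$ does not kill the generator of the rank-one source and is therefore injective.

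Finally, I would feed this back into Theorem \ref{generation} to conclude that $(T_{Cl},\zeta)$ split-generates $\mathcal{F}(\cp{3},4\zeta)$, and then let $\zeta$ run over $\{1,2,3,4\}$ to cover all four summands. The one point requiring care — and the step I expect to be the main obstacle — is the input that Cho's additive and multiplicative computations of $HF^*(T_{Cl},T_{Cl})$ remain valid over a field of characteristic $5$ and deform correctly under the local system to $Cl(V,q_\zeta)$ with $q_\zeta=\zeta q$; in particular one must check that the quadratic form $q$ stays nondegenerate mod $5$ so that the algebra is genuinely a nondegenerate Clifford algebra and the eigenvalue bookkeeping above is consistent. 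Granting that (as asserted in the text, where $\det q=4\not\equiv 0\bmod 5$), the argument is a direct application of the rank-one reduction of the split-generation criterion.
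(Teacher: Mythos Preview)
Your proposal is correct and follows essentially the same route as the paper: the corollary is stated there as a summary of the preceding discussion, and you have reproduced precisely that discussion---compute $\mathfrak{m}_0(T_{Cl},\zeta)=4\zeta$ from Cho's disc classification, observe each eigenspace $QH^*(\cp{3},i)$ is one-dimensional over $\kk$, and use \cite[Lemma 3.2]{S} to see that the composite $QH^\bullet\to HH^\bullet\to HF^\bullet$ is nonzero (hence $\mathcal{CO}$ is injective) because $2\mathfrak{m}_0\neq 0$. One small wording issue: you say injectivity is ``equivalent to nonvanishing on the unit'' but then test the element $2c_1$ rather than the unit; what you actually need (and implicitly use) is that the projection of $2c_1$ to the rank-one summand $QH^*(\cp{3},4\zeta)$ is itself nonzero, which holds since $c_1$ acts there as multiplication by the eigenvalue $4\zeta\neq 0$.
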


\subsection{The Chiang Lagrangian}

It follows from our computations from the previous sections that Chiang
Lagrangian $L_\Delta$ gives yet another split-generator for the Fukaya category
$\mathcal{F}(\cp{3})$. Namely, a $(\ZZ/5)^\times$-local system $\beta_\zeta :
H_1(L_\Delta) \to (\ZZ/5)^\times$ is determined by a choice of monodromy
$\zeta\in(\ZZ/5)^\times$ for the generator in $H_1(L_{\Delta};\ZZ)=\ZZ/4$. Again by
abuse of notation we will use $\zeta$ to denote the local system $\beta_\zeta$.
The resulting Floer differential $d_F$ gets weighted by $\zeta$ to the
contribution from Maslov 2 discs and $\zeta^2$ for the contribution from Maslov
4 discs so the determinant of $d_F$ becomes \[5\zeta^2\equiv 0\mod 5\] hence
the Floer cohomology is still nonzero over a field of characteristic $5$.

The $\mathfrak{m}_0(L_{\Delta})$ term also picks up a factor of $\zeta$ from the local system and hence we get $\mathfrak{m}_0(L_{\Delta},\zeta)=3\zeta\in\ZZ/5$. As $\zeta$ varies over $(\ZZ/5)^\times$, $\mathfrak{m}_0(L_{\Delta},\zeta)$ takes on all the values $1,2,3,4\in(\ZZ/5)^\times$. These are the fourth roots of unity modulo 5 and hence they are all the possible eigenvalues of $c_1(\cp{3})\star$.

\begin{cor} \label{chi} $L_{\Delta}$ when equipped with the local system $\zeta=1,2,3,4$ split-generates the summand $\mathcal{F}(\cp{3}, 3\zeta )$ over a field $\kk$ of characteristic $5$. 
\end{cor}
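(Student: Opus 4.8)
The plan is to apply the rank-one special case of the split-generation criterion (Theorem \ref{generation}) to $(L_\Delta, \zeta)$, exactly parallel to the treatment of the Clifford torus in Corollary \ref{cliff}. Concretely, I would verify three inputs: that $(L_\Delta, \zeta)$ is a genuine object of $\mathcal{F}(\cp{3}, 3\zeta)$ with non-vanishing Floer cohomology; that its disc-count invariant satisfies $\mathfrak{m}_0(L_\Delta, \zeta) = 3\zeta \neq 0$ in $\kk$; and that the target summand $QH^*(\cp{3}, 3\zeta)$ has rank one. Once these are in place the conclusion is formal.

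First I would recall from the preceding discussion that over a field of characteristic $5$ the $\zeta$-weighted Floer differential has determinant $5\zeta^2 \equiv 0$, so $HF^*((L_\Delta, \zeta),(L_\Delta,\zeta);\kk) \neq 0$ and $(L_\Delta, \zeta)$ is an object of the Fukaya category. Its disc count is $\mathfrak{m}_0(L_\Delta, \zeta) = 3\zeta$, a nonzero element of $\ZZ/5$ since $\zeta \in (\ZZ/5)^\times$, so $(L_\Delta, \zeta)$ lies in $\mathcal{F}(\cp{3}, 3\zeta)$. For the rank-one hypothesis I would invoke the fact recorded above that $Spec(c_1\star) = \{1,2,3,4\}$ with each eigenspace one-dimensional, $QH^*(\cp{3}, i) \simeq \kk$, so that $QH^*(\cp{3}, 3\zeta)$ is one-dimensional for every $\zeta$.

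With these inputs the argument is the rank-one consequence of Theorem \ref{generation} spelled out before Corollary \ref{cliff}: the composite $QH^*(\cp{3}, 3\zeta) \to HH^\bullet(CF^*) \to HF^*((L_\Delta,\zeta),(L_\Delta,\zeta);\kk)$ sends the projection of $2c_1$ to $2\mathfrak{m}_0(L_\Delta,\zeta)\cdot 1 = 6\zeta\cdot 1 \neq 0$, and because $\mathfrak{m}_0 = 3\zeta \neq 0$ this projection is a nonzero generator of the one-dimensional space $QH^*(\cp{3}, 3\zeta)$; hence $\mathcal{CO}$ is injective on this summand and Theorem \ref{generation} gives split-generation of $\mathcal{F}(\cp{3}, 3\zeta)$. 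Since $3\zeta$ runs over all of $\{1,2,3,4\}$ as $\zeta$ ranges over $(\ZZ/5)^\times$, every summand is reached. The argument is essentially formal, so there is no serious obstacle; the only point needing care is confirming that the projection of $2c_1$ to the rank-one summand is nonzero, equivalently that $2\mathfrak{m}_0 \neq 0$ in $\kk$, which is precisely where the computed value $\mathfrak{m}_0(L_\Delta, \zeta) = 3\zeta \neq 0$ is used to force injectivity of $\mathcal{CO}$.
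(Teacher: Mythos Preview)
Your proposal is correct and follows exactly the approach the paper takes: the paper establishes in the paragraphs immediately preceding the corollary that $HF^*((L_\Delta,\zeta),(L_\Delta,\zeta);\kk)\neq 0$ and $\mathfrak{m}_0(L_\Delta,\zeta)=3\zeta\neq 0$, then invokes the rank-one consequence of Theorem~\ref{generation} (spelled out just before Corollary~\ref{cliff}) using that each $QH^*(\cp{3},i)\cong\kk$. Your write-up is in fact slightly more explicit than the paper about why the projection of $2c_1$ to the rank-one summand is nonzero, but the argument is the same.
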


Corollary \ref{cliff} and \ref{chi} tell us that there is an $A_\infty$ quasi-equivalence between the categories of $A_{\infty}$-modules
\begin{equation} \label{qiso} CF^*((T_{Cl}, 2\zeta), (T_{Cl}, 2\zeta);\kk)^{mod} \ \cong CF^*( (L_{\Delta}, \zeta), (L_{\Delta}, \zeta);\kk)^{mod} \end{equation} 

Now, since $HF^*((T_{Cl}, 2\zeta), (T_{Cl}, 2\zeta);\kk) = Cl(V, q_{2\zeta})$ is a Clifford algebra with non-degenerate quadratic from $q_{2\zeta}$, it follows from the computation given in \cite{kassel} that:
\[ HH^{\bullet} (HF^*((T_{Cl}, 2\zeta), (T_{Cl}, 2\zeta);\kk)) = HH^{0} (HF^*((T_{Cl}, 2\zeta), (T_{Cl}, 2\zeta);\kk)) =  \kk \]
supported in degree $\bullet = 0$. The Hochschild cochain complex for the $A_\infty$ algebra $CF^*(L,L)$ has a filtration by length of the cochains \cite[Section 1f]{seidelbook} which leads to a spectral sequence
\[ HH^{\bullet} (HF^*(L,L;\kk)) 
\Rightarrow HH^{\bullet} (CF^*(L, L;\kk)) \]
In the case, $L =  (T_{Cl}, 2\zeta)$, this spectral sequence is necessarily trivial for degree reasons. Therefore, the quasi-equivalence (\ref{qiso}) gives us (see \cite[Equation 1.20]{flux}) that:
\[ HH^{\bullet} (CF^*( (L_{\Delta}, \zeta), (L_{\Delta}, \zeta);\kk)) = HH^{0} (CF^*( (L_{\Delta}, \zeta), (L_{\Delta}, \zeta);\kk)) = \kk \]

In view of this, the theory of deformations of $A_\infty$ algebras gives that the $A_\infty$ algebra on the cochain complex $CF^*( (L_{\Delta}, \zeta), (L_{\Delta}, \zeta);\kk)$ is formal (the obstruction classes \cite[Section 3]{seidelquartic} for trivializing the higher products vanish for grading reasons).

We conclude this discussion by deducing that the ring $HF^*( (L_{\Delta}, \zeta), (L_{\Delta}, \zeta;\kk)$ is semisimple:
\begin{thm} The $A_\infty$ algebra $CF^*( (L_{\Delta}, \zeta),
	(L_{\Delta}, \zeta);\kk)$ is quasi-isomorphic to the semisimple
	Clifford algebra $HF^*( (L_{\Delta}, \zeta), (L_{\Delta},
	\zeta);\kk) = \kk [x]/ (x^2 + c(\zeta) )$ where $c(\zeta) \neq 0$ and $x$ has degree 1.
\end{thm}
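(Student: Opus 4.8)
The plan is to combine the two facts established immediately above — formality of $CF^*((L_\Delta,\zeta),(L_\Delta,\zeta);\kk)$ and the computation $HH^\bullet(CF^*((L_\Delta,\zeta),(L_\Delta,\zeta);\kk))=HH^0=\kk$ — with the additive computation $HF^0\cong HF^1\cong\kk$ obtained above. Formality means that $CF^*$ is quasi-isomorphic, as a $\ZZ/2$-graded $A_\infty$-algebra, to its cohomology ring $HF^*:=HF^*((L_\Delta,\zeta),(L_\Delta,\zeta);\kk)$, so it suffices to identify this ring and to show it is a nondegenerate Clifford algebra. First I would pin down the ring structure purely from the grading: the unit lies in $HF^0=\kk\cdot 1$, the odd part is $HF^1=\kk\cdot x$ for some degree-one generator $x$, and the product $x\cdot x$ necessarily lands in $HF^{1+1}=HF^0=\kk\cdot 1$. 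Hence $x^2=-c(\zeta)\cdot 1$ for a unique scalar $c(\zeta)\in\kk$, and $HF^*\cong\kk[x]/(x^2+c(\zeta))$. The only genuine content left is the nonvanishing $c(\zeta)\neq 0$.

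To prove $c(\zeta)\neq 0$ I would compute the zeroth $\ZZ/2$-graded Hochschild cohomology of the formal algebra $\kk[x]/(x^2+c(\zeta))$ and compare it with the value $\kk$ transported from the Clifford torus. By formality $HH^\bullet(CF^*)=HH^\bullet(HF^*)$, and $HH^0$ of an associative algebra is its graded centre. Since $x$ is odd, $x$ lies in the graded centre precisely when it supercommutes with itself, that is when $x\cdot x=(-1)^{|x|\cdot|x|}x\cdot x=-x^2$; as $\kk$ has characteristic $5\neq 2$ this forces $x^2=0$, equivalently $c(\zeta)=0$. Consequently, if $c(\zeta)=0$ the graded centre is the whole two-dimensional algebra $\kk\cdot 1\oplus\kk\cdot x$, so $HH^0$ would have rank $2$, contradicting $HH^0=\kk$. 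Therefore $c(\zeta)\neq 0$.

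Finally I would record that $c(\zeta)\neq 0$ makes $\kk[x]/(x^2+c(\zeta))$ semisimple: over a field of characteristic $\neq 2$ its discriminant $-4c(\zeta)$ is nonzero, so the algebra is separable and splits as either $\kk\times\kk$ (when $-c(\zeta)$ is a square in $\kk$) or as a quadratic field extension of $\kk$ (otherwise); in either case it is the nondegenerate one-variable Clifford algebra. Combined with formality this yields the asserted quasi-isomorphism of $A_\infty$-algebras. The substantive point, and the one I would treat most carefully, is the graded-centre computation in the second paragraph: the whole argument hinges on the $\ZZ/2$-graded sign in the supercommutativity condition, since it is exactly this sign that separates the semisimple case (rank-one centre) from the exterior-algebra case $c(\zeta)=0$ (rank-two centre), and it is this rank that the Hochschild computation imported from the Clifford torus pins down.
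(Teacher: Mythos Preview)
Your graded-centre computation is correct: for $\kk[x]/(x^2+c(\zeta))$ with $|x|=1$ and $\mathrm{char}\,\kk\neq 2$, the supercommutation condition $x\cdot x=-x\cdot x$ puts $x$ in the graded centre precisely when $c(\zeta)=0$, so the centre jumps from rank one to rank two exactly in the degenerate case. (A minor point: $x$ is odd, so it lands in $HH^1$ rather than $HH^0$ in the paper's $\ZZ/2$-graded convention; but the total rank of $HH^\bullet$ is still $\geq 2$, which is what you need.)

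The substantive issue is the step ``by formality $HH^\bullet(CF^*)=HH^\bullet(HF^*)$''. The paper's one-sentence justification of formality just before the theorem (``obstruction classes vanish for grading reasons'') refers to obstructions that live in $HH^{\geq 3}$ of the \emph{cohomology ring} $HF^*$. Those groups are visibly zero only once one knows $HF^*$ is the nondegenerate Clifford algebra, i.e.\ only once $c(\zeta)\neq 0$ is already in hand. If $c(\zeta)=0$ the relevant Hochschild groups of $\kk[x]/(x^2)$ are large, the obstruction argument does not apply, and non-formal $A_\infty$-structures on $\kk[x]/(x^2)$ genuinely exist; for those, the length-zero class $x$ need not survive the length spectral sequence to give a nonzero class in $HH^*(CF^*)$. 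So using formality to deduce $c(\zeta)\neq 0$ is circular.

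This is exactly why the paper's own proof does \emph{not} invoke formality. It transfers the $A_\infty$-structure to $HF^*$ by homological perturbation (which is always available, with no hypotheses), assumes $c(\zeta)=0$, and then appeals to the classification of $\ZZ/2$-graded $A_\infty$-structures on $\kk[x]/(x^2)$ from \cite{Smith}: they are indexed by $W_k=x^k+O(x^{k+1})$, $k\geq 3$, and every one of them---formal or not---has Hochschild cohomology of rank $k-1\geq 2$, contradicting $HH^\bullet(CF^*)=\kk$. Your graded-centre shortcut covers only the formal case $k=\infty$; the paper's route handles all $k$ simultaneously and thereby avoids the circularity.
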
 
\begin{proof} From the additive calculation of Floer cohomology
	\[ HF^*( (L_{\Delta}, \zeta), (L_{\Delta}, \zeta);\kk) = \kk
		\oplus \kk [1] \] we know that as a ring we have: \[ HF^*( (L_{\Delta},
		\zeta), (L_{\Delta}, \zeta);\kk) = \kk[x]/ (x^2+
	c(\zeta)) \] for some $c(\zeta) \in \kk$. The claimed result is to
	prove that $c(\zeta) \neq 0$. Suppose that $c(\zeta)=0$, then the Floer
	cohomology would be isomorphic to an exterior algebra $\kk[x]/(x^2)$.
	The $A_\infty$ algebra $CF^*( (L_{\Delta}, \zeta),
	(L_{\Delta}, \zeta);\kk)$ would then be equivalent (by homological
	perturbation \cite{kadeishvili}) to an $A_\infty$ structure on
	$\kk[x]/(x^2)$. The classification of such $A_\infty$ structures
	follows easily from deformation theory. It is explained in Example 3.20
	\cite{Smith} that they are given by a formal function \[W_k(x)= x^k +
	O(x^{k+1}) \text{ for } k \geq 3 \]and for the $A_\infty$ algebra
	$\mathcal{A}_k = (\kk[x]/(x^2), W_k)$, one has that
	$HH^*(\mathcal{A}_k, \mathcal{A}_k)$ has rank $k-1$, which is strictly
	greater than 1. On the other hand, we have seen above that
	$HH^{\bullet} (CF^*( (L_{\Delta}, \zeta), (L_{\Delta},
	\zeta);\kk))$ has rank 1. It follows then that $c(\zeta) \neq 0$ as
	required.  \end{proof}

It turns out that  $c(\zeta) = -\zeta^3$. This will be proved in the next section.

\section{Clifford module structure}\label{sct:cliff-mod}

In this section we will compute the Lagrangian intersection Floer cohomology of the Clifford torus $T_{Cl}$ with the Chiang Lagrangian $L_{\Delta}$. Note that for Floer cohomology to be defined the two Lagrangians must be equipped with local systems to give them the same $\mathfrak{m}_0$-value. In our earlier notation, we will fix a unit $\zeta\in(\ZZ/5)^\times$ and compute
\[HF^*((T_{Cl},2\zeta),(L_{\Delta},\zeta);\kk)\]
This is a $\ZZ/2$-graded module over the Clifford algebra $$HF^*((T_{Cl}, 2\zeta),(T_{Cl} , 2\zeta);\kk)=Cl^*(V,q_{2\zeta})$$ We will begin by recalling some basic facts on representations of Clifford algebras and we will finally deduce that the above module associated with $(L_{\Delta},\zeta)$ is quasi-isomorphic to the spin representation of the Clifford algebra $Cl^*(V,q_{2\zeta})$. 

\subsection{Preliminaries on representations of Clifford algebras}

Consider the Clifford algebra $Cl^*(V,q_{2\zeta})$ as a $\ZZ/2$-graded algebra. By \cite[Proposition 5.1]{ABS}, irreducible $\ZZ/2$-graded $Cl^*(V,q_{2\zeta})$-modules are in one-to-one correspondence with irreducible ungraded $Cl^0(V,q_{2\zeta})$-modules. This correspondence sends a graded module $M^*=M^0\oplus M^1$ to its even part $M^0$; the graded module is recovered as $Cl^*(V,q_{2\zeta})\otimes_{Cl^0}M^0$.

We can identify $Cl^0(V,q_{2\zeta})$ with a Clifford algebra on a
two-dimensional vector space by \cite[II.2.6]{Chev} and deduce by
\cite[II.2.1]{Chev} that $Cl^0(V,q_{2\zeta})$ is a central simple algebra. By the Artin-Wedderburn theorem this algebra is isomorphic to a two-by-two matrix algebra $M_2(\kk)$. In
particular, any module splits as a direct sum of simple modules and there is a
unique simple module, of rank two, which we call the spin representation $S$.
We write $S_{2\zeta}^*=Cl^*(V,q_{2\zeta})\otimes_{Cl^0}S$ for the unique simple
$\ZZ/2$-graded $Cl^*(V,q_{2\zeta})$-module, which has rank 4.

By \cite[II.2.6]{Chev} the centre of $Cl^*(V,q_{2\zeta})$ is two-dimensional and contains an odd element $z$ whose square is $-2D_{2\zeta}$ where $D_{2\zeta}=2\zeta^3$ is the discriminant of the quadratic form $q_{2\zeta}$. This central element spans the degree one module homomorphisms $S^*_{2\zeta}\to S_{2\zeta}^*[1]$. Indeed, we have:
$$ \OP{Hom}_{mod\text{-}Cl^*(V,q_{2\zeta})}(S_{2\zeta}^*,S_{2\zeta}^*)=\kk\oplus\kk z$$

In this ring, one has $z^2= \zeta^3$. We will momentarily show that $S_{2\zeta}^*$ and $(L_{\Delta},\zeta)$ are quasi-isomorphic in the Fukaya category, from which we will be able to deduce that $c({\zeta})= -\zeta^3 $.

\subsection{Computing the Floer cohomology}

\begin{lma} \label{circles} The Lagrangians $T_{Cl}$ and $L_{\Delta}$ intersect along a pair of
	circles.  \end{lma}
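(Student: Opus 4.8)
The claim is that the Clifford torus $T_{Cl}$ and the Chiang Lagrangian $L_{\Delta}$ intersect along a pair of circles. The plan is to describe both Lagrangians explicitly in homogeneous coordinates on $\cp{3}=\PP(\Sym^3 V)$ and solve for their intersection directly. Recall that in the coordinates $[u_0:u_1:u_2:u_3]$ (with $u_k=\binom{3}{k}^{-1/2}v_k$ the unitary coordinates from Section \ref{moment}), the Clifford torus is the locus $|u_0|=|u_1|=|u_2|=|u_3|$. The Chiang Lagrangian is the $SU(2)$-orbit of the configuration $\Delta$; equivalently it is $\mu_3^{-1}(0)$ for the moment map $\mu_3$ written down in Section \ref{moment}. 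So the intersection is cut out by imposing the three equal-modulus conditions simultaneously with the vanishing of $\mu_3$.

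**The key steps.**

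First I would write out the vanishing of the moment map $\mu_3$ in coordinates. The diagonal entry of $\mu_3$ gives $\sum_{k=0}^3(3-2k)|u_k|^2=0$, i.e. $3|u_0|^2+|u_1|^2-|u_2|^2-3|u_3|^2=0$, and the off-diagonal entry gives $\sum_{k=0}^2\sqrt{(k+1)(3-k)}\,\overline{u}_k u_{k+1}=0$. On the Clifford torus all $|u_k|$ are equal, so the diagonal condition is automatically satisfied; thus on $T_{Cl}\cap L_{\Delta}$ only the single complex (two real) equation $\sqrt{3}\,\overline{u}_0u_1+2\overline{u}_1u_2+\sqrt{3}\,\overline{u}_2u_3=0$ remains to be imposed. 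Writing $u_k=e^{i\phi_k}$ up to the overall scaling and phase, this becomes one complex equation in the three phase-differences, which I expect to cut the three-torus $\{|u_0|=\cdots=|u_3|\}/S^1$ (a real two-torus after projectivising) down to a one-dimensional locus — generically a union of circles. The second step is to solve this equation explicitly and count components: I would parametrise by phase differences $\theta_1=\phi_1-\phi_0$, etc., reduce to a trigonometric system, and exhibit exactly two circles' worth of solutions.

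**The main obstacle.**

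The hard part will be organising the explicit solution of the single complex moment-map equation on the torus so that the component count is rigorous rather than merely plausible; one must be careful that the naive solution set could in principle have isolated singular points or fail to be smooth where the circles might meet. I would control this by using the residual symmetry: both $T_{Cl}$ and $L_{\Delta}$ are preserved by suitable subgroups (the $\ZZ/3$-action on $L_{\Delta}$ from Section \ref{sect:topol}, and the torus reparametrisations fixing $T_{Cl}$), and I expect the two circles to be exchanged or individually preserved by this symmetry, which pins down the count. An alternative, cleaner route — which may in fact be what the authors intend — is to use the remark already made in the introduction that ``$L_{\Delta}$ and $\RR\PP^3$ intersect along a pair of circles in their standard positions'': since the Clifford torus and $\RR\PP^3$ are interchanged by a suitable isometry of $\cp{3}$, or since both the relevant loci can be described via the real points, one may be able to transport a known $L_{\Delta}\cap\RR\PP^3$ computation. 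I would first attempt the direct coordinate computation and fall back on the symmetry/real-locus description only to verify the component count of exactly two.
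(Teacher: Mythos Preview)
Your proposal is correct and follows essentially the same route as the paper: write $L_\Delta=\mu_3^{-1}(0)$, observe that the diagonal moment-map condition is automatic on $T_{Cl}$, and solve the remaining complex equation $\sqrt{3}\,u_0\bar u_1+2u_1\bar u_2+\sqrt{3}\,u_2\bar u_3=0$ in phase-difference coordinates to obtain exactly two circles. Your caution about the component count is unnecessary---the trigonometric system solves cleanly (after using the diagonal-shift symmetry to set one phase to zero, one finds $\theta_1=-\theta_3$ with $\cos\theta_1=-1/\sqrt{3}$, giving two solutions, and the free diagonal phase parametrises each circle); also note that $T_{Cl}\subset\cp{3}$ is a $3$-torus, not a $2$-torus as your parenthetical suggests.
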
 \begin{proof} We work in coordinates
	$[u_0:\cdots:u_3]$ on $\PP(\OP{Sym}^3V)$ where $V$ is the standard
	representation of $SU(2)$. Recall from Section \ref{moment} that $L_\Delta$
	is defined by the following equations: 
\[ 3|u_0|^2+|u_1|^2-|u_2|^2-3|u_3|^2 = 0 \]
\[ \sqrt{3}u_0\bar{u}_1+2u_1\bar{u}_2+\sqrt{3}u_2\bar{u}_3 = 0 \]
Recall that the Clifford torus is given by
\[ |u_0| = |u_1| = |u_2| = |u_3| .\]
In the chart $u_0=1$, the Clifford torus consists of points
\[(u_0,\ldots,u_3)=(1,e^{-i\theta_1},e^{-i(\theta_1+\theta_2)},e^{-i(\theta_2+\theta_3)})\]
The intersection with $L_{\Delta}$ is the set of points for which
\[\sqrt{3}e^{i\theta_1}+2e^{i\theta_2}+\sqrt{3}e^{i\theta_3}=0.\]
We can rotate so that $\theta_2=0$; then $\theta_1=-\theta_3=\pm\cos^{-1}(1/\sqrt{3})$. Therefore the intersection consists of the two circles \[ (\theta_1,\theta_2,\theta_3)=(\pm\cos^{-1}(1/\sqrt{3})+\phi,\phi,\mp\cos^{-1}(1/\sqrt{3})+\phi)). \]
\end{proof}

\begin{cor} \label{pair}
We have
\[HF^*((T_{Cl}, 2\zeta ),(L_\Delta, \zeta);\kk)\cong S_{2\zeta}^*\]
as $Cl(V,q_{2\zeta})$-modules.
\end{cor}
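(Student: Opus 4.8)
The plan is to determine the integer $k\geq 0$ for which $HF^*((T_{Cl},2\zeta),(L_\Delta,\zeta);\kk)\cong (S_{2\zeta}^*)^{\oplus k}$ and to show $k=1$. First I would record that this Floer cohomology is automatically a direct sum of copies of the spin representation. As noted in the preliminaries, $Cl^0(V,q_{2\zeta})\cong M_2(\kk)$ is a matrix algebra, hence semisimple with a unique simple module; under the Atiyah--Bott--Shapiro correspondence \cite[Proposition 5.1]{ABS} every $\ZZ/2$-graded $Cl^*(V,q_{2\zeta})$-module is therefore a direct sum of copies of the unique simple graded module $S_{2\zeta}^*$, which has rank $4$. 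Thus the claim is equivalent to the single numerical assertion $\dim_\kk HF^*((T_{Cl},2\zeta),(L_\Delta,\zeta);\kk)=4$, which I would establish by a matching upper and lower bound.

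For the upper bound I would use Lemma \ref{circles}: the two Lagrangians meet cleanly along two circles (cleanness is read off from the explicit equations in that proof). The clean-intersection (Morse--Bott) spectral sequence then has first page $H^*(T_{Cl}\cap L_\Delta;\mathcal{L})$, where $\mathcal{L}$ is the rank-one local system built from the two ambient $(\ZZ/5)^\times$-local systems (twisted by the index/orientation data), and it converges to $HF^*$. Since a rank-one local system on $S^1$ has total cohomology of dimension $2$ when it is trivial and $0$ otherwise, each circle contributes at most $2$, so
\[\dim_\kk HF^*((T_{Cl},2\zeta),(L_\Delta,\zeta);\kk)\leq \dim_\kk H^*(S^1\sqcup S^1;\mathcal{L})\leq 4.\]
Hence $4k\leq 4$, giving $k\leq 1$.

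For the lower bound I would invoke split-generation. Both objects lie in the same summand of the Fukaya category: $\mathfrak{m}_0(T_{Cl},2\zeta)=4\cdot 2\zeta=8\zeta\equiv 3\zeta$ and $\mathfrak{m}_0(L_\Delta,\zeta)=3\zeta$ in $\kk$, so both belong to $\mathcal{F}(\cp{3},3\zeta)$. By Corollary \ref{chi}, $(L_\Delta,\zeta)$ split-generates $\mathcal{F}(\cp{3},3\zeta)$. The objects $X$ with $HF^*(X,(T_{Cl},2\zeta))=0$ form a thick (triangulated, split-closed) subcategory; if it contained a split-generator it would contain every object, in particular $(T_{Cl},2\zeta)$ itself, forcing $HF^*((T_{Cl},2\zeta),(T_{Cl},2\zeta))=0$. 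But this self-Floer cohomology is the nonzero Clifford algebra $Cl(V,q_{2\zeta})$, a contradiction. Hence $HF^*((L_\Delta,\zeta),(T_{Cl},2\zeta))\neq 0$, and by Floer-theoretic Poincar\'e duality $HF^*((T_{Cl},2\zeta),(L_\Delta,\zeta))\cong HF^{1-*}((L_\Delta,\zeta),(T_{Cl},2\zeta))^\vee$ is nonzero as well. Therefore $k\geq 1$, and combined with the upper bound this yields $k=1$, i.e. $HF^*((T_{Cl},2\zeta),(L_\Delta,\zeta);\kk)\cong S_{2\zeta}^*$.

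I expect the main obstacle to be the upper bound: one must verify that the intersection is genuinely clean, so that the Morse--Bott model applies and no generators appear beyond $H^*$ of the intersection locus, and one must track carefully how the two local systems and the index/orientation data assemble into the coefficient system $\mathcal{L}$ on the circles. The nonvanishing step is by contrast essentially formal once Corollaries \ref{cliff} and \ref{chi} are in hand, together with the duality $HF^*(A,B)\cong HF^{n-*}(B,A)^\vee$ with $n=3$.
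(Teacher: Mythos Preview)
Your argument is correct and follows the same outline as the paper: show the Floer group is nonzero by split-generation, bound its dimension by $4$ via Lemma \ref{circles}, and conclude using the classification of graded $Cl^*(V,q_{2\zeta})$-modules.

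Two minor differences are worth noting. For the nonvanishing step the paper invokes split-generation by $(T_{Cl},2\zeta)$ (Corollary \ref{cliff}) rather than by $(L_\Delta,\zeta)$; either works, and your formulation via the thick subcategory of objects orthogonal to $(T_{Cl},2\zeta)$ is fine. For the upper bound the paper avoids the Morse--Bott model entirely: after observing that the intersection is a pair of circles, one simply perturbs $L_\Delta$ by a Hamiltonian built from a perfect Morse function on each circle, producing four transverse intersection points and hence a four-dimensional Floer cochain group. This sidesteps the issues you flag---checking cleanness and tracking the induced local system $\mathcal{L}$ on the circles---so you may prefer to replace your Morse--Bott argument with this more elementary perturbation.
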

\begin{proof}
Since $(T_{Cl},2\zeta)$ generates the summand of the Fukaya category containing
$(L_\Delta,\zeta)$ over the field $\kk$, this Floer cohomology group must be
non-zero. The corollary will follow from the classification of $\ZZ/2$-graded
$Cl^*(V,q_{2\zeta})$-modules if we can show that the rank of the Floer
cohomology is at most four-dimensional.

By Lemma \ref{circles}, the Clifford torus and the Chiang Lagrangian intersect along a pair of circles. After a small perturbation, using a perfect Morse function on each circle, they can be made to intersect at four points. This implies that the Floer cohomology is at most four-dimensional.
\end{proof}

\section{Generating the Fukaya category}\label{sct:gen-fuk}

We have seen above that $(T_{Cl},2\zeta)$ is a split-generator for the summand $\mathcal{F}(\cp{3}, 3 \zeta)$ of the Fukaya category of $\cp{3}$ and the $A_\infty$-structure on $HF^*((T_{Cl},2\zeta), (T_{Cl},2\zeta))$ is formal. This means that there is a quasi-equivalence between the derived categories: 
\begin{equation}\label{equivalence} D^b(mod\text{-}Cl^*(V, q_{2\zeta})) \simeq D^\pi (\mathcal{F}(\cp{3},3 \zeta)) \end{equation}

where the left hand side denotes bounded derived category of finitely generated modules over $Cl^*(V,q_{2\zeta})$ and the right hand side denotes the split-closure of a triangulated envelope of the summand of the Fukaya category $\mathcal{F}(\cp{3},3\zeta)$. 
This quasi-equivalence is a consequence of \cite[Corollary 4.9]{seidelbook} and the fact that the triangulated category $D^b(mod\text{-}Cl^*(V,q_{2\zeta}))$ is split-closed (\cite[Corollary 2.10]{balmerSchlichting}).

On the other hand, as we have seen in the previous section $Cl^*(V,q_{2\zeta})$
is a semisimple ring. In fact, $Cl^*(V, q_{2\zeta}) = S^*_{2\zeta} \oplus
S^*_{2\zeta}$, where $S^*_{2\zeta}$ is the unique simple module and any other
finitely generated module is isomorphic to a direct sum of finitely many
copies of $S^*_{2\zeta}$. In particular, $S^*_{2\zeta}$ is a generator of the
triangulated category $D^b(mod\text{-}Cl^*(V,q_{2\zeta}))$.

Now, by definition, there is a cohomologically full and faithful embedding of $\mathcal{F}(\cp{3}, 3\zeta))$ to $D^\pi(\mathcal{F}(\cp{3},3\zeta))$. Therefore, 
$(L_\Delta, \zeta)$ can be seen as an object of
$D^\pi(\mathcal{F}(\cp{3},3\zeta))$. On the other hand, we have seen in
Corollary \ref{pair} that the Floer cohomology $HF^*((T_{Cl},2\zeta ), (L_\Delta, \zeta))$ has rank
$4$. Therefore, under the above equivalence, $(L_\Delta,\zeta)$ should go to an
object of $D^b(mod\text{-}Cl^*(V,q_{2\zeta}))$ which has rank 4 as a
$Cl^*(V_,q_{2\zeta})$-module. There is a unique such module, namely
$S^*_{2\zeta}$. Therefore, we have obtained:

\begin{cor}\label{cor:summand} Under the quasi-equivalence \eqref{equivalence}, $(L_{\Delta},\zeta)$ is sent to an object quasi-isomorphic to $S^*_{2\zeta}$. In particular, \[ HF^*((L_{\Delta},\zeta), (L_{\Delta},\zeta)) \simeq Ext^*(S^*_{2\zeta}, S^*_{2\zeta}) \simeq \kk[x]/ (x^2- \zeta^3) \] 
	and $(L_{\Delta},\zeta)$ generates $D^\pi(\mathcal{F}(\cp{3},3\zeta))$ as a triangulated category.
\end{cor}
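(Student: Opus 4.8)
The plan is to push the whole statement through the derived Morita picture attached to the formal endomorphism algebra $Cl^*(V, q_{2\zeta})$, reading off each assertion from the equivalence \eqref{equivalence}. That equivalence is implemented by the Yoneda-type $A_\infty$-functor associated to the split-generator $(T_{Cl}, 2\zeta)$, which sends an object $A$ of $\mathcal{F}(\cp{3}, 3\zeta)$ to the right module $CF^*((T_{Cl}, 2\zeta), A)$ over the endomorphism algebra $CF^*((T_{Cl}, 2\zeta),(T_{Cl}, 2\zeta))$; formality of this algebra (established in the previous section) is precisely what upgrades the target to $D^b(mod\text{-}Cl^*(V, q_{2\zeta}))$.

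First I would evaluate this functor on $(L_\Delta, \zeta)$. The image has cohomology $HF^*((T_{Cl}, 2\zeta),(L_\Delta, \zeta))$, which Corollary \ref{pair} identifies with $S^*_{2\zeta}$ as a $Cl^*(V, q_{2\zeta})$-module. Because $Cl^*(V, q_{2\zeta})$ is semisimple, every object of $D^b(mod\text{-}Cl^*(V, q_{2\zeta}))$ is intrinsically formal, i.e.\ quasi-isomorphic to its cohomology; hence the image of $(L_\Delta, \zeta)$ is quasi-isomorphic to $S^*_{2\zeta}$, which is the first assertion.

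The ring statement then follows from cohomological full faithfulness of \eqref{equivalence}, giving a $\ZZ/2$-graded ring isomorphism $HF^*((L_\Delta, \zeta),(L_\Delta, \zeta)) \cong \OP{Ext}^*_{Cl^*(V, q_{2\zeta})}(S^*_{2\zeta}, S^*_{2\zeta})$. Semisimplicity kills the higher Ext groups, so this reduces to the graded endomorphism ring $\OP{Hom}^*_{Cl^*(V, q_{2\zeta})}(S^*_{2\zeta}, S^*_{2\zeta})$ computed in Section \ref{sct:cliff-mod}, namely $\kk \oplus \kk z$ with $z$ odd and $z^2 = \zeta^3$; this is $\kk[x]/(x^2 - \zeta^3)$, which in particular pins down the constant $c(\zeta) = -\zeta^3$ left open earlier. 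For the generation statement I would use that $Cl^*(V, q_{2\zeta}) \cong S^*_{2\zeta} \oplus S^*_{2\zeta}$ as a module over itself, so the free rank-one module --- which tautologically generates $D^b(mod\text{-}Cl^*(V, q_{2\zeta}))$ --- is built from $S^*_{2\zeta}$; hence $S^*_{2\zeta}$ already generates (not merely split-generates), and transporting along \eqref{equivalence} shows that $(L_\Delta, \zeta)$ generates $D^\pi(\mathcal{F}(\cp{3}, 3\zeta))$ as a triangulated category.

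The step I expect to be the main obstacle is the identification of the image \emph{up to quasi-isomorphism} rather than merely on cohomology: a priori the functor outputs an $A_\infty$-module that could carry nontrivial higher module maps. The resolution is the intrinsic formality of modules over a semisimple algebra, so the only genuine input required is the cohomology-level module structure, which is exactly the content of Corollary \ref{pair}. A secondary point demanding care is bookkeeping the discriminant normalisations of Section \ref{sct:cliff-mod}, so that the relation $z^2 = \zeta^3$ --- rather than some multiple of it --- is matched correctly against the form $x^2 + c(\zeta)$ obtained in the previous section.
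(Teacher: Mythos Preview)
Your proposal is correct and follows essentially the same route as the paper: both arguments push $(L_\Delta,\zeta)$ through the equivalence \eqref{equivalence}, use Corollary~\ref{pair} to identify its image cohomologically, and invoke semisimplicity of $Cl^*(V,q_{2\zeta})$ to conclude that the image is (quasi-isomorphic to) $S^*_{2\zeta}$; the ring and generation statements then follow from the description of $\OP{Hom}^*(S^*_{2\zeta},S^*_{2\zeta})$ and the decomposition $Cl^*(V,q_{2\zeta})\cong S^*_{2\zeta}\oplus S^*_{2\zeta}$ exactly as you indicate. Your phrasing via intrinsic formality of $A_\infty$-modules over a semisimple algebra is in fact a slightly cleaner justification than the paper's ``unique module of rank 4'' sentence, but the content is the same.
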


\bibliography{chiang}{}
\bibliographystyle{plain}

\end{document}